\newcommand{\dom}{\mathop{\boldsymbol d}}
\newcommand{\ran}{\mathop{\boldsymbol r}}
\newcommand{\inv}{^{-1}}
\newcommand{\ov}[1]{\ensuremath{\overline {#1}}}
\newcommand{\til}[1]{\ensuremath{\widetilde {#1}}}
\newcommand{\wh}{\widehat}
\newcommand{\Hom}{\mathop{\mathrm{Hom}}\nolimits}
\newtheorem{Thm}{Theorem}[section]
\newtheorem{Prop}[Thm]{Proposition}
\newtheorem{Lemma}[Thm]{Lemma}
{\theoremstyle{definition}
}
{\theoremstyle{remark}
\newtheorem{Rmk}[Thm]{Remark}}
\newtheorem{Cor}[Thm]{Corollary}
{\theoremstyle{remark}
}
{\theoremstyle{remark}
}
\theoremstyle{remark}
\theoremstyle{remark}
\theoremstyle{remark}
\theoremstyle{remark}
\newtheorem*{Claim*}{Claim}}
\numberwithin{equation}{section}
\title{Factoring the Dedekind-Frobenius determinant of a semigroup}
\author{Benjamin Steinberg}
\address[B.~Steinberg]{%
    Department of Mathematics\\
    City College of New York\\
    Convent Avenue at 138th Street\\
    New York, New York 10031\\
    USA}
\email{bsteinberg@ccny.cuny.edu}
\thanks{The author was supported by a PSC CUNY grant.}
\date{\today}
\keywords{Frobenius algebra, semigroup determinant, paratrophic determinant, semigroup algebra}
\subjclass[2010]{20M25, 16D50, 16L60, 16S36}
\begin{document}

\begin{abstract}
The representation theory of finite groups began with Frobenius's factorization of Dedekind's group determinant.  In this paper, we consider the case of the semigroup determinant.  The semigroup determinant is nonzero if and only if the complex semigroup algebra is Frobenius, and so our results include applications to the study of Frobenius semigroup algebras.   We explicitly factor the semigroup determinant for  commutative semigroups and inverse semigroups.  We recover the Wilf-Lindstr\"om factorization of the semigroup determinant of a meet semilattice and Wood's factorization for a finite commutative chain ring.  The former was motivated by combinatorics and the latter by coding theory over finite rings.  We prove that the algebra of the multiplicative semigroup of a finite Frobenius ring is Frobenius over any field whose characteristic doesn't divide that of the ring.  As a consequence we obtain an easier proof of Kov\'acs's theorem that the algebra of the monoid of matrices over a finite field is a direct product of matrix algebras over group algebras of general linear groups (outside of the characteristic of the finite field).
\end{abstract}

\maketitle

\section{Introduction}
Dedekind in the 1880s introduced the group determinant of a finite group $G$:  you build a $G\times G$ matrix whose $(g,h)$-entry is $x_{gh}$ (where the $x_k$, $k\in G$, are variables) and you compute the determinant.  This is a homogeneous polynomial of degree $|G|$ and Dedekind was initially interested in this determinant as a means to compute the discriminant of a finite Galois extension of the field of rational numbers.  Dedekind was able to factor the group determinant of an abelian group into distinct linear factors but was unable to factor the determinant in the case of a nonabelian group.  He proposed the problem to Frobenius, who famously invented the character theory of nonabelian groups in order to solve it.  See~\cite{ConradRep} for a wonderful introduction to this subject and its history.

At roughly the same time, Smith~\cite{smith} computed the determinant of quite a different multiplication table: the multiplication table of $\{1,\ldots, n\}$ under the associative and commutative binary operation of $\gcd$ (greatest common divisor).  Unlike the case of Dedekind, he literally viewed the multiplication table as a matrix of numbers (rather than of variables) and took the determinant.  Of course, it is unlikely that he viewed this matrix as a multiplication table at the time.  Smith computed this determinant as $\phi(1)\phi(2)\cdots \phi(n)$ where $\phi$ is Euler's totient function.  The set $\{1,\ldots,n\}$ does not have an identity element with respect to $\gcd$ for $n\geq 2$, and so this is a \textit{bona fide} semigroup determinant.  In fact, one can immediately generalize Dedekind's group determinant to semigroups and Smith's determinant is obtained by specializing the variable $x_i$ to the number $i$.

The greatest common divisor as a binary operation on $\{1,\ldots, n\}$ is a special case of taking a meet semilattice (partially ordered set with binary greatest lower bounds or meets) and making it a semigroup via the meet operation.  The semigroup determinant of a meet semilattice was computed independently by Wilf~\cite{Wilf} and Lindstr\"om~\cite{latticedet2}.  In fact, Wilf assumed he was working with a lattice, but it doesn't matter for the computation.  Both authors showed that the semigroup determinant in this case factors into distinct linear polynomials that can be computed from the M\"obius function of the semilattice.  In the case of the $\gcd$-semilattice, this is essentially the number theoretic M\"obius function and a simple M\"obius inversion argument converts the Wilf-Lindstr\"om determinant into the Smith determinant.  Wilf was motivated in~\cite{Wilf} by computing determinants of certain combinatorially defined matrices which could be viewed as specializations of semigroup determinants of lattices.

Wood factored the semigroup determinant of a finite commutative chain ring in~\cite{woodsemigroup}.  These are finite commutative rings in which the ideals form a chain or, equivalently, are commutative local rings with a principal maximal ideal.  Wood computed the determinant here of the multiplicative semigroup and showed that it factored into linear factors, but there are multiplicities.  His motivation was a program to generalize the MacWilliams extension theorem for codes over a finite field to chain rings.  He told me (private communication) that one of the key issues was showing that the semigroup determinant does not vanish in this case.

It should be mentioned that Johnson has considered determinants of Latin squares~\cite{LatinSquare} (or quasigroups), but here we are only interested in the associative case.

The first thing we investigate in this paper is the question of when the semigroup determinant vanishes.  Unlike the group determinant, it is often the case that a semigroup determinant does vanish and the key observation is that the semigroup determinant of $S$ is nonzero if and only if $\mathbb CS$ is a Frobenius algebra (and, in particular, unital).  This is appears in a slightly different language in~\cite[Chapter~16, Proposition~18]{oknisemigroupalgebra}. 

It is often the case that a semigroup algebra is isomorphic to the algebra of a nicer structure, like a structured finite category, or to a direct product of semigroup algebras of simpler semigroups~\cite{mobius2,Stein2}.  This happens precisely because idempotents in the semigroup can be complemented in the semigroup algebra.  So our main approach is to work with Frobenius's paratrophic determinant~\cite{Frobeniushimself}, which is the natural generalization of the semigroup determinant to finite dimensional algebras with a distinguished basis, and then understand how this determinant is affected by a change of basis and how it behaves under direct product.  This allows us to exploit our isomorphisms of semigroup algebras with seemingly better-behaved algebras to compute the determinant.

Our most significant contribution is a factorization of the semigroup determinant of a commutative semigroup.  We prove that either the semigroup determinant is identically zero, or it factors into linear polynomials and we describe the factors and their multiplicities explicitly.  A number of the ideas used in our factorization result go back to work of Ponizovski\u{\i}~\cite{PoniFrob} and Wenger~\cite{Wenger} on commutative semigroups with Frobenius semigroup algebras.  But their results are not explicit enough to write down the exact factorization of the semigroup determinant, and so we need to refine and strengthen their results.  In particular, we explicitly realize the decomposition of the semigroup algebra  of a commutative semigroup (when it is unital) into a product of local rings by identifying the local rings as twisted contracted monoid algebras of commutative nilpotent semigroups with adjoined identities.  We give an example of a  commutative semigroup so that one of the twisted algebras is non-Frobenius even though the untwisted version of the algebra is Frobenius, and so the twist plays an important role.  Wenger was only able to deal with the case that all these algebras were untwisted in~\cite{Wenger}.

In coding theory, it is now known that finite Frobenius rings (not necessarily commutative) are exactly the ones for which the MacWilliams extension theorem holds for the Hamming weight.  We show here that if $R$ is a finite Frobenius ring and $K$ is a field whose characteristic does not divide that of $R$, then $KR$ is a Frobenius algebra (where we take the semigroup algebra with respect to the multiplicative monoid of $R$).  In particular, the semigroup determinant of $R$ does not vanish.  Since the ring of matrices over a finite field is a finite Frobenius ring, this allows us to obtain a new, and much easier, proof of Kov\'acs's theorem~\cite{Kovacs} that the algebra $KM_n(F)$ is isomorphic to a direct product of matrix algebras over group algebras of general linear groups if $F$ is a finite field of characteristic different from that of $K$ (this result was first proved in characteristic zero in~\cite{putchasemisimple}).

We also prove that the semigroup determinant of an inverse semigroup can be computed as the semigroup determinant of a finite groupoid, the latter of which is amenable to the same techniques as for groups.  Inverse semigroups are a class of semigroups generalizing both groups and meet semilattices.  They are precisely semigroups of partial permutations of a set closed under inversion.  The classical example is the rook monoid~\cite{Solomonrook} of all $0/1$-matrices with no two ones in the same row or column.

The paper is roughly organized as follows.  We being with a treatment of Frobenius's paratrophic determinant and how it relates to the modern definition of Frobenius algebra (going back to Brauer, Nesbitt and Nakayama); in particular, we observe that the semigroup determinant vanishes precisely when the semigroup algebra is not Frobenius, allowing us to give some simple necessary conditions for the semigroup determinant not to vanish.  Included are some basic computational results that were probably known to Frobenius.  The main result here is the change of basis formula, which then allows us to deduce quickly Frobenius's classical theorem for group determinants and the Wilf-Lindstr\"om result for semilattices, the latter from Solomon's description of the semigroup algebra of a semilattice~\cite{Burnsidealgebra}.  The following section shows how to use the author's isomorphism theorem for inverse semigroup algebras~\cite{mobius2} to compute the semigroup determinant of an inverse semigroup as the determinant of its associated groupoid after a change of variables involving the M\"obius function of the semigroup.  Section~\ref{s:Frob.ring} shows that the semigroup algebra of the multiplicative semigroup of a finite Frobenius ring is a Frobenius algebra outside of characteristics dividing the characteristic of the finite ring. This section also includes our new proof of Kov\'acs's theorem on the algebra $KM_n(F)$.  The following section is devoted to the determinant of a monoid obtained by adjoining an identity to a nilpotent semigroup.  It is widely believed that an overwhelming majority of monoids of order $n$ are obtained in this way.  For monoids obtained by adjoining an identity to a nilpotent semigroup, we compute the semigroup determinant explicitly, refining a result of Wenger~\cite{Wenger}.  The problem of whether such a monoid has a Frobenius algebra is equivalent to the problem of determining whether a $0/1$-matrix is singular.  The final section deals with the case of commutative semigroups.   We first show that if $S$ has central idempotents and $S^2=S$, then $KS$ is unital for any field and give a direct product decomposition into algebras of monoids whose non-invertible elements are nilpotent.  This applies, in particular, to the commutative case, where the decomposition was first shown by Ponizovski\u{\i}~\cite{PoniFrob}, but without a concrete isomorphism such as we provide.  For a commutative monoid in which all non-invertible elements are nilpotent, we explicitly compute the direct product decomposition of the monoid algebra into a direct product of local rings and we show these rings are twisted monoid algebras of nilpotent semigroups with adjoined identities.  We then use the results of the previous section (which work also in the twisted setting) to compute the determinant of a commutative semigroup.  As a corollary we deduce Wood's factorization for finite chain rings.

\section{The paratrophic determinant of a based algebra}
In this paper all algebras are associative and over $\mathbb C$ unless otherwise stated.  We do not assume the existence of identities in algebras.  Much of what we say can be made to work over arbitrary fields with the proviso that group algebras become more complicated in the modular setting and that nonzero polynomials can vanish identically over finite fields, but we stick to the complex numbers for simplicity and because of our initial motivations.  The reader is referred to~\cite{serrerep} for the character theory of finite  groups and to~\cite{benson} for finite dimensional algebras.

\subsection{General notions}
By a \emph{based algebra} we mean a finite dimensional $\mathbb C$-algebra $A$ with a distinguished basis $B$.   We often write $(A,B)$ for the pair.   The multiplication on $A$ is determined by its \emph{structure constants} with respect to $B$, defined by the equations
\[bb' = \sum_{b''\in B}c_{b'',b,b'}b''\] where $b,b'\in B$ and $c_{b'',b,b'}\in \mathbb C$.  Let $X_B=\{x_b\mid b\in B\}$ be a set of variables in bijection with $B$.  By the \emph{Cayley table} of $(A,B)$, we mean the $B\times B$ matrix over $\mathbb C[X_B]$ with
\[C(A,B)_{b,b'} = \sum_{b''\in B}c_{b'',b,b'}x_{b''}.\] Matrices obtained by specializing $C(A,B)$ at elements of $\mathbb C^B$ are called \emph{paratrophic matrices} (by Frobenius~\cite{Frobeniushimself}), and so $C(A,B)$ is the generic paratrophic matrix.  We define
\[\theta_{(A,B)}(X_B) = \det C(A,B).\]  It is either identically zero, or a homogeneous polynomial of degree $|B|$.  Frobenius~\cite{Frobeniushimself} called this the \emph{paratrophic determinant} of $(A,B)$.  Some of the results in this section about based algebras are likely in the work of Frobenius in a different language, and so we have included proofs for the convenience of the reader.

If $S$ is a semigroup, $A=\mathbb CS$ and $B=S$, then we put $\theta_S=\theta_{(\mathbb CS,S)}$ and call it the \emph{(Dedekind-Frobenius) semigroup determinant} of $S$. If we put $C(S)=C(\mathbb CS,S)$, then $C(S)_{s,t} = x_{st}$, and so $C(S)$ is an encoding of the multiplication table of $S$. Ok\'ninski calls it the Nakayama matrix of parameters of $S$ in~\cite[Chapter~16]{oknisemigroupalgebra}. When $G$ is a group, the group determinant $\theta_G$ was introduced by Dedekind in correspondence with Frobenius, the latter of whom first factored it into irreducibles, whence the name~\cite{ConradRep}.  If the semigroup $S$ is fixed, we often write $X$ instead of $X_S$.

Wilf factored the Dedekind-Frobenius determinant of a finite lattice, made into a semigroup via its meet operation~\cite{Wilf}; his interest was to give a method to compute the determinant of certain combinatorially defined matrices. This factorization was independently discovered by Lindstr\"{o}m~\cite{latticedet2} in the slightly more general case of meet semilattices.  An important special case is the so-called Smith determinant~\cite{smith}, which is the semigroup determinant of the semilattice $\{1,\dots, n\}$ with binary operation the greatest common divisor specialized at $x_i=i$, i.e., the determinant of the $n\times n$ matrix with $ij$-entry $\gcd(i,j)$.  Wood~\cite{woodsemigroup} factored the semigroup determinant of the multiplicative monoid of a finite commutative chain ring in connection with an approach to generalizing the MacWilliams' extension theorem from codes over finite fields to codes over finite rings.

 The first issue to deal with is when $\theta_{(A,B)}$ vanishes identically. This will show in one fell swoop that many semigroup determinants vanish. Recall that a finite dimensional unital $K$-algebra $A$ is \emph{Frobenius} if there is a linear mapping $\lambda\colon A\to K$ (called a \emph{Frobenius form}) such that the bilinear form $(a,b)\mapsto \lambda(ab)$ is nondegenerate or, equivalently, $\ker \lambda$ contains no nonzero left or right ideal of $A$~\cite{benson}.  Tensor products and direct products of Frobenius algebras are Frobenius and matrix algebras over a field are Frobenius (you can use the trace as $\lambda$).  Hence all semisimple complex algebras are Frobenius. Note that $A^*=\Hom_K(A,K)$ is a left $A$-module via $(af)(b) = f(ba)$.  Another characterization of Frobenius algebras is as those unital algebras for which $A^*$ is isomorphic to $A$ as a left $A$-module~\cite{Lam2}.

 The following theorem (in the unital setting) is what Lam calls Frobenius's criterion~\cite[16.83]{Lam2}, although it precedes the modern definition, which arose in the work of Brauer, Nesbitt and Nakayama.

\begin{Thm}\label{t:vanish.frob}
 Let $(A,B)$ be a based algebra.  Then $\theta_{(A,B)}\neq 0$ if and only if $A$ is a Frobenius algebra (and, in particular, unital).
\end{Thm}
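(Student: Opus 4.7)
The plan is to exploit the universal role played by $\theta_{(A,B)}$: for any linear functional $\lambda\colon A\to\mathbb{C}$, specializing each variable $x_b$ to $\lambda(b)$ turns the Cayley matrix $C(A,B)$ into the Gram matrix of the associative bilinear form $B_\lambda(a,c):=\lambda(ac)$ in the basis $B$, since the $(b,b')$-entry becomes $\sum_{b''}c_{b'',b,b'}\lambda(b'')=\lambda(bb')$. Hence $\theta_{(A,B)}$ is not identically zero if and only if there exists some $\lambda\in A^*$ for which $B_\lambda$ is nondegenerate.

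The forward direction is then immediate: if $A$ is Frobenius with Frobenius form $\lambda$, then $B_\lambda$ is nondegenerate by definition, so the specialized Gram matrix is invertible and $\theta_{(A,B)}$ attains a nonzero value. For the converse, if $\theta_{(A,B)}\neq 0$ I pick a specialization $(\alpha_b)_{b\in B}$ at which it does not vanish and define $\lambda\in A^*$ by $\lambda(b)=\alpha_b$; the resulting Gram matrix is invertible, and this invertibility translates into $B_\lambda$ being nondegenerate on both sides. To finish I only need to produce an identity in $A$, since then $\lambda$ will automatically be a Frobenius form.

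The construction of the identity is the crux. I would use the two linear maps $\sigma,\rho\colon A\to A^*$ given by $\sigma(a)(x)=\lambda(ax)$ and $\rho(a)(x)=\lambda(xa)$; both are injective by the two-sided nondegeneracy of $B_\lambda$, hence bijective for dimension reasons. Setting $u=\sigma\inv(\lambda)$ and $u'=\rho\inv(\lambda)$ yields elements satisfying $\lambda(ux)=\lambda(x)=\lambda(xu')$ for every $x\in A$. The plan is then to show that $u$ is a left identity through the chain
\[\sigma(ua)(x)=\lambda(uax)=\lambda(ax)=\sigma(a)(x)\]
combined with injectivity of $\sigma$, and symmetrically that $u'$ is a right identity via $\rho$; a standard argument then gives $u=uu'=u'$, a two-sided identity.

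The main obstacle is exactly this construction of the unit. The hypothesis $\theta_{(A,B)}\neq 0$ only delivers a nondegenerate associative bilinear form, and it is not obvious on general grounds that such a form on a potentially non-unital finite-dimensional algebra forces the existence of a unit. The essential trick is to ``invert the form on itself'' by pulling $\lambda\in A^*$ back through $\sigma$ and $\rho$, which is what promotes the equality of functionals $\lambda(u\cdot -)=\lambda(-)$ into the genuine module identity $ua=a$.
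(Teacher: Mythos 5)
Your proposal is correct and follows essentially the same route as the paper: the paper also identifies specializations of $C(A,B)$ with Gram matrices of the forms $\lambda(ac)$, and its construction of the unit (solving $C\eta=(\lambda(b_k))_k$ for $e=\sum\eta_k b_k$ with $\lambda(xe)=\lambda(x)$, then upgrading to $xe=x$ via associativity and nondegeneracy) is exactly your $u'=\rho\inv(\lambda)$ argument written in coordinates. The only cosmetic difference is your basis-free phrasing through the maps $\sigma,\rho\colon A\to A^*$.
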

\begin{proof}
Let $B=\{b_1,\ldots, b_n\}$ and suppose that the structure constants are given by $b_ib_j=\sum_{k=1}^n c_{kij} b_k$.  Let $\lambda\colon A\to \mathbb C$ be a linear map.  Then $\lambda(b_ib_j) =\sum_{k=1}^n c_{kij}\lambda(b_k)$, and so we see that specializing $x_b$ to $\lambda(b)$ turns $C(A,B)$ into the matrix of the bilinear from $(a,a')\mapsto \lambda(aa')$ with respect to the basis $B$.  Therefore, there is a $\lambda$ giving a nondegenerate form if and only if $\theta_{(A,B)}$ does not vanish identically.  Since $\mathbb C$ is an infinite field, this is equivalent to $\theta_{(A,B)}\neq 0$.

It follows immediately that if $A$ is Frobenius, then $\theta_{(A,B)}\neq 0$. For the converse, all that remains to show is that if $\theta_{(A,B)}\neq 0$, then $A$ is unital.  By the previous paragraph, we can find $\lambda\colon B\to \mathbb C$ such that $(a,a')\mapsto \lambda(aa')$ is a nondegenerate bilinear form with matrix $C$ (with respect to $B$) obtained by specializing $C(A,B)$ at the variables $x_b=\lambda(b)$.  Hence we can find a $\eta\in \mathbb C^n$ such that $(C\eta)_k=\lambda(b_k)$, for $k=1,\ldots, n$, by nonsingularity of $C$.  Let $e=\sum_{k=1}^n\eta_kb_k\in A$. Note that $\lambda(b_ke) = (C\eta)_k=\lambda(b_k)$ by choice of $e$.  Therefore, we have, for any $i,j$,
\[\lambda(b_ib_je) =\sum_{i=1}^k c_{kij}\lambda(b_ke)=\sum_{i=1}^k c_{kij}\lambda(b_k) = C_{ij}= \lambda(b_ib_j).\]  Fixing $j$, this  yields $\lambda(b_i(b_j-b_je))=0$ for all $i$ and hence $b_j=b_je$ by nondegeneracy of $(a,a')\mapsto \lambda(aa')$.  Since $j$ was arbitrary, we deduce that $e$ is a right identity for $A$.  A symmetric argument shows that $A$ has a left identity and hence $A$ is unital.
\end{proof}

The proof that $A$ is unital is modelled after~\cite[Chapter~16, Proposition~22]{oknisemigroupalgebra}, which considers the special case of contracted semigroup algebras.  Notice that the above proof shows that if $A$ is a finite dimensional $K$-algebra and there is a functional $\lambda\colon A\to K$ whose kernel contains no  left or right ideal, then $A$ must be unital.

\begin{Cor}\label{c:squared.algebra}
Let $(A,B)$ be a based algebra.  If $\theta_{(A,B)}\neq 0$, then $A^2=A$.  In particular, if $S$ is a semigroup and $S^2\neq S$, then $\theta_S=0$.
\end{Cor}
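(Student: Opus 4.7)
The plan is to derive this as an almost immediate consequence of Theorem~\ref{t:vanish.frob}.  That theorem tells us that $\theta_{(A,B)}\neq 0$ forces $A$ to be a Frobenius algebra and hence, crucially, unital.  Once we have an identity $1_A\in A$, the identity $a = 1_A\cdot a$ shows that every $a\in A$ lies in $A^2$, giving $A^2=A$.  So the first sentence of the corollary reduces to invoking Theorem~\ref{t:vanish.frob} and writing this one line.

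For the semigroup statement I would proceed by contrapositive.  Assume $S^2\neq S$ and pick $s_0\in S\setminus S^2$.  The subspace $(\mathbb C S)^2$ is spanned by all products $st$ with $s,t\in S$, so it equals the linear span of $S^2$ inside $\mathbb C S$.  Because $S$ is a basis of $\mathbb C S$, the element $s_0$ is linearly independent from the elements of $S^2$, hence $s_0\notin (\mathbb C S)^2$.  Therefore $(\mathbb C S)^2\neq \mathbb C S$, and by the first part (applied to $A=\mathbb C S$, $B=S$) we conclude $\theta_S=0$.

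The only subtle point to watch is the step identifying $(\mathbb C S)^2$ with the span of $S^2$: one must note that any linear combination $\sum_i \alpha_i a_i\cdot \sum_j \beta_j b_j$ expands to a linear combination of basis products $a_ib_j\in S^2$, so no products lying outside $\mathbb C\cdot S^2$ can be produced.  This is routine but worth stating carefully since the rest of the argument hinges on it.  There is no real obstacle here; the work has already been done in Theorem~\ref{t:vanish.frob}.
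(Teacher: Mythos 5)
Your proof is correct and follows exactly the paper's route: invoke Theorem~\ref{t:vanish.frob} to get unitality (hence $A^2=A$), and observe that $(\mathbb CS)^2=\mathbb CS^2$ so that $S^2\neq S$ forces $(\mathbb CS)^2\neq\mathbb CS$. The extra care you take in justifying $(\mathbb CS)^2=\mathbb CS^2$ is sound but not a different argument.
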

\begin{proof}
The first statement is obvious from Theorem~\ref{t:vanish.frob} since $\theta_{(A,B)}\neq 0$ implies $A$ is unital.  The second statement follows because $(\mathbb CS)^2=\mathbb CS^2$.
\end{proof}

It follows from Theorem~\ref{t:vanish.frob} that the paratrophic determinant of a semisimple algebra never vanishes.  We shall investigate this case in more detail shortly.  Note that Brauer showed that if $A$ is unital, then  the intertwiners between $A$ and $A^*$ are exactly the paratrophic matrices, i.e., those matrices obtained by specializing $C(A,B)$ at elements of $\mathbb C^B$.

Unlike the case of the group determinant, which never vanishes, the semigroup determinant can be identically zero and figuring out when this happens is at least as hard as determining if a $0/1$-square matrix is nonsingular. For starters, we show that, for von Neumann regular semigroups, the semigroup determinant often vanishes.  A semigroup $S$ is (von Neumann) \emph{regular} if, for all $s\in S$, there exists $t\in S$ with $sts=s$.  For example, the full transformation monoid $T_n$ of all maps on an $n$-element set is regular, as is the multiplicative monoid of $M_n(F)$ for a finite field $F$.  It follows from Theorem~\ref{t:vanish.frob} and~\cite[Chapter~16, Theorem~17]{oknisemigroupalgebra} (or, for monoids,~\cite[Theorem~15.6]{repbook}) that if $S$ is a finite regular semigroup, then $\theta_S\neq 0$ if and only if $\mathbb CS$ is Frobenius, if and only if $\mathbb CS$ is semisimple.  But many classes of regular semigroups do not have semisimple algebras; for example $\mathbb CT_n$ is not semisimple for $n\geq 2$~\cite{repbook}.  Recall that a semigroup is a \emph{band} if each element is idempotent.  A commutative band is called a \emph{semilattice} since it can be endowed with a partial order such that the product is the meet.  The representation theory of bands has been extensively studied in connection with hyperplane arrangements and Markov chains~\cite{BHR,Brown1,Brown2,tha,MSS,ourmemoirs}. A band is, of course, regular and has a semisimple algebra if and only if it is commutative; see~\cite[Chapter~5]{CP} (or~\cite{repbook} for the case of monoids).

\begin{Cor}
The semigroup determinant of $T_n$ vanishes for all $n\geq 2$.  If $S$ is a band, then the semigroup determinant of $S$ is nonzero if and only if $S$ is a semilattice.
\end{Cor}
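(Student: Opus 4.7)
Both statements are immediate consequences of the results cited in the paragraph just before the corollary, so the plan is essentially bookkeeping.

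The plan is to invoke the fact, noted just above the corollary, that for a finite regular semigroup $S$ one has $\theta_S\neq 0$ iff $\mathbb{C}S$ is Frobenius iff $\mathbb{C}S$ is semisimple (this combines Theorem~\ref{t:vanish.frob} with the cited results of Ok\'ninski and the author). Once we have this equivalence, the corollary reduces to checking semisimplicity of $\mathbb{C}T_n$ and $\mathbb{C}S$ for $S$ a band, both of which have been handled in the literature.

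For the first assertion, $T_n$ is a regular monoid (every map $f\colon [n]\to[n]$ admits a ``section'' $g$ with $fgf=f$). Hence $\theta_{T_n}\neq 0$ iff $\mathbb{C}T_n$ is semisimple. The paper just recalled that $\mathbb{C}T_n$ fails to be semisimple for $n\geq 2$, so $\theta_{T_n}=0$.

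For the second assertion, any band $S$ is regular since $s\cdot s\cdot s=s$. Thus $\theta_S\neq 0$ iff $\mathbb{C}S$ is semisimple. The paper cites the classical result (see~\cite[Chapter~5]{CP}) that the algebra of a band is semisimple precisely when the band is commutative, i.e., precisely when $S$ is a semilattice. Combining gives the ``iff''.

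There is no real obstacle here; the only thing to verify carefully is the straightforward assertion that $T_n$ is regular, and to cite the two results (semisimplicity criterion for regular semigroups, and for bands) in the correct form. The proof can thus be written as a short two-case argument of a few lines, with all the real work already quoted.
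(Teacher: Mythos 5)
Your proposal is correct and matches the paper's intended argument exactly: the corollary is stated without proof precisely because it follows immediately from the preceding paragraph's equivalence ($\theta_S\neq 0$ iff $\mathbb CS$ semisimple for regular $S$) together with the cited non-semisimplicity of $\mathbb CT_n$ and the classical criterion for bands. Nothing further is needed.
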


An easy to check necessary condition for the semigroup determinant  not to vanish is the following.

\begin{Cor}\label{c:intertwiner}
Let $S$ be a semigroup.  A necessary condition for $\theta_S\neq 0$ (i.e., for $\mathbb CS$ to be Frobenius) is that each element $s\in S$ fixes the same number of elements of $S$ under both left multiplication and right multiplication. 
\end{Cor}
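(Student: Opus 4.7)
The plan is to interpret each of the two counts as the trace of a linear operator on $\mathbb CS$, and then to deduce the equality of traces from the Frobenius structure guaranteed by Theorem~\ref{t:vanish.frob}. First I would observe that, for $s\in S$, the operator $L_s\colon \mathbb CS\to\mathbb CS$ of left multiplication by $s$ has, in the basis $S$, diagonal entry at $t$ equal to the coefficient of $t$ in $st$; since $st\in S$ and $S$ is a basis, this coefficient is $1$ when $st=t$ and $0$ otherwise. Hence $\mathrm{tr}(L_s) = |\{t\in S : st=t\}|$, and symmetrically $\mathrm{tr}(R_s) = |\{t\in S : ts=t\}|$ for right multiplication $R_s$. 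So the corollary reduces to proving that $\mathrm{tr}(L_s)=\mathrm{tr}(R_s)$ for every $s\in S$ under the hypothesis that $\theta_S\neq 0$.

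Next I would invoke Theorem~\ref{t:vanish.frob} to conclude that $\mathbb CS$ is a Frobenius algebra, and then use the characterization recalled immediately before that theorem: $\mathbb CS$ is Frobenius if and only if there is an isomorphism $\mathbb CS \cong (\mathbb CS)^*$ of left $\mathbb CS$-modules. Isomorphic modules have the same character, so for every $a\in \mathbb CS$ the trace of the left action of $a$ on $\mathbb CS$ coincides with the trace of its left action on $(\mathbb CS)^*$. On $\mathbb CS$ itself this action is $L_a$, with trace $\mathrm{tr}(L_a)$; on $(\mathbb CS)^*$ the left action is defined by $(af)(b)=f(ba)$, so $a$ acts as the transpose of $R_a$, whose trace is $\mathrm{tr}(R_a)$. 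Specializing $a=s$ yields $\mathrm{tr}(L_s)=\mathrm{tr}(R_s)$, completing the argument.

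The only real content in this proof is the identification of the fixed-point counts with the traces of $L_s$ and $R_s$ in the basis $S$; once that is in place, the equality of traces is an immediate consequence of the module-theoretic reformulation of the Frobenius property, and there is no substantive obstacle to overcome.
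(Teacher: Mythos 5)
Your argument is correct and is essentially identical to the paper's: both identify the two fixed-point counts with $\mathrm{tr}(L_s)$ and $\mathrm{tr}(R_s)$, use Theorem~\ref{t:vanish.frob} together with the characterization $\mathbb CS\cong(\mathbb CS)^*$ as left modules, and note that the left action on the dual is the transpose of $R_s$, so the characters agree. No gaps.
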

\begin{proof}
If $\mathbb CS$ is Frobenius, then $\mathbb CS^*\cong \mathbb CS$ as left $\mathbb CS$-modules, and  therefore they have the same character.  The character of $\mathbb CS$ as a left module counts the number of fixed points of $s$ under left multiplication.  The character of $\mathbb CS$ as a right module counts the number of fixed points of $s$ under right multiplication.  Since taking the transpose of a matrix doesn't change the trace, the character of $\mathbb CS^*$ also counts the number of fixed points of $s$ under right multiplication.  The result follows.
\end{proof}

For example, in $T_n$, a constant mapping has one fixed point (itself) when multiplying on the left and $n$ fixed points (the constant mappings) when acting on the right, and so $\mathbb CT_n$ is not Frobenius by Corollary~\ref{c:intertwiner} for $n\geq 2$.  On the other hand, if $M$ is a commutative monoid every element has the same number of fixed points on the left and right.  But not all commutative monoids have a Frobenius algebra.  For example, if $M$ is the commutative monoid with generators $x,y$ and relations saying that all  products of length $2$ or more are equal, then $\mathbb CM\cong \mathbb C\times \mathbb C[x,y]/(x,y)^2$ is not Frobenius; see also Section~\ref{s:nilpotents}.  So the condition in Corollary~\ref{c:intertwiner} is necessary but not sufficient.

It turns out that in order to compute certain semigroup determinants we also need to compute category determinants (more specifically, groupoid determinants).  If $\mathcal C$ is a category, we write $\mathcal C_0$ for the set of objects, $\mathcal C_1$ for the set of arrows and $\dom,\ran$ for the domain and range maps.
If $\mathcal C$ is a finite category and $K$ is a field, there is a well-known \emph{category algebra} $K\mathcal C$ going back to at least Mitchell~\cite{ringoids}. The algebra has basis $\mathcal C_1$ and the product is defined on the basis by
\[a\cdot b = \begin{cases} ab, & \text{if}\ \dom(a)=\ran(b)\\ 0 , & \text{else.}\end{cases}\]  The algebra $K\mathcal C$ is unital with identity $\sum_{c\in \mathcal C_0}1_c$. Note that the $1_c$ with $c\in \mathcal C_0$ are orthogonal idempotents. From now on, we write $ab$ instead of $a\cdot b$ as no confusion should arrive.  We define the \emph{category determinant} of $\mathcal C$ to be $\theta_{\mathcal C} =\theta_{(\mathbb C\mathcal C,\mathcal C_1)}$.  Notice that $C(\mathcal C) = C(\mathbb C\mathcal C,\mathcal C_1)$ is given by
\[C(\mathcal C)_{a,b} = \begin{cases} x_{ab}, & \text{if}\ \dom(a)=\ran(b)\\ 0, & \text{else.}\end{cases}\]

If a finite monoid $M$ is viewed as a category with a single object, then the determinant  of $M$, viewed as a semigroup or as a category is the same.  More generally, if $\mathcal C$ is a finite category in which every arrow is an endomorphism, then $C(\mathcal C)$ is a block diagonal matrix with the Cayley tables of the endomorphism monoids on the diagonal and so the category determinant of $\mathcal C$ is the product of the semigroup determinants of the endomorphism monoids.

Conversely, category determinants can be computed as variants of the semigroup determinant.  Let $S$ be a semigroup with a zero element $z$; so $zS=\{z\}=Sz$.   Then the \emph{contracted semigroup algebra} of $S$ is $K_0S=KS/Kz$;  note that $Kz$ is a one-dimensional two-sided ideal.  We can view $K_0S$ as having basis the nonzero elements of $S$ and multiplication extending that of $S$, but where we identify the zero of the semigroup with the zero of the algebra. We put $\til C(S) = C(\mathbb C_0S,S\setminus \{z\})$ and $\til \theta_S =\det \til C(S)$ and call it the \emph{contracted semigroup determinant} of $S$ (Wood~\cite{woodsemigroup} uses the term ``reduced'').  We often put $\til X= X_{S\setminus \{z\}}$ if $S$ is understood.  Note that \[\til C(S)_{s,t}= \begin{cases} x_{st}, & \text{if}\ st\neq z\\ 0,  & \text{else.} \end{cases}\]

 For example, if $\mathcal C$ is a finite category, we can build a semigroup $S=\mathcal C_1\cup \{z\}$ by defining $ab=z$ whenever $\dom(a)\neq \ran(b)$ and $zS=\{z\}=Sz$ (and, of course, we keep the multiplication of composable arrows).  Then $\til C(S)= C(\mathcal C)$, as is easily checked, and so $\theta_{\mathcal C}=\til \theta_S$. 

We shall also need later  twisted (contracted) semigroup algebras.  If $S$ is a semigroup, a \emph{twisted semigroup algebra} of $S$ over a field $K$ is a $K$-algebra with a basis $\{\ov s\mid s\in S\}$ in bijection with $S$ with the property that $\ov s\cdot \ov t=c(s,t)\ov{st}$ for some $c(s,t)\in K^\times$.  We sometimes write $K(S,c)$ for the twisted semigroup algebra with twist $c$.  The mapping $c\colon S\times S\to K^\times$ is a $2$-cocycle.   If $S$ is a semigroup with zero $z$, we call a $K$-algebra  with basis $\{\ov s\mid s\in S\setminus \{z\}\}$ in bijection with the nonzero elements of $S$ a \emph{twisted contracted semigroup algebra} of $S$ if
\[\ov s\cdot \ov t=\begin{cases}c(s,t)\ov {st}, & \text{if}\ st\neq z\\ 0, & \text{else}\end{cases}\] where $c(s,t)\in K^\times$ whenever $st\neq z$.  We write $K_0(S,c)$ for this algebra, but note that $c$ is only a partial mapping and hence is slightly more complicated to axiomatize than a standard $2$-cocycle (cf.~\cite{ClarkTwist}). 
Of course, when the twist is trivial we recover the usual notions of semigroup algebra and contracted semigroup algebra.  We will write $\theta_{S,c}$ (respectively, $\til \theta_{S,c}$) for the paratrophic determinant of a twisted (contracted) semigroup algebra with respect to the distinguished basis.  If $M$ is a monoid, then by a twisted (contracted) monoid algebra, we mean a twisted  (contracted) semigroup algebra for which  $\ov 1$ is the identity of the algebra, that is, the cocycle $c$ is normalized in the sense that $c(1,m)=1=c(m,1)$ for all $m$ (in the contracted case with $m\neq z$).  Throughout the text, we will identify the distinguished basis of a twisted semigroup algebra with the semigroup and write $s$ instead of $\ov s$.  Hopefully, no confusion will arise.

 To compute paratrophic determinants, we often use how they behave under direct product and isomorphism.  If $(A_1,B_1),\ldots, (A_n,B_n)$ are based algebras, we put \[(A_1,B_1)\times\cdots\times (A_n,B_n)=(A_1\times\cdots \times A_n, B_1\uplus\cdots\uplus B_n)\] where the union is disjoint.  Here we view $A_1\times\cdots\times A_n$ as an internal direct sum of the $A_i$ with the rule that $A_iA_j=0$  when $i\neq j$.

\begin{Prop}\label{p:direct.prod}
 If $(A_1,B_1),\ldots, (A_n,B_n)$ are based algebras, then \[\theta_{(A_1,B_1)\times\cdots\times (A_n,B_n)} = \theta_{(A_1,B_1)}\cdots\theta_{(A_n,B_n)}.\]
\end{Prop}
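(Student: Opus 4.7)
The plan is straightforward: I would exhibit the Cayley table of the direct product as a block-diagonal matrix whose diagonal blocks are the individual Cayley tables, and then conclude by multiplicativity of the determinant over block-diagonal matrices.

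First, I would order the basis $B_1 \uplus \cdots \uplus B_n$ so that the elements of $B_i$ precede those of $B_{i+1}$, which allows me to speak of ``the $B_i \times B_j$ block'' of the Cayley table. Next, I would unpack the structure constants of $A_1 \times \cdots \times A_n$ with respect to this basis. Given $b \in B_i$ and $b' \in B_j$, the rule $A_i A_j = 0$ for $i \neq j$ forces $bb' = 0$ whenever $i \neq j$, so every structure constant $c_{b'', b, b'}$ vanishes in that case. When $i = j$ the product $bb'$ lies entirely in $A_i$, so it expands only in terms of $B_i$, meaning $c_{b'', b, b'} = 0$ unless $b'' \in B_i$ as well, and the surviving constants are exactly the structure constants of $(A_i, B_i)$.

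Translating this to the entries of $C((A_1, B_1) \times \cdots \times (A_n, B_n))$, the off-diagonal blocks indexed by $B_i \times B_j$ with $i \neq j$ are identically zero, while the $B_i \times B_i$ block is
\[
\sum_{b'' \in B_i} c_{b'', b, b'} x_{b''} = C(A_i, B_i)_{b, b'}.
\]
Crucially, because the $B_i$ are disjoint, the variables $\{x_b \mid b \in B_i\}$ appearing in the $i$-th diagonal block are pairwise disjoint across different $i$, so nothing mixes the blocks even on the level of polynomial rings. At this point the Cayley table is genuinely block diagonal with diagonal blocks $C(A_1, B_1), \ldots, C(A_n, B_n)$, and taking determinants yields $\theta_{(A_1, B_1) \times \cdots \times (A_n, B_n)} = \theta_{(A_1, B_1)} \cdots \theta_{(A_n, B_n)}$.

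There is no real obstacle here; the argument is pure bookkeeping about how the internal direct sum decomposition interacts with the distinguished basis. The only thing to be careful about is making explicit that the variables in different blocks do not overlap, so that the block-diagonal determinant formula applies verbatim in the polynomial ring $\mathbb{C}[X_{B_1 \uplus \cdots \uplus B_n}]$.
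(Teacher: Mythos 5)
Your argument is correct and is essentially identical to the paper's proof: both observe that $B_iB_j=0$ for $i\neq j$ forces the Cayley table to be block diagonal with diagonal blocks $C(A_i,B_i)$, and then apply the determinant of a block-diagonal matrix. The extra care you take about the disjointness of the variable sets across blocks is a fine (if not strictly necessary) elaboration of the same bookkeeping.
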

\begin{proof}
Since $B_i^2\subseteq A_i$ and $B_iB_j=0$ when $i\neq j$, we have that the matrix $C((A_1,B_1)\times\cdots\times (A_n,B_n))$ is block diagonal with diagonal blocks $C(A_1,B_1),\ldots, C(A_n,B_n)$.  The result follows.
\end{proof}

A more complicated situation is the behavior of the paratrophic determinant under isomorphism (or change of basis).  It turns out that the determinant changes by a linear homogeneous change of variables and multiplication by a nonzero constant; thus the general structure of its factorization into irreducible polynomials is independent of the basis. If $h\colon R\to R'$ is a ring homomorphism, we also use $h$ for the induced homomorphism of matrix algebras that applies $h$ entrywise.

\begin{Thm}\label{t:cvt}
Let $(A,B)$ and $(A',B')$ be based algebras and let $f\colon A\to A'$ be a $\mathbb C$-algebra homomorphism.  Let $P$ be the $B'\times B$ matrix of $f$ with respect to the bases $B$ and $B'$.  Let $\til f\colon \mathbb C[X_B]\to \mathbb C[X_{B'}]$ be the homomorphism $x_b\mapsto \sum_{b'\in B'}P_{b',b}x_{b'}$ induced by $f$ (note that $\til f(x_b)$ is a linear homogeneous polynomial).
Then $\til f (C(A,B)) = P^TC(A',B')P$.  Therefore, if $f$ is an isomorphism, $\theta_{(A,B)} = \det(P)^2\til f\inv(\theta_{(A',B')})$.
\end{Thm}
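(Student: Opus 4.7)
The plan is to verify the matrix identity $\til f(C(A,B)) = P^T C(A',B') P$ entry by entry, and then derive the determinant equation by taking $\det$ of both sides.

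First I would unpack the $(b_1,b_2)$-entry of $\til f(C(A,B))$. Since $C(A,B)_{b_1,b_2} = \sum_{b\in B} c_{b,b_1,b_2} x_b$, applying $\til f$ gives $\sum_{b'\in B'} \bigl(\sum_{b\in B} c_{b,b_1,b_2} P_{b',b}\bigr) x_{b'}$. The inner sum is precisely the coefficient of $b'$ in $f(b_1 b_2) = \sum_b c_{b,b_1,b_2} f(b)$, so this entry is the linear form whose $x_{b'}$-coefficient is the $b'$-coordinate of $f(b_1b_2)$ with respect to $B'$.

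Next I would expand the $(b_1,b_2)$-entry of $P^T C(A',B') P$ as $\sum_{b_1',b_2'\in B'} P_{b_1',b_1} P_{b_2',b_2} C(A',B')_{b_1',b_2'}$. Using that $f$ is an algebra homomorphism, the product $f(b_1)f(b_2) = \bigl(\sum_{b_1'} P_{b_1',b_1} b_1'\bigr)\bigl(\sum_{b_2'} P_{b_2',b_2} b_2'\bigr)$ expands, via the structure constants $c'_{b',b_1',b_2'}$ of $A'$, to $\sum_{b'} \bigl(\sum_{b_1',b_2'} P_{b_1',b_1} P_{b_2',b_2} c'_{b',b_1',b_2'}\bigr) b'$. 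Matching $x_{b'}$-coefficients with the previous paragraph and using $f(b_1 b_2) = f(b_1)f(b_2)$ yields the matrix identity.

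For the second assertion, when $f$ is an isomorphism $P$ is invertible, so $\til f$ extends to an isomorphism of polynomial rings. Since any ring homomorphism applied entrywise commutes with $\det$, taking $\det$ of the matrix identity gives $\til f(\theta_{(A,B)}) = \det(P^T)\det(P)\theta_{(A',B')} = \det(P)^2 \theta_{(A',B')}$, and applying $\til f\inv$ (which fixes the scalar $\det(P)^2$) yields the claimed formula. The proof is essentially bookkeeping; the main thing to get right is the convention that $P$ has rows indexed by $B'$ and columns by $B$, so it is $P^T$ that multiplies $C(A',B')$ on the left. There is no deep obstacle, only the observation that twisting the structure constants of $A'$ by the coordinates of $f(b_1)$ and $f(b_2)$ gives the coordinates of $f(b_1b_2)$, which is exactly the content of $f$ being multiplicative.
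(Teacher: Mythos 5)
Your proof is correct and follows essentially the same route as the paper: both arguments compare the two expansions of $f(b_1b_2)$ (via the structure constants of $A$ pushed forward by $P$, and via the structure constants of $A'$ applied to $f(b_1)f(b_2)$) to match the entries of $\til f(C(A,B))$ and $P^TC(A',B')P$, and then take determinants. The only difference is presentational — you match $x_{b'}$-coefficients directly rather than first isolating the structure-constant identity, and you spell out the final determinant step that the paper leaves as ``immediate.''
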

\begin{proof}
This is a computation.  Let $b_1,b_2\in B$.  Then we have
\begin{align*}
f(b_1) &= \sum_{b_1'\in B'} P_{b_1'b_1}b_1'\\
f(b_2) &= \sum_{b_2'\in B'} P_{b_2'b_2}b_2'.
\end{align*}
Since $f$ is a homomorphism,
\begin{equation}\label{eq:applyf}
f(b_1b_2) =f(b_1)f(b_2) = \sum_{b_1',b_2',b'\in B'} P_{b_1',b_1}P_{b_2',b_2}c_{b',b_1',b_2'}b'.
\end{equation}
On the other hand, $b_1b_2 = \sum_{b\in B}c_{b,b_1,b_2}b$ and so
\begin{equation}\label{eq:applyf.2}
f(b_1b_2) = \sum_{b\in B, b'\in B'}c_{b,b_1,b_2}P_{b',b}b'.
\end{equation}
Comparing \eqref{eq:applyf} and \eqref{eq:applyf.2}, we deduce that
\begin{equation}\label{eq:applyf.3}
\sum_{b\in B}c_{b,b_1,b_2}P_{b',b} = \sum_{b_1',b_2'\in B'} P_{b_1',b_1}P_{b_2',b_2}c_{b',b_1',b_2'}
\end{equation}
for all $b_1,b_2\in B$ and $b'\in B'$.

Therefore, we have
\begin{align*}
\til f(C(A,B)_{b_1,b_2}) &= \sum_{b\in B,b'\in B'} c_{b,b_1,b_2}P_{b',b}x_{b'}\\ & =\sum_{b_1',b_2',b'\in B'} P_{b_1',b_1}P_{b_2',b_2}c_{b',b_1',b_2'}x_{b'}
\\ &= (P^TC(A',B')P)_{b_1,b_2}
\end{align*}
where the second equality used \eqref{eq:applyf.3}.  The final statement is immediate.
\end{proof}

As a first application, we relate the contracted semigroup determinant and semigroup determinant of a semigroup $S$ with zero.  This is essentially in~\cite{woodsemigroup}.

\begin{Prop}\label{p:backnforth}
Let $S$ be a semigroup with zero element $z$.  Then $f\colon \mathbb CS\to \mathbb C_0S\times \mathbb Cz$  sending $s\mapsto (s,z)$ for $s\in S\setminus \{z\}$ and $z\mapsto (0,z)$ is a $\mathbb C$-algebra isomorphism.       Put $y_s=x_s-x_z$ for $s\neq z$ and let $Y=\{y_s\mid s\in S\setminus \{z\}\}$.
Then $\theta_S(X)= x_z\til \theta_S(Y)$.  Hence $\til \theta_S(\til X)$ is obtained from $\theta_S(X)/x_z$ by replacing $x_z$ by $0$.
\end{Prop}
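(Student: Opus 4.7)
The plan is first to establish that $f$ is a $\mathbb{C}$-algebra isomorphism and then to derive the determinant formula by applying Theorem~\ref{t:cvt} together with Proposition~\ref{p:direct.prod}.

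For the isomorphism, I would check $f(s)f(t) = f(st)$ on basis pairs $s,t \in S$. The key observations are that $z^2 = z$ (the semigroup zero is an idempotent of $\mathbb{C}S$, not a ring zero), that $\mathbb{C}z$ is a one-dimensional subalgebra with the same relation, and that in $\mathbb{C}_0 S$ one has $s \cdot t = 0$ whenever $st = z$ in $S$. Bijectivity then follows from the dimension count $\dim \mathbb{C}S = |S| = |S \setminus \{z\}| + 1 = \dim(\mathbb{C}_0 S \times \mathbb{C}z)$ together with surjectivity, since $(s,0) = f(s-z)$ for $s \neq z$ and $(0,z) = f(z)$.

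To establish the determinant identity, I would apply Theorem~\ref{t:cvt} using the basis $B = S$ of $\mathbb{C}S$ and the disjoint-union basis $B' = (S \setminus \{z\}) \uplus \{z'\}$ of $\mathbb{C}_0 S \times \mathbb{C}z$, where $z'$ denotes the basis vector of the $\mathbb{C}z$ factor, with attached variable $u$. Writing $f(s) = s + z'$ for $s \neq z$ and $f(z) = z'$, the transition matrix $P$ is block lower triangular with an identity block over $S \setminus \{z\}$ and a bottom row of ones coming from $z'$, so $\det P = 1$. By Proposition~\ref{p:direct.prod}, $\theta_{(A',B')} = u \cdot \til\theta_S(\til X)$ where $\til X = \{\til x_s : s \neq z\}$, and Theorem~\ref{t:cvt} then yields $\til f(\theta_S(X)) = u \cdot \til\theta_S(\til X)$, where $\til f$ sends $x_s \mapsto \til x_s + u$ for $s \neq z$ and $x_z \mapsto u$.

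Since $\det P = 1$, the substitution $\til f$ is invertible, with inverse $\til x_s \mapsto x_s - x_z$ and $u \mapsto x_z$. Applying this inverse produces $\theta_S(X) = x_z \cdot \til\theta_S(Y)$ with $Y = \{y_s = x_s - x_z : s \neq z\}$. The final assertion is then automatic: $\theta_S(X)/x_z = \til\theta_S(Y)$, and specialising $x_z = 0$ converts each $y_s$ into $x_s$, which under the natural identification of $\til X$ with $\{x_s : s \neq z\}$ recovers $\til\theta_S(\til X)$. I do not anticipate any real obstacle; the proof is essentially bookkeeping, with the only mild hazard being to keep the three variable sets $X$, $\til X$, $\{u\}$ distinct when inverting $\til f$.
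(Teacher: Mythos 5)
Your proposal is correct and follows essentially the same route as the paper: verify $f$ is an algebra isomorphism, note that the transition matrix with $z$ placed last is unitriangular so $\det P=1$, compute $\theta_{(A',B')}=x_{z'}\til\theta_S$ via Proposition~\ref{p:direct.prod}, and pull back through Theorem~\ref{t:cvt} using $\til f^{-1}(x_{z'})=x_z$, $\til f^{-1}(\til x_s)=x_s-x_z$. The bookkeeping of the three variable sets is handled correctly, so nothing further is needed.
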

\begin{proof}
Clearly, $f$ is a vector space isomorphism and it is straightforward to verify that it is a multiplicative homomorphism.  If we use $S\setminus \{z\}$ and $z$ as the basis $B$ for $A=\mathbb C_0S\times \mathbb Cz$, then $\theta_{(A,B)} = x_z\til\theta_S$ by Proposition~\ref{p:direct.prod}. The matrix of $f$ with respect to $S$ and $B$ is lower triangular with ones on the diagonal if we place $z$ last in both orderings and hence has determinant one.   Since $f\inv\colon A\to \mathbb C_0S$ sends $s$ to $s-z$ for $s\in S\setminus \{z\}$ and $z$ to $z$, Theorem~\ref{t:cvt} yields $\theta_S= x_z\til \theta_S(Y)$, as required.
\end{proof}

To deal with semisimple algebras, we start with the case of a matrix algebra. Note that matrix algebras are special cases of contracted semigroup algebras because if $B_n$ is the semigroup of $n\times n$ matrix units, together with $0$, then $\mathbb C_0B_n\cong M_n(\mathbb C)$ as the matrix units form a basis for $M_n(\mathbb C)$.

\begin{Prop}\label{p:matrix.alg}
Let $A=M_n(\mathbb C)$ and let $B$ be the basis of matrix units $E_{ij}$.  Let us write $x_{ij}$ instead of $x_{E_{ij}}$.  Then $\theta_{(A,B)} =(-1)^{\binom{n}{2}}(\det [x_{ij}])^n$ and $\det [x_{ij}]$ is an irreducible homogeneous polynomial of degree $n$.
\end{Prop}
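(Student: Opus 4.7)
The plan is to compute $\theta_{(A,B)}$ directly from the multiplication table of the matrix units and then separately establish irreducibility of the generic $n \times n$ determinant by induction on $n$.  Using $E_{ij}E_{kl} = \delta_{jk}E_{il}$, the Cayley matrix $C(A,B)$ has entry $x_{il}$ in the position with row $(i,j)$ and column $(k,l)$ when $j = k$, and $0$ otherwise.  If I keep the columns in lexicographic order on $(k,l)$ but reorder the rows so they appear in lexicographic order on $(j,i)$---grouping by the second index first---the resulting matrix becomes block diagonal with one $n \times n$ block for each value $j = k \in \{1,\ldots,n\}$, and every such block is exactly the generic matrix $[x_{il}]$.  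Hence the reordered determinant equals $(\det[x_{ij}])^{n}$, and $\theta_{(A,B)}$ differs from it by the sign of the row permutation.

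To compute that sign, observe that the permutation in question swaps each grid position $(i,j)$ with its transpose $(j,i)$ on $\{1,\ldots,n\} \times \{1,\ldots,n\}$, so it is a product of the $\binom{n}{2}$ transpositions $(i,j) \leftrightarrow (j,i)$ with $i < j$.  Its sign is therefore $(-1)^{\binom{n}{2}}$, yielding the claimed formula $\theta_{(A,B)} = (-1)^{\binom{n}{2}}(\det[x_{ij}])^{n}$.

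For the irreducibility of $\det[x_{ij}]$, I would argue by induction on $n$, the case $n=1$ being trivial.  Suppose $\det[x_{ij}] = fg$ with both $f$ and $g$ nonconstant.  Since the determinant is linear in $x_{11}$, exactly one of the factors involves $x_{11}$; say $f$ does and $g \in R = \mathbb C[x_{ij} \mid (i,j) \neq (1,1)]$.  Writing the determinant as $x_{11}M_{11} + N_{0}$, where $M_{11}$ is the $(1,1)$-minor (a generic $(n-1) \times (n-1)$ determinant in the disjoint variables $\{x_{ij} \mid i,j \geq 2\}$), comparison of $x_{11}$-coefficients gives $g \mid M_{11}$.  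By induction $M_{11}$ is irreducible, so $g$ is either a nonzero constant---contradicting nontriviality---or a scalar multiple of $M_{11}$.  In the latter case $M_{11}$ would also divide $N_{0}$; expanding $N_{0}$ along the first column yields $N_{0} = \sum_{i \geq 2} (-1)^{i+1} x_{i1} M_{i1}$, and since each $M_{i1}$ with $i \geq 2$ is independent of all $x_{j1}$, one would need $M_{11} \mid M_{i1}$ for every $i \geq 2$.  But $M_{11}$ depends linearly on $x_{22}$ whereas $M_{21}$ does not involve $x_{22}$ at all, so $M_{11} \nmid M_{21}$, a contradiction that completes the induction.

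The main obstacle is the irreducibility argument; the block-diagonalization of the Cayley matrix and the sign calculation are essentially mechanical bookkeeping.  Care is needed in tracking exactly which variables appear in which minors to rule out the undesired factorization $g \sim M_{11}$; the observation that $M_{21}$ drops the entire second row (and therefore loses $x_{22}$) is the crucial combinatorial input.
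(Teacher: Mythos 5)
Your proof is correct. The first half --- block-diagonalizing the Cayley matrix by sorting one side of the index set by $(j,i)$ instead of $(i,j)$ and reading off the sign $(-1)^{\binom{n}{2}}$ from the transpose permutation --- is essentially identical to the paper's computation (the paper permutes columns rather than rows, which is immaterial). Where you genuinely diverge is the irreducibility of $\det[x_{ij}]$: the paper specializes the generic matrix to a companion-type matrix so that the determinant becomes $t^n - x_1\cdots x_n$, applies Eisenstein, and uses homogeneity of the factors to conclude that no factor can die under the specialization; you instead run an induction on $n$ via the cofactor expansion, using $g\mid M_{11}$, the inductive irreducibility of $M_{11}$, and the observation that $M_{11}$ involves $x_{22}$ while $M_{21}$ does not. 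Both are standard and complete; the paper's specialization trick is shorter and self-contained at each $n$, while your induction is more elementary (no Eisenstein) and makes visible exactly which variables obstruct a nontrivial factorization. One small point worth making explicit in your write-up: since $M_{11}$ lives in the subring $\mathbb C[x_{ij}\mid i,j\geq 2]$, any divisor $g$ of it also lies in that subring (by comparing degrees in each extra variable), so the inductive hypothesis applies to the divisibility $g\mid M_{11}$ exactly as you use it.
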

\begin{proof}
Note that \[C(A,B)_{E_{ij},E_{kl}} = \begin{cases}x_{il}, & \text{if}\ j=k\\ 0, & \text{else.}\end{cases}\]   Order the basis by fixing $j$ and letting $i$ vary, i.e., $E_{11}, E_{21},\ldots, E_{1n},\ldots E_{nn}$, and permute the columns of $C(A,B)$ according to $E_{ij}\mapsto E_{ji}$ to obtain a new matrix $C$.  Note that $\det C = (-1)^{\binom{n}{2}}\theta_{(A,B)}$.  Moreover, $C$ is $n\times n$ block diagonal with the matrix $[x_{ij}]$ on each diagonal block.  The  first statement follows.

For the second statement, substitute $x_{ii}$ by $t$ for $1\leq i\leq n$, $x_{i+1,i}$ by $-x_i$, for $1\leq i\leq n-1$, $x_{1,n}$ by $-x_n$ and the remaining entries by $0$.  Then $\det [x_{ij}]$ specializes to $t^n-x_1\cdots x_n$, which is irreducible in $\mathbb C[x_1,\ldots, x_n,t]$ by Eisenstein's criterion.  Since the irreducible factors of $\det [x_{ij}]$ are homogeneous, the above substitution cannot send any factor to a non-zero constant.  We conclude that $\det [x_{ij}]$ is irreducible.
\end{proof}

As a corollary we may compute the paratrophic determinant for a semisimple algebra with respect to a basis of matrix units.  We can then apply Theorem~\ref{t:cvt} to recover Frobenius's result for group determinants.  If $(A,B)$ is a based algebra and $\rho\colon A\to M_d(\mathbb C)$ is a representation, then $\chi\colon B\to \mathbb C$ defined by $\chi(b)=\mathrm{tr}(\rho(b))$ is called the \emph{character} of $\rho$ (with respect to $B$).  Of course $\chi$ extends linearly to $A$ and just computes $\mathrm{tr}(\rho(a))$, so the basis $B$ is not really important.

\begin{Cor}\label{c:ss.alg}
Let $A$ be a semisimple algebra with basis $B$.  Suppose that $A$ has $r$ irreducible representations $\rho^{(1)},\ldots, \rho^{(r)}$ (up to equivalence) of degrees $d_1,\ldots, d_r$ and let $\chi_1,\ldots,\chi_r$ be the corresponding characters.  Then $\theta_{(A,B)} = \prod_{i=1}^r P_i^{d_i}$ where $P_i$ is a homogeneous irreducible polynomial of degree $d_i$.  Moreover, $\chi_i$ can be recovered from $P_i$ and the $P_i$ are distinct.
\end{Cor}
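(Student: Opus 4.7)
The plan is to reduce to the matrix-unit basis of a Wedderburn decomposition, use Propositions \ref{p:direct.prod} and \ref{p:matrix.alg} to factor the paratrophic determinant there, and then transfer back to $(A,B)$ via the change-of-basis formula in Theorem \ref{t:cvt}.

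First I would fix an isomorphism $f\colon A\to A'$ where $A' = M_{d_1}(\mathbb C)\times\cdots\times M_{d_r}(\mathbb C)$ (which exists by Wedderburn's theorem and the assumption that $A$ is semisimple over $\mathbb C$), and take $B'$ to be the disjoint union of the matrix-unit bases of the $M_{d_i}(\mathbb C)$. Combining Proposition \ref{p:direct.prod} with Proposition \ref{p:matrix.alg} gives
\[
\theta_{(A',B')} \;=\; \prod_{i=1}^r (-1)^{\binom{d_i}{2}} Q_i^{d_i},
\]
where $Q_i$ is the determinant of the generic $d_i\times d_i$ matrix formed from the variables $x^{(i)}_{jk}$ attached to the matrix units of the $i$-th block, and each $Q_i$ is irreducible and homogeneous of degree $d_i$. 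Next, let $P$ be the (invertible) matrix of $f$ and $\til f$ the induced linear change of variables. By Theorem \ref{t:cvt},
\[
\theta_{(A,B)} \;=\; \det(P)^2\,\til f^{-1}(\theta_{(A',B')}) \;=\; c\cdot \prod_{i=1}^r \bigl(\til f^{-1}(Q_i)\bigr)^{d_i}
\]
for some nonzero constant $c$. Since $\til f^{-1}$ is an invertible homogeneous linear change of variables, it preserves irreducibility, degree, and homogeneity; absorbing scalars I define $P_i := \til f^{-1}(Q_i)$ (up to a unit), and these are irreducible homogeneous polynomials of degree $d_i$ in $\{x_b\mid b\in B\}$.

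For the character recovery, the key observation is that in the matrix-unit basis $B'$, $\chi_i$ restricted to the $i$-th block is the trace functional $\sum_j x^{(i)}_{jj}$ and vanishes on the other blocks. The trace can be read off from $Q_i$: substituting $x^{(i)}_{jk}\mapsto x^{(i)}_{jk}+t\,\delta_{jk}$ and expanding gives $Q_i + t\,\chi_i + O(t^2)$, so $\chi_i$ (as a linear form in the basis $B'$) is the coefficient of $t$ in $Q_i$ after this shift and can be recovered from $Q_i$. Pulling back through $\til f^{-1}$, which is an explicit invertible linear substitution, recovers $\chi_i$ as a linear functional in the coordinates of $B$ directly from $P_i$.

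Distinctness of the $P_i$ is then immediate: if $P_i = P_j$ (up to a scalar), then by the recovery procedure $\chi_i$ and $\chi_j$ would be proportional, hence equal after normalization of degrees; but the characters $\chi_1,\dots,\chi_r$ of pairwise inequivalent irreducible representations of a semisimple $\mathbb C$-algebra are linearly independent, so they are pairwise distinct. I expect the main obstacle to be making the character-recovery step genuinely basis-intrinsic (rather than going through $B'$); the cleanest formulation is the shift-in-$t$ trick above, which is preserved under the linear change of variables $\til f^{-1}$ and thus gives a recipe to extract $\chi_i$ directly from $P_i$.
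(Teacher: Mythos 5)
Your overall strategy is exactly the paper's: pass to the Wedderburn decomposition $A\cong M_{d_1}(\mathbb C)\times\cdots\times M_{d_r}(\mathbb C)$, factor the paratrophic determinant there using Propositions~\ref{p:direct.prod} and~\ref{p:matrix.alg}, and pull back through the invertible linear substitution of Theorem~\ref{t:cvt}, which preserves homogeneity, degree and irreducibility. That part is fine.

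There is, however, a concrete error in your character-recovery step. After the shift $x^{(i)}_{jk}\mapsto x^{(i)}_{jk}+t\,\delta_{jk}$ you are computing $\det(X+tI)=t^{d_i}+\mathrm{tr}(X)\,t^{d_i-1}+\cdots+\det(X)$, so the coefficient of $t^1$ is the sum of the principal $(d_i-1)\times(d_i-1)$ minors of $X$ --- a form of degree $d_i-1$, not the linear trace form --- and your claimed expansion $Q_i+t\,\chi_i+O(t^2)$ is only correct when $d_i=2$ (and for $d_i=1$ the coefficient of $t$ is the constant $1$). The trace is the coefficient of $t^{d_i-1}$, and this is precisely what the paper extracts: it specializes the variables so that $P_k$ becomes the characteristic polynomial of $\rho^{(k)}(b)$ for a fixed $b\in B$ and reads $\chi_k(b)$ off the coefficient of $t^{d_k-1}$. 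Once you replace ``coefficient of $t$'' by ``coefficient of $t^{d_i-1}$'', your argument goes through and coincides with the paper's; your deduction of distinctness of the $P_i$ from the linear independence of the irreducible characters is also a legitimate (slightly more explicit) way to finish.
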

\begin{proof}
By Wedderburn-Artin theory, there is an isomorphism \[\rho\colon A\to M_{d_1}(\mathbb C)\times \cdots\times M_{d_r}(\mathbb C)\] given by $\rho(a) = (\rho^{(1)}(a),\ldots, \rho^{(r)}(a))$.   Take as a basis for $\prod_{i=1}^r M_{d_i}(K)$ the $E^{(k)}_{ij}$, with $1\leq i,j\leq d_k$ and $1\leq k\leq r$, and write $x_{ij}^{(k)}$ for the corresponding variable.  Note that $\til \rho$ is given by $\til\rho(x_b) = \sum_{i,j,k}\rho^{(k)}_{ij}(b)x^{(k)}_{ij}$.
Then by Proposition~\ref{p:direct.prod}, Theorem~\ref{t:cvt} and Proposition~\ref{p:matrix.alg}, we have that \[\theta_{(A,B)} = c\cdot \prod_{k=1}^r \til \rho\inv(\det [x_{ij}^{(k)}])^{d_k}= c\cdot \prod_{k=1}^r \left(\det [\til \rho\inv(x_{ij}^{(k)})]\right)^{d_k}\] where $c$ is a nonzero constant and each $P_k=\til \rho\inv (\det [x_{ij}^{(k)}])$ is an irreducible homogeneous polynomial of degree $d_k$ with the $P_k$ distinct.  Since the $\til\rho\inv(x^{(k)}_{ij})$, as $i,j,k$ vary form a basis for the space of homogeneous linear polynomials in $\mathbb C[X_B]$, if we fix $b\in B$,  we can find a specialization of $X_B$ so that, for all $k$,   $\til\rho\inv(x^{(k)}_{ii})$ becomes $t-\rho^{(k)}_{ii}(b)$ and $\til\rho\inv(x^{(k)}_{ij})$ becomes $-\rho^{(k)}_{ij}(b)$ for $i\neq j$.  Then   $P_k$ specializes to the characteristic polynomial of $\rho^{(k)}(b)$, for each $k$,  and hence we can recover $\chi_k(b)$ from the coefficient of $t^{d_k-1}$ of  $P_k$ under this specialization.	The final statement follows.  Of course, we can absorb a root of $c$ into $P_1$ to remove the constant.
\end{proof}

\subsection{The theorems of Frobenius and Wilf-Lindstr\"{o}m}
Next we recover the classical result of Frobenius on factoring the group determinant and a result of  Wilf-Lindstr\"{o}m on the determinant of a semilattice~\cite{Wilf,latticedet2}.  These are special cases of the factorization theorem for the semigroup determinant of an inverse semigroup, but we feel it is worth separating these results out since they will more likely be familiar to the reader and will thus better illustrate our techniques.

To make Corollary~\ref{c:ss.alg} explicit for groups, we describe more carefully the Wedderburn isomorphism, i.e., the Fourier transform. Let $G$ be a finite group, $\rho^{(1)},\ldots, \rho^{(r)}$ representatives of the equivalence classes of unitary irreducible representations of $G$ and $d_k$ the degree of $\rho^{(k)}$.
By Maschke's theorem, the group algebra $\mathbb CG$ of a finite group is semisimple, and the number of conjugacy classes of $G$ is $r$.  We retain the notation from the proof of Corollary~\ref{c:ss.alg}.

\begin{Prop}\label{p:reverse}
Let $\rho\colon \mathbb CG\to M_{d_1}(\mathbb C)\times \cdots\times M_{d_r}(\mathbb C)$ be given by $\rho(g) = (\rho^{(1)}(g),\ldots, \rho^{(r)}(g))$ for $g\in G$. Let $P$ be the matrix of $\rho$ with respect to the bases $G$ and $E^{(k)}_{ij}$.
\begin{enumerate}
\item  $\displaystyle{\til\rho\inv (x^{(k)}_{ij})=\dfrac{d_k}{|G|}\sum_{g\in G}\overline{\rho^{(k)}_{ij}}(g)x_g}$.
\item Viewing, the rows and columns of $PP^T$ as indexed by $i,j,k$, \[PP^T_{ijk,i'j'k'}=\begin{cases} \dfrac{|G|}{d_k}, & \text{if}\ i=i', j=j', \rho^{(k')} = \ov {\rho^{(k)}}\\ 0, & \text{else.} \end{cases}\]
\end{enumerate}
\end{Prop}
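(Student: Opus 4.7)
The plan is to compute the matrix $P$ explicitly in terms of the matrix coefficients $\rho^{(k)}_{ij}$ and then exploit the Schur orthogonality relations to evaluate all the matrix products that appear. Concretely, with respect to the indicated bases $P$ has $\bigl((i,j,k),g\bigr)$-entry equal to $\rho^{(k)}_{ij}(g)$, so $P$ is square of size $|G|=\sum_k d_k^2$.

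For part (1), I would first compute $P\overline{P}^T$. Its $\bigl((i,j,k),(i',j',k')\bigr)$-entry is
\[\sum_{g\in G}\rho^{(k)}_{ij}(g)\,\overline{\rho^{(k')}_{i'j'}(g)},\]
which by the Schur (grand) orthogonality relations for matrix coefficients of unitary irreducibles equals $(|G|/d_k)\delta_{kk'}\delta_{ii'}\delta_{jj'}$. Hence $P\overline{P}^T = D$, where $D$ is the diagonal matrix with $(ijk,ijk)$-entry $|G|/d_k$. Inverting gives $P\inv = \overline{P}^T D\inv$, so that $(P\inv)_{g,(i,j,k)} = (d_k/|G|)\overline{\rho^{(k)}_{ij}(g)}$. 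By Theorem~\ref{t:cvt}, the algebra homomorphism $\til\rho\inv$ sends each generator $x^{(k)}_{ij}$ to $\sum_g (P\inv)_{g,(i,j,k)}\,x_g$, which is exactly the claimed formula.

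For part (2), I would apply the same orthogonality tool but pair $\rho^{(k)}_{ij}(g)$ with $\rho^{(k')}_{i'j'}(g)$ itself. First fix the (harmless) convention that the set $\{\rho^{(1)},\ldots,\rho^{(r)}\}$ of unitary representatives is closed under complex conjugation, since $\overline{\rho^{(k)}}$ is itself a unitary irreducible representation; write $\rho^{(k^\ast)} := \overline{\rho^{(k')}}$ for the corresponding index. Then $\rho^{(k')}_{i'j'}(g) = \overline{\rho^{(k^\ast)}_{i'j'}(g)}$, and therefore
\[(PP^T)_{(i,j,k),(i',j',k')} = \sum_{g\in G}\rho^{(k)}_{ij}(g)\,\overline{\rho^{(k^\ast)}_{i'j'}(g)} = \frac{|G|}{d_k}\delta_{kk^\ast}\delta_{ii'}\delta_{jj'}\]
by Schur orthogonality. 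The condition $k=k^\ast$ is precisely $\rho^{(k')}=\overline{\rho^{(k)}}$, yielding (2).

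The only real obstacle is the bookkeeping around conjugate versus contragredient representations: one must commit to a choice of unitary representatives so that $\overline{\rho^{(k)}}$ is literally one of the $\rho^{(j)}$ rather than only equivalent to one, so that Schur orthogonality applies on the nose to the matrix coefficients rather than up to an intertwining matrix. Once that convention is fixed, both parts reduce to direct applications of the first orthogonality relation.
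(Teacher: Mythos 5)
Your proposal is correct and follows essentially the same route as the paper: part (2) is the identical computation via Schur orthogonality, and for part (1) you simply unpack the Fourier inversion theorem (which the paper cites from Serre) by inverting $P$ directly from the relation $P\overline{P}^T=D$. Your remark that the representatives must be chosen so that $\overline{\rho^{(k)}}$ is literally among them is a worthwhile clarification of a convention the paper leaves implicit in the condition $\rho^{(k')}=\overline{\rho^{(k)}}$.
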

\begin{proof}
The first item is an immediate consequence of the Fourier inversion theorem~\cite[Proposition~6.2.11]{serrerep}, which shows that the preimage under $\rho$ of $E^{(k)}_{ij}$ is $\dfrac{d_k}{|G|}\sum_{g\in G}\overline{\rho^{(k)}_{ij}}(g)g$.

For the second item, we observe that \[PP^T_{ijk,i'j'k'} = \sum_{g\in G}\rho^{(k)}_{ij}(g)\rho^{(k')}_{i'j'}(g).\] By the orthogonality relations for the matrix entries of irreducible unitary representations~\cite[Section~2.2]{serrerep}, this sum is $0$ unless $i=i'$, $j=j'$ and $\rho^{(k')}$ is the conjugate representation of $\rho^{(k)}$, in which case the result is $|G|/d_k$.
\end{proof}

\begin{Cor}[Frobenius]\label{c:group.case}
Let $G$ be a finite group with $r$ conjugacy classes and let $\rho^{(1)},\ldots, \rho^{(r)}$ be a complete set of irreducible unitary representations of $G$ with $\rho^{(k)}$ of degree $d_k$.  Then \[\theta_G = \pm \prod_{k=1}^r \left(\det \left[\sum_{g\in G}\rho^{(k)}_{ij}(g)x_g\right]\right)^{d_k}\] is the factorization into irreducible polynomials.  The character of $\rho^{(k)}$ is determined by the irreducible polynomial $P_k=\det [\sum_{g\in G}\rho^{(k)}_{ij}(g)x_g]$.
\end{Cor}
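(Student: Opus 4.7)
The plan is to apply Corollary~\ref{c:ss.alg} to $A=\mathbb{C}G$ with distinguished basis $B=G$. Since $\mathbb{C}G$ is semisimple by Maschke's theorem, with $\rho^{(1)},\dots,\rho^{(r)}$ as listed, that corollary immediately gives $\theta_G = c\prod_{k=1}^r Q_k^{d_k}$ for a nonzero constant $c$, where each $Q_k = \til\rho\inv(\det[x^{(k)}_{ij}]) = \det[\til\rho\inv(x^{(k)}_{ij})]$ is an irreducible homogeneous polynomial of degree $d_k$, the $Q_k$ are pairwise distinct, and $\chi_k$ is recoverable from $Q_k$ by the specialization used there.

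Next I would plug in the explicit Fourier inversion from Proposition~\ref{p:reverse}(1), $\til\rho\inv(x^{(k)}_{ij}) = (d_k/|G|)\sum_{g\in G}\overline{\rho^{(k)}_{ij}}(g)\,x_g$, and factor the scalar out of the $d_k\times d_k$ determinant to obtain $Q_k = (d_k/|G|)^{d_k}R'_k$, where $R'_k = \det[\sum_g\overline{\rho^{(k)}_{ij}}(g)\,x_g]$. Complex conjugation is a degree-preserving involution on equivalence classes of irreducible unitary representations of $G$, so there is a permutation $\sigma$ of $\{1,\dots,r\}$ with $\overline{\rho^{(k)}}\sim \rho^{(\sigma(k))}$ and $d_k = d_{\sigma(k)}$. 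If $\overline{\rho^{(k)}} = T\inv\rho^{(\sigma(k))} T$, then $\sum_g\overline{\rho^{(k)}}(g)\,x_g = T\inv\bigl(\sum_g \rho^{(\sigma(k))}(g)\,x_g\bigr)T$, so $R'_k = R_{\sigma(k)}$ where $R_k := \det[\sum_g\rho^{(k)}_{ij}(g)\,x_g]$. Reindexing over $\sigma$ yields $\prod_k (R'_k)^{d_k} = \prod_k R_k^{d_k}$, and therefore $\theta_G = c'\prod_k R_k^{d_k}$ for a single overall nonzero scalar $c'$.

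To force $c'=\pm 1$, I would specialize at $x_e = 1$ and $x_g = 0$ for $g\neq e$: the Cayley matrix $C(G)_{s,t} = x_{st}$ collapses to the permutation matrix of the involution $s\mapsto s\inv$, so $\theta_G$ specializes to $\pm 1$, while each $R_k$ specializes to $\det[\rho^{(k)}_{ij}(e)]=\det I_{d_k}=1$; hence $c'=\pm 1$. Irreducibility and pairwise distinctness of the $R_k$ are inherited from the $Q_k$, to which they are proportional by nonzero scalars. For the character statement, I would mimic the specialization in the proof of Corollary~\ref{c:ss.alg}: for each $h\in G$, setting $x_e = t$, $x_h = -1$, and $x_g = 0$ otherwise turns $\sum_g \rho^{(k)}_{ij}(g)\,x_g$ into $tI_{d_k}-\rho^{(k)}(h)$, so $R_k$ specializes to the characteristic polynomial of $\rho^{(k)}(h)$, from whose coefficient of $t^{d_k-1}$ one reads off $-\chi_k(h)$ (while $\chi_k(e)=d_k$ is just the degree of $R_k$). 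The main obstacle will be the bookkeeping in the middle step: one must verify that the constants $(d_k/|G|)^{d_k}$ from each factor together with the reindexing by the conjugation permutation $\sigma$ collapse cleanly into the single overall scalar $c'$ that is then pinned to $\pm 1$ by the identity-point specialization.
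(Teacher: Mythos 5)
Your proof is correct, and its skeleton matches the paper's: both pass through Corollary~\ref{c:ss.alg} (equivalently Theorem~\ref{t:cvt}, Proposition~\ref{p:direct.prod} and Proposition~\ref{p:matrix.alg}), insert the Fourier inversion formula of Proposition~\ref{p:reverse}(1), and absorb the complex-conjugation of matrix entries by reindexing over the degree-preserving involution $\rho^{(k)}\mapsto\overline{\rho^{(k)}}$ on irreducibles. Where you genuinely diverge is in pinning the overall scalar to $\pm 1$. The paper does this by computing $\det(P)^2=\det(PP^T)=\pm|G|^{|G|}/\prod_k d_k^{d_k^2}$ from the orthogonality relations (Proposition~\ref{p:reverse}(2)) and observing that this exactly cancels the scalars $\prod_k(d_k/|G|)^{d_k^2}$ extracted from the determinants, using $\sum_k d_k^2=|G|$. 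You instead evaluate both sides at $x_e=1$, $x_g=0$ for $g\neq e$: the Cayley matrix becomes the permutation matrix of $s\mapsto s\inv$ with determinant $\pm1$, while each $R_k$ becomes $\det I_{d_k}=1$, so the constant is $\pm1$ with no bookkeeping at all. Your route is more elementary and self-correcting (it makes the "main obstacle" you worried about evaporate, and renders Proposition~\ref{p:reverse}(2) unnecessary for this corollary); the paper's route has the side benefit of recording the explicit value of $\det(P)^2$ for the Fourier transform. One small remark: for pairwise distinctness of the $R_k$ you need non-associateness rather than literal inheritance from the $Q_k$, but your own character-recovery specialization (each $R_k$ specializes to the monic characteristic polynomial of $\rho^{(k)}(h)$, hence determines $\chi_k$) already supplies this, since distinct irreducibles have distinct characters.
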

\begin{proof}
We retain the notation of Proposition~\ref{p:reverse}. We shall freely use that $d_1^2+\cdots + d_r^2=|G|$. Note that Proposition~\ref{p:reverse}(2) implies that $\det(P)^2 =\det(PP^T) = \pm\dfrac{|G|^{|G|}}{d_1^{d_1^2}\cdots d_r^{d_r^2}}$.  Then applying Theorem~\ref{t:cvt}, Proposition~\ref{p:direct.prod}, Proposition~\ref{p:matrix.alg}, Proposition~\ref{p:reverse}(1) and that $\det [x^{(k)}_{ij}]$ is homogeneous of degree $d_k$, we see that \[\theta_G = \pm \left(\det \left[\sum_{g\in G}\rho^{(k)}_{ij}(g)x_g\right]\right)^{d_k}\] is the factorization is the factorization into irreducibles.   The second statement follows by computing $\chi(g)$ with $g\neq 1$ by looking at the coefficient of $t^{d_k-1}$ after substituting into $P_k$ the values $x_1=t$, $x_g=-1$ and setting all other variables to $0$ (and, of course, $\chi(1)=\deg P_k$).
\end{proof}

The special case where $G$ is abelian was first obtained by Dedekind.  In this case, the formula simplifies to $\theta_G = \pm\prod_{\chi\in \wh G}\left(\sum_{g\in G}\chi(g)x_g\right)$ where $\wh G$ is the set of characters of $G$.

If $P$ is a finite poset, the \emph{zeta-function} $\zeta_P$ is the $P\times P$ integer matrix with
\[\zeta_P(x,y) = \begin{cases} 1, & \text{if}\ x\leq y\\ 0, & \text{else.}\end{cases}\]  Note that $\zeta$ is upper triangular with ones on the diagonal with respect to any linear order extending $P$ and hence is unimodular.  Therefore, $\zeta_P$ has an inverse over the integers called the \emph{M\"obius} function $\mu_P$ of $P$. We shall drop the subscript $P$ when the poset $P$ is clear from context.

Solomon proved the following theorem (see~\cite[Theorem~1]{Burnsidealgebra} and the remark following it).

\begin{Thm}[Solomon]
Let $L$ be a finite meet semilattice, made into a semigroup via meet.  Then $Z\colon \mathbb CL\to \mathbb C^L$ given by $Z(a) = \sum_{b\leq a}\delta_b$ is an isomorphism where $\delta_b$ is the indicator function of $\{b\}$.	
\end{Thm}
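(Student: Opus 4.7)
The plan is to verify that $Z$ is a $\mathbb C$-algebra isomorphism by separately establishing bijectivity and multiplicativity on the basis $L$; both boil down to direct computations once the pointwise product structure on $\mathbb C^L$ is made explicit.

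For bijectivity, I would note that $\{\delta_b \mid b\in L\}$ is a $\mathbb C$-basis of $\mathbb C^L$, and write out the matrix of $Z$ with respect to the bases $L$ and $\{\delta_b\}_{b\in L}$. Its $(b,a)$-entry is $1$ if $b\leq a$ and $0$ otherwise; that is, the matrix is precisely the zeta matrix $\zeta_L$ discussed just before the theorem. Since $\zeta_L$ is unimodular (upper triangular with ones on the diagonal relative to any linear extension of $L$), $Z$ is a linear isomorphism.

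For multiplicativity, I would use that in $\mathbb C^L$ with pointwise multiplication we have $\delta_{b_1}\delta_{b_2} = \delta_{b_1}$ if $b_1 = b_2$ and $0$ otherwise. Then for $a_1, a_2 \in L$,
\[
Z(a_1)Z(a_2) = \sum_{\substack{b_1\leq a_1\\ b_2\leq a_2}}\delta_{b_1}\delta_{b_2} = \sum_{\substack{b\leq a_1\\ b\leq a_2}}\delta_b = \sum_{b\leq a_1\wedge a_2}\delta_b = Z(a_1 a_2),
\]
where the penultimate equality is the universal property of the meet as the greatest lower bound. Thus $Z$ is multiplicative on $L$, and since it is $\mathbb C$-linear by definition, it is a $\mathbb C$-algebra homomorphism, hence an isomorphism.

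There is no real obstacle here: the whole content is the observation that pointwise multiplication of indicator functions corresponds to intersection of down-sets, which in a meet semilattice is computed by the meet. The only mild subtlety is recognizing that the matrix of $Z$ with respect to $L$ and $\{\delta_b\}$ is exactly $\zeta_L$, but this is immediate from the definition $Z(a) = \sum_{b\leq a}\delta_b$.
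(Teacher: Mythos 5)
Your proof is correct and is the standard argument; the paper itself does not prove this statement but simply cites Solomon (\cite[Theorem~1]{Burnsidealgebra}), and your verification---bijectivity via the unimodularity of $\zeta_L$ and multiplicativity via $\delta_{b_1}\delta_{b_2}=0$ for $b_1\neq b_2$ together with the fact that $b\leq a_1$ and $b\leq a_2$ if and only if $b\leq a_1\wedge a_2$---is exactly what is needed and is consistent with how the paper uses the result (e.g.\ in deriving the Wilf--Lindstr\"om theorem from Theorem~\ref{t:cvt}).
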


Notice that if $A=\mathbb C^L$ and $B=\{\delta_a\mid a\in L\}$, then $C(A,B)$ is the diagonal matrix with the $x_{\delta_a}$ on the diagonal.
Since the matrix of $Z$  with respect to the bases $L$ and $B$ is $\zeta_L$ (after making the obvious identification of $L$ and $B$), which is unimodular, and the inverse of $\zeta_L$ is $\mu_L$, whence $\til\zeta_L\inv (x_{\delta_a}) = \sum_{b\leq a}\mu_L(b,a)x_b$, we immediately obtain the Wilf-Lindstr\"om theorem~\cite{Wilf,latticedet2} from Theorem~\ref{t:cvt} and Solomon's theorem.

\begin{Thm}[Wilf-Lindstr\"{o}m]
Let $L$ be a finite semilattice.  Then \[\theta_L = \prod_{a\in L}\left(\sum_{b\leq a}\mu_L(b,a)x_b\right).\]	
\end{Thm}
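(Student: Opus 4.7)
The plan is to apply the change of basis formula (Theorem~\ref{t:cvt}) to Solomon's isomorphism $Z\colon \mathbb CL\to \mathbb C^L$, using as the target basis $B'=\{\delta_a\mid a\in L\}$. First I would observe that the algebra $\mathbb C^L$ is a product of copies of $\mathbb C$ with $\delta_a\delta_b=\delta_{a,b}\delta_a$, so $C(\mathbb C^L,B')$ is the diagonal matrix whose $(a,a)$-entry is $x_{\delta_a}$, giving immediately
\[\theta_{(\mathbb C^L,B')} = \prod_{a\in L}x_{\delta_a}.\]

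Next I would identify the matrix $P$ of $Z$ with respect to the bases $L$ and $B'$. Since $Z(a)=\sum_{b\leq a}\delta_b$, the entry $P_{\delta_b,a}$ is $1$ if $b\leq a$ and $0$ otherwise, i.e., $P$ is exactly the zeta matrix $\zeta_L$ after the evident identification of $B'$ with $L$. Because $\zeta_L$ is upper unitriangular with respect to any linear extension of the partial order, $\det(P)=\pm 1$, so $\det(P)^2=1$ and no spurious scalar is introduced by Theorem~\ref{t:cvt}. The inverse $P^{-1}$ is by definition the M\"obius matrix $\mu_L$, and so the induced substitution $\til Z\inv$ on variables is
\[\til Z\inv(x_{\delta_a}) = \sum_{b\leq a}\mu_L(b,a)x_b.\]

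Finally, I would invoke Theorem~\ref{t:cvt}, which gives $\theta_L = \det(P)^2\,\til Z\inv(\theta_{(\mathbb C^L,B')})$. Since $\til Z\inv$ is a ring homomorphism applied to a product of variables, this evaluates to
\[\theta_L = \prod_{a\in L}\til Z\inv(x_{\delta_a}) = \prod_{a\in L}\left(\sum_{b\leq a}\mu_L(b,a)x_b\right),\]
which is exactly the asserted factorization.

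There is no serious obstacle; the only thing requiring a moment of care is keeping track of the direction of the matrix $P$ and its inverse, and the order of evaluation in $\til Z\inv$. All of the combinatorial content is absorbed into the single fact that $\mu_L=\zeta_L\inv$, while all of the algebraic content has already been packaged into Solomon's theorem and the change of basis formula.
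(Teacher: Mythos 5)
Your proof is correct and follows exactly the paper's route: Solomon's isomorphism, the observation that the Cayley table of $\mathbb C^L$ with respect to the basis of indicator functions is diagonal, and the change of basis formula with $P=\zeta_L$ unimodular and $P^{-1}=\mu_L$. Nothing to add.
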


As a corollary, we give the computation for the Smith determinant.   As usual,  $\phi$ denotes Euler's totient function. Recall that $n=\sum_{d\mid n}\phi(n)$ by counting elements of $\mathbb Z/n\mathbb Z$ by the cardinality of the subgroup they generate.  Hence, by M\"obius inversion, $\phi(n) = \sum_{d\mid n}\mu(n/d)d$ where $\mu$ is the number theoretic M\"obius function.  Also, we recall~\cite[Page~119]{Stanley} that for the divisibility order, $\mu(a,b) = \mu(b/a)$; this is from whence the M\"obius function of a poset gets its name.

\begin{Cor}[Smith's determinant]
The determinant of the $n\times n$ matrix $A$ with $A_{ij} = \gcd(i,j)$ is $\phi(1)\phi(2)\cdots\phi(n)$.
\end{Cor}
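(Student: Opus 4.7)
The plan is to deduce Smith's determinant directly from the Wilf-Lindstr\"om theorem by specializing the variables.

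First, I would identify the semilattice structure: the set $L=\{1,\ldots,n\}$ under $\gcd$ is a meet semilattice whose associated partial order is divisibility, i.e., $a\leq b$ if and only if $a\mid b$, and then $a\wedge b=\gcd(a,b)$. The generic semigroup determinant $\theta_L$ is a polynomial in variables $x_1,\ldots,x_n$, and the semigroup Cayley table $C(L)_{i,j}=x_{\gcd(i,j)}$ becomes the matrix $A$ with entries $A_{ij}=\gcd(i,j)$ precisely under the specialization $x_i\mapsto i$. So it suffices to evaluate $\theta_L$ at this specialization.

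Next, I would apply the Wilf-Lindstr\"om theorem to write
\[\theta_L=\prod_{a=1}^{n}\left(\sum_{b\mid a}\mu_L(b,a)\,x_b\right).\]
Under the specialization $x_b\mapsto b$, the $a$-th factor becomes $\sum_{b\mid a}\mu_L(b,a)\,b$. Using the fact recalled in the excerpt that, for the divisibility poset, $\mu_L(b,a)=\mu(a/b)$ where $\mu$ is the number-theoretic M\"obius function, each factor equals $\sum_{b\mid a}\mu(a/b)\,b$, which by the M\"obius inversion identity $\phi(a)=\sum_{d\mid a}\mu(a/d)d$ (also recalled in the excerpt) is exactly $\phi(a)$.

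Multiplying across $a=1,\ldots,n$ yields $\det A=\phi(1)\phi(2)\cdots\phi(n)$. There is essentially no obstacle here; the only point requiring a sentence of care is that the semigroup determinant evaluated at $x_i=i$ really does produce the matrix $[\gcd(i,j)]$ (which is immediate from the definition of $C(L)$), and that one matches the two versions of the M\"obius function correctly on the divisibility order.
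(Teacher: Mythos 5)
Your proposal is correct and follows exactly the paper's own route: identify $\{1,\ldots,n\}$ under $\gcd$ as the meet semilattice for the divisibility order, specialize $x_b\mapsto b$ in the Wilf--Lindstr\"om factorization, and use $\mu_L(b,a)=\mu(a/b)$ together with $\phi(a)=\sum_{d\mid a}\mu(a/d)d$ to identify each factor as $\phi(a)$. No differences worth noting.
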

\begin{proof}
Note that $L=\{1,\ldots, n\}$ is a meet semilattice under the divisibility ordering with $\gcd$ as the meet and $A$ is obtained from $C(L)$ by substituting $x_i$ by $i$.  Thus the Wilf-Lindstr\"om theorem yields \[\det A=\prod_{i=1}^n\left(\sum_{d\mid i}\mu(d,i)d\right)= \phi(1)\cdots\phi(n)\] by the above discussion.
\end{proof}

\section{Inverse semigroups and groupoids}

We factor in this section the semigroup determinant of an inverse semigroup.  Inverse semigroups have semisimple algebras over the complex numbers, so Corollary~\ref{c:ss.alg} applies, but we would like to be more explicit, as in the group case.   Note that there are nice Fourier transforms for inverse semigroups~\cite{Malandro2}.  Here we show that the inverse semigroup determinant can be computed as a groupoid determinant.  The orthogonality relations and Fourier inversion theorems can be made to work for finite groupoids (where unitary representations are replaced by an appropriate groupoid analogue) but I do not know a reference for this folklore stuff and so we do not make things quite as explicit as could be done, except in a remark.  However, we do make the factorization completely explicit in the commutative case.

\subsection{Inverse semigroups}
An \emph{inverse semigroup} $S$ is a semigroup such that, for all $s\in S$, there is a unique element $s^*\in S$ such that $ss^*s=s$ and $s^*ss^*=s^*$.  For example, groups are inverse semigroups (where $g^*=g^{-1}$) and meet semilattices are examples (where $x^*=x$).  Thus a factorization result for the semigroup determinant of an inverse semigroup yields a simultaneous generalization of Frobenius's result for groups and the Wilf-Lindstr\"om result for semilattices.  Another important example of an inverse semigroup is the \emph{symmetric inverse monoid} $I_X$ of all partial bijections of a set $X$.  When $|X|=n$, then $I_X$ can be identified with the inverse monoid of all $n\times n$ $0/1$-matrices with at most one $1$-entry in any row or column; this latter monoid is known in some circles as the rook monoid~\cite{Solomonrook}, since such matrices correspond to legal rook placements on an $n\times n$ chessboard (where the $1$s are the rooks).  A good reference for inverse semigroup theory is the book of Lawson~\cite{Lawson}.

Let $E(S)$ denote the set of idempotents of $S$.  In an inverse semigroup, the idempotents commute and hence form a commutative subsemigroup~\cite{Lawson}.
On any inverse semigroup $S$, there is a natural partial order, compatible with multiplication, given by $s\leq t$ if $s=te$ for some $e\in E(S)$. 
See~\cite{Lawson} for details. 

To any inverse semigroup is associated a groupoid $\mathcal G(S)$.  One puts $\mathcal G(S)_0=E(S)$ and $\mathcal G(S)_1=S$.  The domain and range functions are defined by $\dom (s)=s^*s$ and $\ran (s) =ss^*$.  The identities are the elements of $E(S)$, the product in $\mathcal G(S)$ is just the restriction of the product in $S$ to composable pairs and the groupoid inversion is  $s\mapsto s^*$.  The fact that this is a groupoid can be found in~\cite{Lawson} or~\cite[Section 9.1]{repbook}.  
Notice that $\mathcal G(S)$ is finite if and only if $S$ is finite.  It is well known~\cite{Lawson}, that the idempotents of $S$ are central if and only if $\dom(s)=\ran(s)$ for all $s\in S$; such inverse semigroups are called Clifford semigroups~\cite{Clifford}.

If $e\in E(S)$, the automorphism group $G_e=\mathcal G(S)(e,e)$ is called the \emph{maximal subgroup} of $G$ at $e$.  It is the group of units of the monoid $eSe$.

The author proved the following theorem in~\cite{mobius2} (see also~\cite{mobius1,repbook}), generalizing Solomon's theorem for semilattices~\cite{Burnsidealgebra}. 

\begin{Thm}\label{t:algebra.isom}
Let $S$ be a finite inverse semigroup and $K$ a field.  Then the mapping $Z\colon KS\to K\mathcal G(S)$ given by $Z(s) = \sum_{t\leq s}t$ for $s\in S$ is an isomorphism of $K$-algebras. In particular, $KS$ is unital.
\end{Thm}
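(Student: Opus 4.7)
The plan is to prove separately that $Z$ is a linear bijection and that it is multiplicative; the ``in particular'' part is automatic because $K\mathcal G(S)$ is unital with identity $\sum_{e \in E(S)} e$.

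First I would observe that with respect to the basis $S$ of $KS$ and the basis $\mathcal G(S)_1 = S$ of $K\mathcal G(S)$, the matrix of $Z$ is the zeta matrix of the natural partial order $\leq$ on $S$. Choosing any linear extension of $\leq$ makes this matrix unitriangular, hence unimodular, so $Z$ is a $K$-linear isomorphism for any field $K$.

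The real content is showing $Z(s)\cdot Z(t) = Z(st)$ for all $s,t \in S$, where $\cdot$ denotes the groupoid product in $K\mathcal G(S)$. The key preliminary observation (which I would verify using only the commutativity of $E(S)$ and the identity $ss^*s=s$) is that $u \leq s$ if and only if $u = se$ for a unique idempotent $e \leq s^*s$, namely $e = u^*u$. This gives a bijection $\{u : u \leq s\} \leftrightarrow \{e \in E(S) : e \leq s^*s\}$, so
\[
Z(s) = \sum_{e \leq s^*s} se, \qquad Z(t) = \sum_{f \leq tt^*} ft, \qquad Z(st) = \sum_{g \leq t^*s^*st} stg.
\]
Next I would compute that $\dom(se) = (se)^*(se) = es^*se = e$ (using $e \leq s^*s$) and similarly $\ran(ft) = f$. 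Hence $se$ and $ft$ are composable in $\mathcal G(S)$ exactly when $e = f$, and in that case their groupoid product is $set$. Therefore
\[
Z(s)\cdot Z(t) = \sum_{e \leq s^*s,\; e \leq tt^*} set = \sum_{e \leq s^*s\cdot tt^*} set.
\]

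The main obstacle, and the step requiring care, is matching this sum with $Z(st) = \sum_{g \leq t^*s^*st} stg$. I would introduce the map $e \mapsto t^*et$ from $\{e : e \leq s^*s \cdot tt^*\}$ to $\{g : g \leq t^*s^*st\}$ with inverse $g \mapsto tgt^*$, and verify three things, each of which reduces to a short manipulation using that idempotents commute and that $e \leq tt^*$ and $g \leq t^*t$: (i) the maps land in the claimed sets and produce idempotents; (ii) they are mutually inverse (so $t(t^*et)t^* = (tt^*)e(tt^*) = e$ and symmetrically); (iii) the corresponding summands agree, namely $st \cdot (t^*et) = s(tt^*)et = set$. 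This bijection identifies the two sums term-by-term, completing the proof that $Z$ is multiplicative and therefore a $K$-algebra isomorphism.
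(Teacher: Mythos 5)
Your proof is correct, and every step checks out: the unitriangularity of the zeta matrix gives bijectivity, the parametrization $u\leq s \Leftrightarrow u=se$ with $e=u^*u\leq s^*s$ is valid, and the bijection $e\mapsto t^*et$ between $\{e\leq s^*s\cdot tt^*\}$ and $\{g\leq t^*s^*st\}$ correctly matches the summands $set = st(t^*et)$. The paper does not reprove this theorem (it cites~\cite{mobius2}), but your argument is essentially the standard one given there, so no further comment is needed.
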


The idea to compute the semigroup determinant of an inverse semigroup $S$ is to first use Theorem~\ref{t:algebra.isom} to reduce to the case of a groupoid, which is then a semisimple algebra and so can be handled by Corollary~\ref{c:ss.alg}.

\subsection{Factoring the groupoid determinant}
To factor the groupoid determinant, we use Corollary~\ref{c:ss.alg}.
First we comment on the well-known structure of a groupoid algebra; see~\cite[Theorem~8.15]{repbook} or~\cite{ringoids}.

\begin{Thm}\label{t:gpd.as.matrices}
Let $\mathcal G$ be a finite groupoid and let $c_1,\ldots, c_k\in \mathcal G_0$ be a complete set of representatives of the isomorphism classes of objects.  Suppose that the class of $c_i$ has $n_i$ elements and let $G_i=\mathcal G(c_i,c_i)$ be the automorphism group at $c_i$.  Then, for any field $K$, there is an isomorphism $K\mathcal G\cong \prod_{i=1}^k M_{n_i}(KG_i)$.  In particular, $K\mathcal G$ is a semisimple algebra if and only if the characteristic of $K$ does not divide the order of any of the groups $G_i$.
\end{Thm}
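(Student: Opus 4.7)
The plan is to reduce the problem in two steps: first break $\mathcal G$ into its connected components, and then identify each component's algebra with a matrix algebra over its automorphism group ring.

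First I would observe that if $\mathcal G_i$ denotes the full subgroupoid of $\mathcal G$ on the isomorphism class of $c_i$, then $\mathcal G_1,\ldots,\mathcal G_k$ partition $\mathcal G_1$ as a set of arrows, and any two arrows in different components compose to zero in $K\mathcal G$. Hence $K\mathcal G = K\mathcal G_1 \oplus \cdots \oplus K\mathcal G_k$ as two-sided ideals, giving $K\mathcal G \cong \prod_{i=1}^k K\mathcal G_i$. This reduces everything to the case of a connected groupoid.

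So fix a connected groupoid $\mathcal H$ with objects $d_1,\ldots,d_n$, take $G=\mathcal H(d_1,d_1)$, and for each $j$ pick an arrow $p_j\colon d_1\to d_j$ with $p_1=1_{d_1}$. The key step is that every arrow $f\colon d_i\to d_j$ of $\mathcal H$ can be written uniquely as $f = p_j\, g\, p_i^{-1}$ with $g\in G$, since $g=p_j^{-1}fp_i$ is forced. I would define a linear map $\Phi\colon K\mathcal H\to M_n(KG)$ by sending such an arrow $f$ to $g\cdot E_{ji}$, where $E_{ji}$ is the matrix unit. To check multiplicativity, take arrows $f'=p_{j'}g'p_{i'}^{-1}$ and $f=p_jgp_i^{-1}$; then $f'f$ is defined iff $i'=j$, in which case $f'f = p_{j'}(g'g)p_i^{-1}$, matching $(g'E_{j'i'})(gE_{ji}) = g'g\, E_{j'i}\delta_{i',j}$. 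Since $\Phi$ is a bijection on bases, it is a $K$-algebra isomorphism; this gives $K\mathcal H\cong M_n(KG)$. Combining with the product decomposition yields $K\mathcal G\cong \prod_{i=1}^k M_{n_i}(KG_i)$.

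For the semisimplicity statement, I would invoke two standard facts: Maschke's theorem (over a field $K$, the group algebra $KG_i$ is semisimple iff $\operatorname{char}K\nmid |G_i|$) and Morita invariance (or the direct fact that $M_n(R)$ is semisimple iff $R$ is semisimple). A finite direct product is semisimple iff each factor is, so $K\mathcal G$ is semisimple iff every $KG_i$ is, iff $\operatorname{char}K$ divides no $|G_i|$.

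I do not expect any serious obstacles here; the only point requiring care is the bookkeeping of indices when verifying that $\Phi$ respects multiplication, because one has to track the convention that $E_{ji}$ is the matrix unit corresponding to an arrow from $d_i$ to $d_j$ (and hence transposes in composition order). A small subtlety is that the isomorphism $\Phi$ depends on the choice of the $p_j$, but the isomorphism class of the target does not, so the statement of the theorem is basis-independent.
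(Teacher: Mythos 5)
Your proof is correct, and it is the standard argument: the paper itself does not prove this theorem but cites it as well known (referring to \cite[Theorem~8.15]{repbook} and \cite{ringoids}), and the proof given in those sources is exactly your two-step reduction --- split off the connected components as a direct product of two-sided ideals, then trivialize each component via a choice of arrows $p_j\colon d_1\to d_j$ to obtain $K\mathcal H\cong M_n(KG)$, with Maschke plus Morita invariance handling the semisimplicity claim. Your index bookkeeping in the multiplicativity check is right, so there is nothing to add.
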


The isomorphism in Theorem~\ref{t:gpd.as.matrices} is explicit.
Our takeaway from this is that $\mathbb C\mathcal G$ is semisimple.   For example, if $L$ is a meet semilattice, then $\mathcal G(L)$ consists of only identity morphisms and so $K\mathcal G(L)\cong K^L$.

\begin{Thm}\label{t:gpd.determine}
Let $\mathcal G$ be a finite groupoid.  Then $\theta_{\mathcal G} = \prod_{i=1}^r P_i^{d_i}$ where the $P_i$ are distinct irreducible homogeneous polynomials of degree $d_i$.   Here $r$ is the number of isomorphism classes of simple $\mathbb C\mathcal G$-modules, the $d_i$ are the dimensions of the simple modules and $P_i$ determines the character of the corresponding simple module.
\end{Thm}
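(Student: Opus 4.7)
The plan is to apply Corollary~\ref{c:ss.alg} directly to the based algebra $(\mathbb C\mathcal G, \mathcal G_1)$, so the task reduces to verifying that $\mathbb C\mathcal G$ is semisimple and then unpacking Corollary~\ref{c:ss.alg} in this setting. First, I would invoke Theorem~\ref{t:gpd.as.matrices} to write $\mathbb C\mathcal G\cong \prod_{i=1}^k M_{n_i}(\mathbb C G_i)$, where the $G_i$ are the maximal subgroups at representatives of the isomorphism classes of objects. Since each $\mathbb C G_i$ is semisimple by Maschke's theorem (we are in characteristic zero, which cannot divide $|G_i|$), and matrix algebras and finite direct products of semisimple algebras are semisimple, $\mathbb C\mathcal G$ is itself semisimple.

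Once semisimplicity is established, Corollary~\ref{c:ss.alg} applied to the based algebra $(\mathbb C\mathcal G, \mathcal G_1)$ immediately yields $\theta_{\mathcal G} = \prod_{i=1}^r P_i^{d_i}$ with the $P_i$ distinct irreducible homogeneous polynomials of degree $d_i$, where $r$ is the number of equivalence classes of irreducible representations of $\mathbb C\mathcal G$ and $d_i$ is the degree of the $i$-th one. Since $\mathbb C\mathcal G$ is semisimple, this count matches the number of isomorphism classes of simple $\mathbb C\mathcal G$-modules and the $d_i$ are their dimensions. The final assertion that $P_i$ determines the character $\chi_i$ of the corresponding simple module is exactly the last clause of Corollary~\ref{c:ss.alg}: by specializing the variables as in the proof there, $P_i$ becomes the characteristic polynomial of $\rho^{(i)}(b)$ for any chosen basis element $b$, and the coefficient of $t^{d_i-1}$ recovers $\chi_i(b)$.

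There is really no main obstacle here: the theorem is essentially a direct specialization of Corollary~\ref{c:ss.alg} to the semisimple algebra $\mathbb C\mathcal G$ with its natural distinguished basis $\mathcal G_1$. The only thing one needs from the groupoid structure is Theorem~\ref{t:gpd.as.matrices} to confirm semisimplicity; after that, the factorization, the count, the degrees, the distinctness, and the character-recovery all come for free from the general result. The proof should therefore be only a few lines long.
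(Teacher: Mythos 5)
Your proposal is correct and follows exactly the paper's own argument: semisimplicity of $\mathbb C\mathcal G$ via Theorem~\ref{t:gpd.as.matrices} followed by a direct application of Corollary~\ref{c:ss.alg} to the based algebra $(\mathbb C\mathcal G,\mathcal G_1)$. The paper's proof is a one-liner saying precisely this, so no further comparison is needed.
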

\begin{proof}
Since $\mathbb C\mathcal G$ is semisimple by Theorem~\ref{t:gpd.as.matrices}, this is immediate from Corollary~\ref{c:ss.alg}.
\end{proof}

\begin{Rmk}\label{rmk.gpd}
In fact, the exact analogue of Corollary~\ref{c:group.case} holds for groupoids.  That is, if $\rho^{(1)},\ldots, \rho^{(r)}$ are representatives of the equivalence classes of irreducible representations of $\mathbb C\mathcal G$, then one can take $P_k = \det[\sum_{g\in \mathcal G_1}\rho_{ij}^{(k)}(g)x_g]$.  Indeed, if we consider the generic element $a=\sum_{g\in \mathcal G_1}x_gg\in \mathbb C(X_{\mathcal G_1})\mathcal G$, then one can easily verify that the matrix of $a$ under the left regular representation $\lambda$ (with respect to the basis $\mathcal G_1$) is given by
\[\lambda(a)_{gh} = \begin{cases}x_{gh\inv}, & \text{if}\ \dom(g)=\dom(h)\\ 0, & \text{else.}\end{cases}\]  Therefore, $\lambda(a)$ is obtained from $C(\mathcal G)$ by permuting the columns according to $h\mapsto h\inv$ and so $\theta_{\mathcal G} = \pm \det \lambda(a)$.  If $\rho^{(k)}$ has degree $d_k$, then $\rho^{(k)}$ (with the scalars extended to $\mathbb C(X_{\mathcal G_1})$) appears $d_k$ times in the left regular representation and so
\[\theta_{\mathcal G} = \pm\prod_{k=1}^r \left(\det\left[\sum_{g\in \mathcal G_1}\rho_{ij}^{(k)}(g)x_g\right]\right)^{d_k}.\]
Moreover, $\det[\sum_{g\in \mathcal G_1}\rho_{ij}^{(k)}(g)x_g]$ is irreducible by~\cite[Section~23]{Noether} (or by the same argument as in the group case~\cite{ConradRep}).
\end{Rmk}

We remark that if all arrows of the groupoid $\mathcal G$ are automorphisms (i.e, each isomorphism class is a singleton), then $C(\mathcal G)$ is block diagonal with diagonal blocks $C(G_c)$ where $G_c$ is the group $\mathcal G(c,c)$.  Therefore,  $\theta_{\mathcal G} = \prod_{c\in \mathcal G_0} \theta_{G_c}$ and so  no additional work beyond Frobenius is actually required in this case.

\subsection{Factoring the semigroup determinant of an inverse semigroup}

We are now ready for the main theorem of this section.  Let $S$ be a finite inverse semigroup.  Let $\sim$ be the smallest equivalence relation on $S$ so that $ab\sim ba$ for all $a,b\in S$.  For a group, this relation is conjugacy and so we shall call it conjugacy here.  For inverse semigroups, it coincides~\cite{Mazorchuk} with the notion of generalized conjugacy in the sense of McAlister~\cite{McAlisterCharacter} or Rhodes and Zalcstein~\cite{RhodesZalc}, as discussed in~\cite[Section~7.1]{repbook}, and so the number of simple $\mathbb CS$-modules is the number of conjugacy classes.

\begin{Thm}\label{t:inverse.sgp.case}
Let $S$ be a finite inverse semigroup.  Then $\theta_S = \prod_{i=1}^r P_i^{d_i}$ where the $P_i$ are distinct irreducible homogeneous polynomials of degree $d_i$.  Here $r$ is the number of conjugacy classes of $S$, the $d_i$ are the degrees of the irreducible representations of $S$ and the $P_i$ determine the irreducible characters of $S$.  More precisely, for $s\in S$, put $y_s=\sum_{t\leq s}\mu_S(t,s)x_t$ and put $Y=\{y_s\mid s\in S\}$.  Then $\theta_S(X) = \theta_{\mathcal G(S)}(Y)$.
\end{Thm}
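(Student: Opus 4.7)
The plan is to reduce the computation of $\theta_S$ to that of the groupoid determinant $\theta_{\mathcal G(S)}$ via the algebra isomorphism $Z\colon \mathbb CS \to \mathbb C\mathcal G(S)$ of Theorem \ref{t:algebra.isom}, and then invoke Theorem \ref{t:gpd.determine}. The bridge between these two steps is the change-of-basis formula (Theorem \ref{t:cvt}), where the key observation will be that the transition matrix of $Z$ is the unimodular zeta matrix of the natural partial order on $S$.

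First I would identify the basis $\mathcal G(S)_1$ with $S$, so that both based algebras $(\mathbb CS,S)$ and $(\mathbb C\mathcal G(S),\mathcal G(S)_1)$ carry the same distinguished basis. Then the matrix $P$ of $Z$ with respect to these bases has entries $P_{t,s} = 1$ if $t \leq s$ and $0$ otherwise; this is precisely $\zeta_S$. Ordering $S$ by any linear extension of $\leq$, $\zeta_S$ becomes triangular with $1$'s on the diagonal, so $\det(P)^2 = 1$ and $P^{-1}$ has entries $\mu_S(t,s)$. The induced ring homomorphism $\til Z^{-1}$ on $\mathbb C[X_S]$ therefore satisfies
\[\til Z^{-1}(x_s) = \sum_{t \in S}(P^{-1})_{t,s}\, x_t = \sum_{t \leq s}\mu_S(t,s)\, x_t = y_s.\]
Applying Theorem \ref{t:cvt} then yields
\[\theta_S(X) = \det(P)^2\, \til Z^{-1}(\theta_{\mathcal G(S)}(X)) = \theta_{\mathcal G(S)}(Y),\]
which is the final identity of the theorem.

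To obtain the factorization, I would invoke Theorem \ref{t:gpd.determine} to write $\theta_{\mathcal G(S)}(X) = \prod_{i=1}^r P_i(X)^{d_i}$ with distinct irreducible homogeneous $P_i$ of degree $d_i$, and observe that substituting $Y$ for $X$ preserves each of these properties because $\til Z^{-1}$ is a linear invertible change of variables, hence an automorphism of $\mathbb C[X_S]$ that preserves degree. For the numerical data, the isomorphism $Z$ identifies simple $\mathbb CS$-modules with simple $\mathbb C\mathcal G(S)$-modules, so $r$ and the $d_i$ agree on both sides; by Theorem \ref{t:gpd.as.matrices} these decompose further as pairs consisting of a $\mathcal G(S)$-orbit of objects and an irreducible representation of the corresponding maximal subgroup, and the count $r$ is the number of conjugacy classes of $S$ as recorded just before the theorem. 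The claim about characters follows because each $P_i$ determines the character of the corresponding $\mathbb C\mathcal G(S)$-module by Theorem \ref{t:gpd.determine}, and this transports back to $\mathbb CS$ through $Z$.

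I do not expect a serious obstacle: the real content lives inside Theorem \ref{t:algebra.isom}, and the rest is an application of Theorem \ref{t:cvt} combined with the classical fact that the zeta matrix of a poset is unimodular with inverse the Möbius matrix. The only point demanding a little care is keeping the direction of the substitution straight: Theorem \ref{t:cvt} feeds $\theta_{\mathcal G(S)}$ through $\til Z^{-1}$, which is exactly why the Möbius function (rather than the zeta function) appears in the definition of $y_s$.
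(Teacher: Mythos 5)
Your proposal is correct and follows essentially the same route as the paper: identify the matrix of $Z$ from Theorem~\ref{t:algebra.isom} with the unimodular zeta-function of $S$, so that Theorem~\ref{t:cvt} gives $\theta_S = \til Z^{-1}(\theta_{\mathcal G(S)})$ with $\til Z^{-1}(x_s)=\sum_{t\leq s}\mu_S(t,s)x_t=y_s$, and then quote Theorem~\ref{t:gpd.determine} for the factorization. The only additions beyond the paper's argument are the (correct) remarks that an invertible linear substitution preserves irreducibility, degree and distinctness of the factors, which the paper leaves implicit.
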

\begin{proof}
This follows from Theorem~\ref{t:algebra.isom}, Theorem~\ref{t:cvt} and Theorem~\ref{t:gpd.determine} once we make the following observation.  The matrix of $Z$ from Theorem~\ref{t:algebra.isom} with respect to the bases $S$ for $\mathbb CS$ and $\mathbb C\mathcal G(S)$ is the zeta-function of $S$, which is unimodular.  The inverse isomorphism has matrix $\mu_S$ and hence is given by $s\mapsto \sum_{t\leq s}\mu_S(t,s)t$.  Therefore, $\til Z\inv(x_s) =\sum_{t\leq s}\mu_S(t,s)x_t =y_s$.
\end{proof}

Remark~\ref{rmk.gpd} can be used to write out more explicitly $\theta_S$.

In the case that $S$ is commutative, $\mathcal G(S)$ is just the disjoint union of the abelian groups $G_e$ with $e\in E(S)$ and we can apply Dedekind's factorization theorem for the determinant of an abelian group.

\begin{Thm}\label{t:comm.case}
Let $S$ be a finite commutative inverse semigroup.  Then \[\theta_S = \pm\prod_{e\in E(S)}\prod_{\chi\in \wh{G_e}}\left(\sum_{g\in G_e}\sum_{s\leq g} \chi(g)\mu_S(s,g)x_s\right).\]  In particular, $\theta_S$ factors into distinct linear factors and the factors are determined by the characters of the maximal subgroups of $S$ (or equivalently of $S$) and the M\"obius function of $S$.  More generally if $S$ is an inverse semigroup with central idempotents, then $\theta_S = \prod_{e\in E(S)} \theta_{G_e}(Y_e)$ where $Y_e = \{y_g\mid g\in G_e\}$ and $y_g=\sum_{s\leq g} \mu_S(s,g)x_s$.
\end{Thm}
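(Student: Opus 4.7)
The plan is to derive both assertions from Theorem~\ref{t:inverse.sgp.case} by analyzing $\mathcal G(S)$ in the case that idempotents are central, and then, for the commutative subclause, specializing the factorization of abelian group determinants that follows Corollary~\ref{c:group.case}.

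First I would unpack the groupoid of a Clifford semigroup. When every idempotent of $S$ is central, the characterization quoted in the excerpt gives $\dom(s)=s^*s=ss^*=\ran(s)$ for every $s\in S$, so every arrow in $\mathcal G(S)$ is an endomorphism; equivalently, every isomorphism class in $\mathcal G(S)_0=E(S)$ is a singleton. By the observation made just after Remark~\ref{rmk.gpd}, the matrix $C(\mathcal G(S))$ is block diagonal with diagonal blocks $C(G_e)$ for $e\in E(S)$, and hence
\[
\theta_{\mathcal G(S)}(X)=\prod_{e\in E(S)} \theta_{G_e}(X_{G_e}),
\]
where $X_{G_e}=\{x_g\mid g\in G_e\}$, and the various variable sets are disjoint because the groups $G_e$ are pairwise disjoint subsets of $S$.

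Next I would invoke Theorem~\ref{t:inverse.sgp.case}, which asserts $\theta_S(X)=\theta_{\mathcal G(S)}(Y)$ under the substitution $x_s\mapsto y_s=\sum_{t\leq s}\mu_S(t,s)x_t$. Substituting into each factor above yields
\[
\theta_S(X)=\prod_{e\in E(S)} \theta_{G_e}(Y_e),
\]
with $Y_e=\{y_g\mid g\in G_e\}$, which is the general statement.

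For the commutative case, every $G_e$ is abelian, so the Dedekind specialization of Corollary~\ref{c:group.case} recorded immediately after that corollary gives
\[
\theta_{G_e}=\pm\prod_{\chi\in\wh{G_e}}\sum_{g\in G_e}\chi(g)\,x_g.
\]
Applying the substitution $x_g\mapsto y_g$ inside each linear factor and then multiplying over $e\in E(S)$ produces the displayed factorization; distinctness of the linear factors is inherited from the classical abelian case, since the sets $\{y_g\mid g\in G_e\}$ are linearly independent over $\mathbb C$ (the transition matrix between $\{x_s\}$ and $\{y_s\}$ is $\mu_S$, which is unimodular). I expect no genuine obstacle: the only point to verify carefully is that the block-diagonal reduction of $\mathcal G(S)$ really uses only the centrality of idempotents and not commutativity of $S$, and that the variable sets $Y_e$ for distinct $e$ involve disjoint collections of $x_s$-variables so that the factors remain genuine polynomial factors rather than relations among shared variables; both follow from the preceding structural observations.
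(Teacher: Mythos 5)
Your proposal is correct and follows the paper's own route: Theorem~\ref{t:inverse.sgp.case}, combined with the observation that for an inverse semigroup with central idempotents every arrow of $\mathcal G(S)$ is an automorphism so that $C(\mathcal G(S))$ is block diagonal with blocks $C(G_e)$, and then Dedekind's factorization for abelian group determinants. One small correction: the sets $Y_e$ for distinct $e$ are disjoint as sets of $y$-variables but in general \emph{not} supported on disjoint collections of $x_s$-variables (an element $s$ may lie below group elements over distinct idempotents, e.g.\ a zero lies below everything); that auxiliary claim is false but also unnecessary, since the product decomposition and the distinctness of the linear factors are already guaranteed by the unimodular change of variables $x_s\mapsto y_s$ that you correctly invoke.
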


When $S$ is an abelian group, this reduces to Dedekind's theorem and when $S$ is a meet semilattice (and so all $G_e$ are trivial), this reduces to the Wilf-Lindstr\"om theorem.

\section{The semigroup determinant of a finite Frobenius ring}\label{s:Frob.ring}
In this section we show that if $R$ is a finite Frobenius ring and $K$ is a field whose characteristic does not divide the characteristic of $R$, then $KR$ is a Frobenius algebra where we take the monoid algebra with respect to the multiplicative monoid of $R$. From the viewpoint of this paper, the upshot is that the semigroup determinant of $R$ does not vanish.  But more importantly, as a consequence, we obtain a new, simpler proof of the theorem of Kov\'acs~\cite{Kovacs} that $KM_n(F)$ is isomorphic to a direct product of matrix algebras over group algebras of general linear groups  for any finite field $F$ and field $K$ whose characteristic is different from that of $F$ (the characteristic zero case is due to Ok\'ninski and Putcha~\cite{putchasemisimple}).

A finite unital ring $R$ is \emph{Frobenius} if it is self-injective and $R/J(R)$ is isomorphic to the left socle of $R$ as left $R$-modules (where $J(R)$ is the Jacobson radical of $R$).  A Frobenius algebra over a finite field is Frobenius in this sense.  In particular, $M_n(F)$ is a Frobenius ring for any finite field $F$.  It is well known that a finite commutative ring $R$ is Frobenius if and only if it is a direct product of finite local rings, each having a unique minimal ideal~\cite{wood}.  In particular, any finite principal ideal ring (like $\mathbb Z/n\mathbb Z$) is Frobenius.

If $R$ is a ring and $A$ is an abelian group, the group  $\Hom(R,A)$ of homomorphisms from the additive group of $R$ to $A$ is an $R$-$R$-bimodule via $(fr)(r') = f(rr')$ and $(rf)(r') = f(r'r)$; moreover, the bimodule structure is functorial in $A$.
If $K$ is a field, we shall call a character $\chi\colon R\to K^\times$ of the additive group of $R$ an \emph{additive character} and we put $\wh R_K=\Hom(R,K^\times)$, which is then an $R$-$R$-bimodule with the above bimodule structure.  An additive character $\chi$ is called a \emph{left (respectively, right) generating character} if it generates $\wh R_K$ as a left (respectively, right) $R$-module.   We write $\wh R$ for $\wh R_{\mathbb C}$.  It is known~\cite[Theorem~4.3]{wood} that any left generating character  of $\wh R$ is a right generating character (and vice-versa) and so we may omit the left/right descriptives in this setting. If $K$ is a field, then $\mu_{n,K}$ will denote the group of $n^{th}$-roots of unity in $K$.

\begin{Prop}\label{p:indep.field}
Let $R$ be a finite ring and $K$ an algebraically closed field whose characteristic does not divide the characteristic of $R$.  Then $\wh R\cong \wh R_K$ as an $R$-$R$-bimodule.
\end{Prop}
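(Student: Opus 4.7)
The plan is to reduce everything to an isomorphism of the relevant cyclic groups of roots of unity and then exploit the functoriality of the bimodule structure on $\Hom(R,-)$ in the coefficient group.

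First, I would observe that the characteristic $n$ of the finite unital ring $R$ is exactly the exponent of its additive group: one has $n\cdot r=(n\cdot 1)r=0$ for every $r\in R$, and $1$ itself has additive order $n$. Consequently, every additive character $R\to L^\times$, for $L$ either $\mathbb C$ or $K$, factors through the group $\mu_{n,L}$ of $n$-th roots of unity in $L$, and so $\wh R=\Hom(R,\mu_{n,\mathbb C})$ and $\wh R_K=\Hom(R,\mu_{n,K})$.

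Next, since $\mathrm{char}(K)\nmid n$, the polynomial $x^n-1$ is separable over $K$, and because $K$ is algebraically closed, $\mu_{n,K}$ is a cyclic group of order $n$; the same is of course true of $\mu_{n,\mathbb C}$. I would then fix any group isomorphism $\iota\colon \mu_{n,\mathbb C}\to \mu_{n,K}$ and define $\Phi\colon \wh R\to \wh R_K$ by $\Phi(\chi)=\iota\circ \chi$. This is manifestly a bijection, the inverse being post-composition with $\iota\inv$. To check that $\Phi$ intertwines the $R$-$R$-bimodule structures, one computes
\[\Phi(\chi r)(r')=\iota(\chi(rr'))=(\iota\circ\chi)(rr')=(\Phi(\chi)r)(r'),\]
and similarly $\Phi(r\chi)=r\Phi(\chi)$, using only that $\iota$ is a group homomorphism. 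Hence $\Phi$ is an isomorphism of $R$-$R$-bimodules.

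There is essentially no obstacle: the only mildly substantive step is verifying that $\mu_{n,K}$ has order exactly $n$, which requires both the coprimeness hypothesis (for separability of $x^n-1$) and algebraic closure of $K$ (so that all $n$ roots are present). Everything else is formal from the functoriality of $\Hom(R,-)$ in the second argument, together with the observation that each bimodule action only touches the domain $R$ and never the coefficient group.
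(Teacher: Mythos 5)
Your proof is correct and follows essentially the same route as the paper: both arguments reduce to the observation that $nR=0$ forces every additive character to land in $\mu_{n,L}$, that the coprimality and algebraic closure hypotheses make $\mu_{n,K}\cong\mathbb Z/n\mathbb Z\cong\mu_{n,\mathbb C}$, and that post-composition with such an isomorphism is a bimodule isomorphism by functoriality of $\Hom(R,-)$ in the coefficient group. Your write-up merely makes explicit the separability and bimodule-compatibility checks that the paper leaves implicit.
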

\begin{proof}
Let $n$ be the characteristic of $R$.  Then by assumption, $\mu_{n,K}\cong \mathbb Z/n\mathbb Z\cong \mu_{n,\mathbb C}$ since both fields contain a primitive $n^{th}$-root of unity.  Moreover, since $nR=0$, we have $\wh R_K=\Hom(R,\mu_{n,K})\cong \Hom(R,\mu_{n,\mathbb C})= \wh R$
 as $R$-$R$-bimodules by functoriality of $\Hom$.
\end{proof}

The following theorem is due to Wood~\cite[Theorem~3.10]{wood}.

\begin{Thm}\label{t:generating.character}
A finite ring $R$ is Frobenius if and only if $\wh R$ has a generating character, i.e., $\wh R$ is a cyclic $R$-module (on either side).
\end{Thm}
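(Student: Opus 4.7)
The plan is to pass from cyclicity of $\wh R$ to an isomorphism ${}_R R \cong {}_R \wh R$ of one-sided modules via cardinality, and then invoke standard module-theoretic properties of the character module of a finite ring. For the first step, Pontryagin duality for finite abelian groups gives $|R|=|\wh R|$: if $\chi$ generates $\wh R$ as a left $R$-module, then the surjection $R\to\wh R$, $r\mapsto r\chi$, is a left $R$-linear bijection by cardinality, hence an isomorphism; conversely, the image of $1$ under any isomorphism ${}_R R \cong {}_R \wh R$ is a left generating character. Thus the theorem reduces to showing that $R$ is Frobenius if and only if ${}_R R \cong {}_R \wh R$, together with the analogous right-handed statement.

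The two main tools I would set up are: first, that $\wh R$ is an injective cogenerator as a left $R$-module, via the tensor--hom adjunction $\Hom_R(M,\wh R)\cong\Hom_{\mathbb Z}(M,\mathbb Q/\mathbb Z)$ (using that for finite $R$ the additive characters into $\mathbb C^\times$ and $\mathbb Q/\mathbb Z$ coincide, as in Proposition~\ref{p:indep.field}); and second, the socle computation $\soc{{}_R\wh R}\cong {}_R(R/J)$. The socle computation goes as follows: $Jf=0$ iff $f(r'j)=0$ for all $r'\in R$, $j\in J$, iff $f$ vanishes on $RJ=J$, so $\soc{{}_R\wh R}\cong \wh{R/J}$ as left $R/J$-modules. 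Since $R/J$ is a finite semisimple ring, i.e., a product of matrix algebras $M_{n_i}(D_i)$ over finite fields, and each $M_{n_i}(D_i)$ is Frobenius via the trace form (so $\wh{M_{n_i}(D_i)}\cong M_{n_i}(D_i)$ as one-sided modules), the additive decomposition of $R/J$ yields $\wh{R/J}\cong R/J$ as left $R/J$-modules, and hence as left $R$-modules.

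With these in hand, the main equivalence is a socle comparison. If $R$ is Frobenius, then $R$ is self-injective, so both ${}_R R$ and ${}_R \wh R$ are finitely generated injective left $R$-modules; the Frobenius hypothesis gives $\soc{{}_R R}\cong R/J\cong \soc{{}_R\wh R}$; and since over the artinian ring $R$ any injective module decomposes by Krull--Schmidt as a direct sum of injective envelopes $E(S_i)$ of simple modules with multiplicities read off from the socle, this forces ${}_R R\cong {}_R \wh R$. Conversely, an isomorphism ${}_R R\cong {}_R \wh R$ makes ${}_R R$ injective (hence $R$ is self-injective) and transports the socle computation to give $\soc{{}_R R}\cong R/J$, so $R$ is Frobenius. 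The right-handed statement is completely symmetric.

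The main obstacle I anticipate is bimodule bookkeeping in the socle computation (making sure the $R/J$-action on $\wh{R/J}$ is the one induced from the original left $R$-action on $\wh R$), along with the appeal to the Krull--Schmidt decomposition of injectives, which is what lets one pass from equality of socles to isomorphism of injective modules and relies essentially on $R$ being artinian. Everything else, notably the identification of $\wh R$ as an injective cogenerator and the Frobenius structure on $R/J$, is standard and can be cited.
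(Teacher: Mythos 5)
The paper itself gives no proof of this statement: it is quoted verbatim from Wood's Theorem~3.10, so there is no in-text argument to compare against. Your proof is correct and is essentially the standard one (indeed close to Wood's own). All the main steps check out: the counting argument identifying left cyclicity of $\wh R$ with an isomorphism ${}_RR\cong{}_R\wh R$; the fact that ${}_R\wh R$ is a finite-length injective module (via the adjunction with $\mathbb Q/\mathbb Z$ and the identification of $\mathbb C^\times$-valued characters with $\mathbb Q/\mathbb Z$-valued ones); the computation $\soc{{}_R\wh R}=\{f: f|_{J}=0\}\cong\wh{R/J}\cong R/J$ using $RJ=J$ and the symmetric-algebra structure of a finite semisimple ring; and the principle that a finite-length injective module over an artinian ring is the injective envelope of its essential socle and hence determined up to isomorphism by that socle. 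Together these give exactly the equivalence with the paper's definition of Frobenius (self-injective plus $\soc{{}_RR}\cong R/J$). The one place where your write-up is slightly glib is the closing claim that "the right-handed statement is completely symmetric": your symmetric argument proves that right cyclicity of $\wh R$ is equivalent to the \emph{right}-handed socle condition $\soc{R_R}\cong (R/J)_R$, whereas the paper's definition uses only the left socle. To get the "on either side" clause you must either invoke the left-right symmetry of the Frobenius condition for quasi-Frobenius rings (Nakayama), or simply cite the fact the paper already records from Wood's Theorem~4.3 that left generating characters are automatically right generating characters. That is standard, but it is an external input rather than a formal symmetry of your argument, and it deserves an explicit mention.
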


Observe that if $p$ is a prime and $R$ is a finite unital ring, then $p$ divides the characteristic $n$ of $R$ if and only if it divides $|R|$.  Indeed, since $n$ is the additive order of $1$, clearly $n\mid |R|$, and so the forward implication is trivial.  On the other hand, if $p\mid |R|$, then the additive group of $R$ has an element $r$ of order $p$.  But, we also have $nr=0$, and so $p\mid n$.

We can now prove our main result of this section.

\begin{Thm}\label{t:frob.ring}
Let $R$ be a finite Frobenius ring and $K$ a field whose characteristic does not divide that of $R$.  Then $KR$ is a Frobenius algebra, where we take the monoid algebra with respect to the multiplicative monoid of $R$.
\end{Thm}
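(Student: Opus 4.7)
The plan is to produce an explicit Frobenius form on $KR$ from a generating additive character of $R$. First I would reduce to the case that $K$ is algebraically closed: being Frobenius is a property that both extends and descends along arbitrary field extensions (a finite-dimensional $K$-algebra $A$ is Frobenius iff $A\otimes_K L$ is Frobenius for any extension $L/K$, the nontrivial direction being that the left $A$-module isomorphism $A\cong A^*$ is preserved and reflected under faithfully flat base change). After that, I would extract a generating character. Since $\mathrm{char}\,K$ does not divide $\mathrm{char}\,R$ (equivalently, $|R|$, as noted just before the theorem), Proposition~\ref{p:indep.field} provides an $R$-$R$-bimodule isomorphism $\wh R\cong \wh R_K$; because $R$ is Frobenius, Theorem~\ref{t:generating.character} tells us $\wh R$ is cyclic as a one-sided $R$-module, so $\wh R_K$ is cyclic, and I fix a generating character $\chi\colon R\to K^\times$.

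Define $\lambda\colon KR\to K$ by $\lambda(r)=\chi(r)$ on the basis $R$, extended linearly; then $(a,b)\mapsto\lambda(ab)$ is an associative bilinear form, and I must check it is nondegenerate. Suppose $a=\sum_{r\in R}a_r r$ satisfies $\lambda(ab)=0$ for all $b\in KR$. Testing against each $b=s\in R$ gives $\sum_r a_r\chi(rs)=0$. Using the left $R$-module structure on $\wh R_K$, this reads $\sum_r a_r (s\chi)(r)=0$ for every $s\in R$. Since $\chi$ is a left generating character, the characters $\{s\chi : s\in R\}$ exhaust $\wh R_K$; and because $K$ is algebraically closed with $\mathrm{char}\,K\nmid|R|$, Pontryagin duality for the additive group $(R,+)$ gives $|\wh R_K|=|R|$, so $s\mapsto s\chi$ is a bijection $R\to\wh R_K$. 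Therefore $\sum_r a_r\psi(r)=0$ for \emph{every} $\psi\in\wh R_K$, and Dedekind--Artin linear independence of characters forces $a_r=0$ for all $r$. The symmetric calculation, which uses the fact (also due to Wood) that $\chi$ is simultaneously a right generating character, rules out nonzero left ideals in $\ker\lambda$, so $\ov K R$, and therefore $KR$, is Frobenius.

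The step I expect to require the most care is the bijectivity of $s\mapsto s\chi$, which is precisely where the hypothesis on $\mathrm{char}\,K$ is used (through the cardinality identity $|\wh R_K|=|R|$ for an algebraically closed field containing enough roots of unity). The remaining pieces---associativity of the bilinear form, the symmetric argument for left ideals, and the extension-of-scalars descent from $\ov K$ back to $K$---are essentially bookkeeping once the generating character is in place.
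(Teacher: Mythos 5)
Your proposal is correct and follows essentially the same route as the paper: reduce to $K$ algebraically closed, transport a generating character from $\wh R$ to $\wh R_K$ via Proposition~\ref{p:indep.field} and Theorem~\ref{t:generating.character}, and use its linear extension as the Frobenius form, with nondegeneracy coming from the fact that the $|R|$ additive characters separate points of $K[(R,+)]$ when $\mathrm{char}\,K\nmid |R|$. The paper phrases that last step by picking, for each nonzero $a$, a character $\chi$ with $\chi(a)\neq 0$ and writing $\chi=r\lambda=\lambda r'$, whereas you expand an element of the radical of the form and invoke linear independence of characters; these are the same argument.
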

\begin{proof}
By~\cite[Exercise~3.16]{Lam2} (or~\cite[Chapter~16, Lemma~19]{oknisemigroupalgebra}), we may assume without loss of generality that $K$ is algebraically closed.
By Proposition~\ref{p:indep.field}, we have that $\wh R_K\cong \wh R$ as an $R$-$R$-bimodule.  By Theorem~\ref{t:generating.character}, there is an additive character of $\wh R$ that generates $\wh R$ as both a left and a right $R$-module.  Hence there is an additive character $\lambda\in \wh R_K$ that generates it as both a left and right $R$-module.  We can extend any additive character $\chi\colon R\to K^\times$ to a $K$-linear mapping $\chi\colon KR\to K$.  Moreover, since the characteristic of $K$ does not divide the characteristic of $R$ (and hence $|R|$), we have that the group algebra of the additive group of $R$ over $K$ is semisimple and so the functionals $\chi\colon KR\to K$ with $\chi \in\wh R_K$ separate points (since $KR$ has the same underlying $K$-vector space as the group algebra of the additive group).  Therefore, if $0\neq a\in KR$, then we can find $\chi\in \wh R_K$ with $\chi(a)\neq 0$.  Since $\lambda$ is generating, we can find $r,r'\in R$ with $r\lambda = \chi=\lambda r'$.  Then $0\neq \chi(a) = \lambda(ar)=\lambda (r'a)$.  We conclude that $\ker \lambda$ contains no nonzero left or right ideal of $KR$, and so $KR$ is Frobenius.
\end{proof}

It follows that $K[\mathbb Z/n\mathbb Z]$ is Frobenius whenever the characteristic of $K$ does not divide $n$, where we take the monoid algebra with respect to the multiplicative monoid of the ring.

As a corollary, we obtain the following result on the nonvanishing of the semigroup determinant of a finite Frobenius ring.

\begin{Cor}\label{c:frob.ring}
Let $R$ be a finite Frobenius ring.  Then the semigroup determinant of the multiplicative monoid of $R$  is nonzero.
\end{Cor}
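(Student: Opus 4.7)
The plan is to combine Theorem~\ref{t:frob.ring} with the vanishing criterion established in Theorem~\ref{t:vanish.frob}, both of which are already proved in the excerpt. First I would apply Theorem~\ref{t:frob.ring} in the special case $K=\mathbb{C}$. The ring $R$ is finite, so its characteristic is some positive integer $n$, whereas $\mathbb{C}$ has characteristic $0$, which trivially does not divide $n$. The hypothesis of Theorem~\ref{t:frob.ring} is therefore satisfied, and we conclude that $\mathbb{C}R$, viewed as the monoid algebra with respect to the multiplicative monoid of $R$, is a Frobenius algebra.

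Next I would invoke Theorem~\ref{t:vanish.frob}: for any based algebra $(A,B)$, the paratrophic determinant $\theta_{(A,B)}$ fails to vanish identically if and only if $A$ is Frobenius. Taking $A=\mathbb{C}R$ with distinguished basis $B$ equal to the multiplicative monoid of $R$, the paratrophic determinant $\theta_{(A,B)}$ is, by definition, precisely the semigroup determinant $\theta_R$ of that multiplicative monoid. Combined with the preceding paragraph, this yields $\theta_R\neq 0$, which is the claim.

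There is essentially no obstacle here, since the corollary is a straightforward repackaging of Theorem~\ref{t:frob.ring} through the lens of Theorem~\ref{t:vanish.frob}. The only bookkeeping to check is that the characteristic hypothesis of Theorem~\ref{t:frob.ring} is automatic for $K=\mathbb{C}$ and that the semigroup determinant as defined earlier in the paper coincides with the paratrophic determinant of $\mathbb{C}R$ relative to the basis given by the multiplicative monoid; both are immediate.
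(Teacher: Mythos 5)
Your proposal is correct and is exactly the paper's argument: take $K=\mathbb{C}$ in Theorem~\ref{t:frob.ring} (the characteristic hypothesis being vacuous in characteristic $0$) and then apply Theorem~\ref{t:vanish.frob}. Nothing more is needed.
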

\begin{proof}
This is immediate from Theorems~\ref{t:vanish.frob} and~\ref{t:frob.ring}.
\end{proof}

Next we recover Kov\'acs's theorem~\cite{Kovacs}.  Recall that the $q$-binomial coefficient $\binom{n}{r}_q$ counts the number of $r$-dimensional subspaces of an $n$-dimensional vector space over a $q$-element field.

We need to recall Green's relations~\cite{Green}.  If $S$ is a semigroup, then two elements are \emph{$\mathscr J$-equivalent} if they generate the same two-sided ideal, \emph{$\mathscr L$-equivalent} if they generate the same left ideal and \emph{$\mathscr R$-equivalent} if they generate the same right ideal.   If $e$ is an idempotent of a semigroup $S$, the maximal subgroup of $S$ at $e$ is the group of units of the monoid $eSe$.  In a finite semigroup, $\mathscr J$-equivalent idempotents have isomorphic maximal subgroups~\cite{qtheor,repbook}.  It is shown in~\cite[Theorem~15.6]{repbook} (see also~\cite[Chapter~16, Theorem~17]{oknisemigroupalgebra})  that if $M$ is a finite regular monoid and $e_1,\ldots, e_s$ represent the  $\mathscr J$-classes of idempotents of $M$, then $KM$ is Frobenius if and only if $KM\cong \prod_{i=1}^sM_{n_i}(KG_{e_i})$ where $n_i$ is the number of $\mathscr L$-classes in the $\mathscr J$-class of $e_i$.

The multiplicative monoid of $M_n(F)$ (with $F$ a finite field) is regular (all semisimple rings are von Neumann regular). It is well known that two matrices are $\mathscr J$-equivalent if and only if they have the same rank and they are $\mathscr L$-equivalent if and only if they have the same kernel (that is, are row equivalent).   Let \[e_r =\begin{bmatrix} I_r & 0\\ 0&0\end{bmatrix}.\]  Then $e_r$ is an idempotent, $e_rM_n(F)e_r\cong M_r(F)$ and $G_{e_r}\cong \mathrm{GL}_r(F)$.  The idempotents $e_0,\ldots, e_n$ represent all the regular $\mathscr J$-classes and there are $\binom{n}{r}_q$ possible kernels of rank $r$ matrices, and hence this is the number of $\mathscr L$-classes in the $\mathscr J$-class of rank $r$ matrices.  The following theorem was originally proved in~\cite{Kovacs}.

\begin{Thm}[Kov\'acs]
Let $F$ be a finite field with $q$ elements and let $K$ be a field of characteristic different from that of $F$.  Then $KM_n(F)\cong \prod_{r=0}^nM_{\binom{n}{r}_q}(K\mathrm{GL}_r(F))$ and hence is semisimple if and only if the characteristic of $K$ does not divide $|\mathrm{GL}_n(F)|$.
\end{Thm}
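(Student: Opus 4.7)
The plan is to combine Theorem~\ref{t:frob.ring} with the structural characterization of Frobenius algebras for finite regular monoids that was recalled in the paragraph preceding the statement (namely, \cite[Theorem~15.6]{repbook}, also \cite[Chapter~16, Theorem~17]{oknisemigroupalgebra}). Since $M_n(F)$ is a finite Frobenius ring and $\mathrm{char}\,K \neq \mathrm{char}\,F$, Theorem~\ref{t:frob.ring} guarantees that $KM_n(F)$ is a Frobenius $K$-algebra. The multiplicative monoid of $M_n(F)$ is regular (as the multiplicative monoid of any semisimple ring is). So the quoted theorem applies and yields an isomorphism
\[
KM_n(F)\;\cong\;\prod_{i} M_{n_i}\!\bigl(KG_{e_i}\bigr),
\]
where the $e_i$ range over representatives of the $\mathscr J$-classes of idempotents, $G_{e_i}$ is the maximal subgroup at $e_i$, and $n_i$ is the number of $\mathscr L$-classes in the $\mathscr J$-class of $e_i$.

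Next I would identify these data explicitly for $M_n(F)$ using the information already laid out in the paragraph before the theorem. Two matrices are $\mathscr J$-equivalent iff they have the same rank and $\mathscr L$-equivalent iff they have the same row space (equivalently, the same kernel), so the standard idempotents $e_0,\dots,e_n$ with $e_r=\mathrm{diag}(I_r,0)$ form a transversal of the $\mathscr J$-classes, the maximal subgroup at $e_r$ is $G_{e_r}\cong \mathrm{GL}_r(F)$, and the number of $\mathscr L$-classes in the $\mathscr J$-class of rank-$r$ matrices is $\binom{n}{r}_q$ (the number of possible kernels). Plugging this in gives the claimed decomposition
\[
KM_n(F)\;\cong\;\prod_{r=0}^n M_{\binom{n}{r}_q}\!\bigl(K\mathrm{GL}_r(F)\bigr).
\]

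For the semisimplicity statement, a finite product of matrix algebras over group algebras is semisimple iff each $K\mathrm{GL}_r(F)$ is semisimple, and by Maschke's theorem this holds iff $\mathrm{char}\,K$ does not divide $|\mathrm{GL}_r(F)|$ for $r=0,1,\dots,n$. Because
\[
|\mathrm{GL}_r(F)| \;=\; q^{\binom{r}{2}}\prod_{i=1}^{r}(q^i-1)
\]
is a divisor of $|\mathrm{GL}_n(F)|=q^{\binom{n}{2}}\prod_{i=1}^n(q^i-1)$ for every $r\leq n$, the condition reduces to the single requirement that $\mathrm{char}\,K\nmid |\mathrm{GL}_n(F)|$.

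The bulk of the argument is therefore a matter of assembling already established ingredients; the only nontrivial input is Theorem~\ref{t:frob.ring}, which is what lets one invoke the regular-monoid structure theorem without having to verify Frobenius-ness of $KM_n(F)$ by hand. The main conceptual obstacle in this approach is really encapsulated in Theorem~\ref{t:frob.ring} itself; once $KM_n(F)$ is known to be Frobenius, the rest is bookkeeping with Green's relations and a Maschke-style count.
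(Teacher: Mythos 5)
Your proposal is correct and follows essentially the same route as the paper: invoke Theorem~\ref{t:frob.ring} to conclude $KM_n(F)$ is Frobenius, apply the structure theorem for finite regular monoids with Frobenius algebras, and read off the Green's relations data ($\mathscr J$-classes by rank, $\binom{n}{r}_q$ many $\mathscr L$-classes, maximal subgroups $\mathrm{GL}_r(F)$). Your added details on the semisimplicity criterion via Maschke and the divisibility $|\mathrm{GL}_r(F)| \mid |\mathrm{GL}_n(F)|$ are correct and merely spell out what the paper leaves implicit.
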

\begin{proof}
Since $M_n(F)$ is a Frobenius ring,  $KM_n(F)$ is a Frobenius algebra by Theorem~\ref{t:frob.ring}.  Since $M_n(F)$ is a semisimple ring, it is von Neumann regular.  The result then follows from~\cite[Theorem~15.6]{repbook} (or~\cite[Chapter~16, Theorem~17]{oknisemigroupalgebra}) and the well-known structure of Green's relations on $M_n(F)$ discussed above.
\end{proof}

Let $F$ be a finite field of characteristic $p$, $K$ an algebraically closed field of characteristic different than $p$ and $\omega\in K^\times$ a primitive $p^{th}$-root of unity.  Denote by $\mathbb F_p$ the field of $p$ elements. Then a generating character $\lambda$ of $M_n(F)$ over $K$ is given by \[A\mapsto \omega^{\mathrm{Tr}_{F/\mathbb F_p}(\mathrm{tr}(A))}\] where $\mathrm{Tr}_{F/\mathbb F_p}$ is the trace map of the field extension and $\mathrm{tr}$ is the matrix trace; see~\cite[Example~4.4]{wood} for the case $K=\mathbb C$.  The linear extension of $\lambda$ to $KM_n(F)$ then provides a Frobenius form.

\section{The trouble with nilpotent semigroups}\label{s:nilpotents}
A semigroup $S$ is called \emph{$k$-nilpotent} if it has a zero element and the product of any $k$ elements is zero.  We put $S_m=S^m\setminus S^{m+1}$ for $m\geq 1$.  Notice that any binary operation with a zero element and the property that the product of any three elements is zero is automatically associative. 
So it should be no surprise that the vast majority of associative binary operations on a set are $3$-nilpotent~\cite{Kleitman} and it is generally believed that the proportion of isomorphism classes of $3$-nilpotent semigroups goes to $1$ as the cardinality goes to infinity.
Let $S$ be a nilpotent semigroup with zero $z$ and suppose that $S^{k+1}=\{z\}$ and $S^k\neq \{z\}$; we call $k+1$ the \emph{nilpotency index} of $S$. A nonzero nilpotent semigroup $S$ obviously does not satisfy $S^2=S$, and  so $\til\theta_S=0$ by Corollary~\ref{c:squared.algebra}. 
Therefore, to obtain something interesting, we adjoin an identity.  

Many of the results in this section on algebras of nilpotent semigroups with adjoined identities are inspired by Wenger~\cite{Wenger}, but we add in the possibility of allowing a twist, which will be needed in the next section to compute the semigroup determinant of a commutative semigroup. Moreover, our results refine those of Wenger.  See also~\cite[Chapter~16]{oknisemigroupalgebra}.

If $S$ is a nilpotent semigroup and $M=S\cup \{I\}$ is obtained by adjoining the identity $I$, we put $S_0=\{I\}$.  Observe that any twisted contracted monoid algebra $\mathbb C_0(M,c)$ is a local ring.   In fact, the nonzero elements of $S$ span the nilpotent ideal $\mathbb C_0(S,c)$ and $\mathbb C_0(M,c)/\mathbb C_0(S,c)\cong \mathbb C$.  Therefore, $\mathbb C_0(S,c)$ is the radical and $\mathbb C_0(M,c)$ is local.

Let us call a nonzero element $m\in M$ \emph{left} (respectively, \emph{right}) \emph{annihilating} if $mS=\{z\}$ (respectively, $Sm=\{z\}$).  If $m$ is both left and right annihilating, we say that $m$ is \emph{annihilating}.  For example, if $S$ has nilpotency index $k+1$, then each element of $S_k$ is annihilating.  Note that if $S=\{z\}$, then the identity $I$ is annihilating.

For a Frobenius algebra $A$, the left module $A/\mathrm{rad}(A)$ is isomorphic to the left socle of $A$ (and dually for the right module $A/\mathrm{rad}(A)$ and the right socle); see the proof of~\cite[Theorem~16.21]{Lam2}). In particular, if $A$ is local (and over $\mathbb C$), then the socle must be simple (and $1$-dimensional).

\begin{Prop}\label{p:socle}
Let $S$ be a nilpotent semigroup and $M=S\cup \{I\}$.  Suppose that the twisted contracted monoid algebra $\mathbb C_0(M,c)$ is Frobenius.  Then $M$ has a unique right annihilating element $z'$. Moreover, $z'$ is annihilating and is the unique left annihilating element of $S$,  and the left and right socles of $\mathbb C_0(M,c)$ are both $\mathbb Cz'$.
\end{Prop}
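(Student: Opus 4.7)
The plan is to translate the semigroup notion of annihilating into socle membership in $A := \mathbb{C}_0(M, c)$, and then use that $A$ is a local Frobenius algebra over $\mathbb{C}$ to force uniqueness. As noted just before the proposition, $A$ has Jacobson radical $J := \mathbb{C}_0(S, c)$ with $A/J \cong \mathbb{C}$. The translation is immediate from the twisted contracted product: for a nonzero $m \in M$ and $s \in S \setminus \{z\}$, we have $m \cdot s = 0$ in $A$ iff $ms = z$ in $M$, so $mJ = 0$ iff $mS = \{z\}$, i.e., iff $m$ is left annihilating; symmetrically, $Jm = 0$ iff $m$ is right annihilating. Since $A$ is artinian with radical $J$, a standard argument identifies the right socle of $A$ with $\{a \in A : aJ = 0\}$ and the left socle with $\{a \in A : Ja = 0\}$. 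Hence the left annihilating (resp.\ right annihilating) elements of $M$ are precisely those basis vectors of $A$ lying in the right socle (resp.\ left socle).

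I will assume $S \neq \{z\}$ (the edge case reduces $A$ to $\mathbb{C}$ and the statement is trivial). For existence, if $k + 1$ is the nilpotency index of $S$, any $s \in S_k$ satisfies $sS \subseteq S^{k+1} = \{z\}$ and $Ss \subseteq S^{k+1} = \{z\}$, so $s$ is annihilating and its basis vector is a nonzero element of both one-sided socles.

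For uniqueness and the coincidence of the two annihilating elements, I will invoke two consequences of the Frobenius hypothesis. First, as recalled just before the proposition, each one-sided socle of a local Frobenius algebra over $\mathbb{C}$ is isomorphic to $A/J \cong \mathbb{C}$ and is therefore one-dimensional. Second, for a Frobenius (equivalently, quasi-Frobenius) algebra the left socle and the right socle coincide as subspaces of $A$, a classical fact. Since distinct basis vectors of $A$ are linearly independent, a one-dimensional subspace can contain at most one basis vector, so there is a unique basis element $z'$ lying in the common socle. This $z'$ is then simultaneously the unique left annihilating and the unique right annihilating element of $M$; in particular, $z'$ is annihilating. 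Since $s \cdot I = s \neq z$ for every $s \in S \setminus \{z\}$, the identity $I$ is not right annihilating, forcing $z' \in S$. Both socles therefore equal $\mathbb{C} z'$, as claimed.

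The main obstacle is the equality of the left and right socles as subspaces of $A$, not merely as abstractly isomorphic one-dimensional modules. This is where the full Frobenius structure is essential: the cleanest route is to cite the classical fact that in a quasi-Frobenius algebra the left and right socles coincide as two-sided ideals, but one can also derive this directly using a Nakayama automorphism $\sigma$ satisfying $\lambda(ab) = \lambda(b\sigma(a))$, which preserves $J$ setwise and matches up the two one-sided annihilators of $J$.
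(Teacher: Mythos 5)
Your proof is correct and follows essentially the same strategy as the paper: translate one-sided annihilation in $M$ into membership of basis vectors in the one-sided socles of $A=\mathbb{C}_0(M,c)$, and use that each socle of a local Frobenius $\mathbb{C}$-algebra is one-dimensional, so it contains at most one basis vector. The only place you diverge is the step you single out as the ``main obstacle,'' namely the coincidence of the left and right socles as subspaces of $A$. The paper never needs this fact. It instead exploits the observation you also make in your existence step: every element of $S_k$ (the top nonzero layer) is two-sidedly annihilating. Since such an element is in particular right annihilating and the right annihilating element is unique, the unique right annihilating element $z'$ is automatically annihilating; being left annihilating, it then spans the one-dimensional right socle and is the unique left annihilating element. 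So the presence of a genuinely two-sided annihilator bridges the two socles for free, and no appeal to the quasi-Frobenius socle-coincidence theorem or to a Nakayama automorphism is required. Your citation of that classical fact is legitimate and your argument goes through, but it is heavier machinery than the proposition needs.
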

\begin{proof}
We already observed that $\mathbb C_0(M,c)$ is local and that its radical is $\mathbb C_0(S,c)$.  Also,  we have remarked that if $S$ has nilpotency index $k+1$, then $S_k$ consists of annihilating elements.  Suppose that $z'\in M$ is right annihilating.    Since $Sz'=\{z\}$, it follows that $\mathbb C_0(S,c)\mathbb Cz'=0$ and so $\mathbb Cz'$ is contained in the left socle of $\mathbb C_0(M,c)$.  But since $\mathbb C_0(M,c)$ is a local Frobenius algebra over $\mathbb C$, its left socle is one-dimensional.  Thus $\mathbb Cz'$ is the left socle.  If $z''$ is also right annihilating, then $Cz''$ is also the left socle and so $z'=z''$, yielding uniqueness.  In particular, since $M$ has annihilating elements, the unique right annihilating element $z'$ must be annihilating.  A dual argument then shows that $z'$ is the unique left annihilating element of $M$ and $\mathbb Cz'$ is the right socle of $\mathbb C_0(M,c)$.
\end{proof}

The following is a generalization, and more precise version, of a result due to Wenger~\cite{Wenger}; see also~\cite[Chapter~16, Proposition~22]{oknisemigroupalgebra}.

\begin{Thm}\label{t:nilvanish}
Let $M=S\cup \{I\}$ with $S$ a  nilpotent semigroup.   Then the  determinant  $\til\theta_{M,c}$ vanishes identically for any twisted contracted monoid algebra $\mathbb C_0(M,c)$ unless $M$ has a unique annihilating element $z'$.   In this case, define an $M\setminus \{z\}\times M\setminus \{z\}$-matrix $A$ by \[A_{s,t} = \begin{cases}c(s,t), & \text{if}\ st=z'\\ 0, & \text{else.} \end{cases}\]
Then $\til\theta_{M,c} = \det A\cdot x_{z'}^{|S|}$. Note that if $c$ is trivial, then $A$ is a $0/1$-matrix and $\det A$ is an integer.
\end{Thm}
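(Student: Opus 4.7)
The plan is to interpret the paratrophic determinant $\til\theta_{M,c}$ as a universal Gram determinant and then to leverage the representation-theoretic structure of local algebras. For any linear functional $\lambda\colon \mathbb C_0(M,c)\to \mathbb C$, substituting $x_u=\lambda(u)$ into $\til C(M,c)$ produces the matrix of the bilinear form $(a,b)\mapsto \lambda(ab)$ in the basis $M\setminus\{z\}$, since each entry $c(s,t)x_{st}$ specialises to $c(s,t)\lambda(st)=\lambda(s\cdot t)$. Hence $\til\theta_{M,c}$ evaluated at $(\lambda(u))_{u\in M\setminus\{z\}}$ is nonzero precisely when $\lambda$ is a Frobenius form on $\mathbb C_0(M,c)$.

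The vanishing statement is then immediate from Proposition~\ref{p:socle} and Theorem~\ref{t:vanish.frob}: the absence of a unique annihilating element forces $\mathbb C_0(M,c)$ to fail to be Frobenius, whence $\til\theta_{M,c}\equiv 0$.

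So assume the unique annihilating element $z'$ exists; then $\mathbb C_0(M,c)$ is a local $\mathbb C$-algebra and $\mathbb C z'$ is a simple one-sided ideal. Suppose first that $\mathbb C_0(M,c)$ is itself Frobenius. By Proposition~\ref{p:socle}, the left and right socles coincide with $\mathbb C z'$; because every nonzero one-sided ideal of a local Artinian algebra with simple socle contains that socle, $\lambda$ is a Frobenius form iff $\lambda(z')\neq 0$. Consequently, the zero locus of $\til\theta_{M,c}$ in $\mathbb C^{M\setminus\{z\}}$ is exactly the hyperplane $\{x_{z'}=0\}$. Since $x_{z'}$ is irreducible in $\mathbb C[X_{M\setminus\{z\}}]$, unique factorisation gives $\til\theta_{M,c}=c^\ast x_{z'}^m$ for some nonzero scalar $c^\ast$ and $m\geq 1$, and homogeneity of degree $|S|$ forces $m=|S|$. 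Evaluating at $x_{z'}=1$ and $x_u=0$ for $u\neq z'$ collapses $\til C(M,c)$ to $A$, so $c^\ast=\det A$.

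In the remaining subcase, where $z'$ exists but $\mathbb C_0(M,c)$ is not Frobenius, Theorem~\ref{t:vanish.frob} yields $\til\theta_{M,c}\equiv 0$; the same specialisation as above then forces $\det A=0$, so the asserted identity reads $0=0$. The main obstacle is the characterisation of Frobenius functionals as those nonzero on $z'$: it hinges on $\mathbb C z'$ being the unique minimal one-sided ideal in the Frobenius case, a simplicity of the socle which comes from Proposition~\ref{p:socle} together with the standard fact that for a local Frobenius algebra the socle is one-dimensional over $\mathbb C$.
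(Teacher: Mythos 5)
Your proof is correct and follows essentially the same route as the paper: both reduce, via Proposition~\ref{p:socle} and the characterization of Frobenius forms from the proof of Theorem~\ref{t:vanish.frob}, to the observation that a specialization $x_u\mapsto\lambda(u)$ yields a nonzero value precisely when $\lambda(z')\neq 0$, and both then identify the constant by evaluating at $x_{z'}=1$ and all other variables $0$. The only difference is in one step: where you invoke unique factorization and the Nullstellensatz to pass from ``the zero locus is the hyperplane $x_{z'}=0$'' to $\til\theta_{M,c}=c^\ast x_{z'}^{|S|}$, the paper reaches the same conclusion by an elementary hands-on analysis of the irreducible factors.
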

\begin{proof}
By Proposition~\ref{p:socle}, having a unique annihilating element $z'$  is a necessary condition for $\mathbb C_0(M,c)$ to be Frobenius, and hence for the determinant not to vanish by Theorem~\ref{t:vanish.frob}.  Notice that the matrix $A$ is obtained from $C(\mathbb C_0(M,c), M\setminus \{z\})$ by specializing $x_{z'}$ to $1$ and all other variables to $0$.  Hence, if $\til\theta_{M,c}=0$, then $\det A=0$ and so the formula is true. So suppose now that $\til \theta_{M,c}$ is not zero. Then $\mathbb C_0(M,c)$ is Frobenius and by the proof of Theorem~\ref{t:vanish.frob}, we have that the linear extension of $\lambda\colon M\setminus \{z\}\to \mathbb C$ to $\mathbb C_0(M,c)$ gives rise to a nondegenerate bilinear form $(a,b)\mapsto \lambda(ab)$ if and only if substituting $x_m$ by $\lambda(m)$  in $\til\theta_{M,c}$ yields a nonzero number.  Nondegeneracy of the bilinear form associated to $\lambda$ is equivalent to $\ker\lambda$ containing no left ideal~\cite[Theorem~3.15]{Lam2}.  Since $\mathbb Cz'$ is the left socle of $\mathbb C_0(M,c)$ by Proposition~\ref{p:socle}, the latter condition is equivalent to $\lambda(z')\neq 0$. It follows that any substitution of the variables by complex numbers with $x_{z'}$ not sent to zero results in a nonzero value of $\til\theta_{M,c}$.

Let $q$ be an irreducible factor of $\til \theta_{M,c}$.  Then $q$ must be homogeneous and so we can write $q=f_d+f_{d-1}x_{z'}+\cdots+f_0x_{z'}^d$ where $d=\deg q$ and $f_i$ is a homogeneous polynomial of degree $i$ over $\til X\setminus\{x_{z'}\}$. Suppose first that $f_0=0$.  Then since $f_1,\ldots, f_d$ are homogeneous, they vanish when we substitute all the variables in $\til X\setminus \{x_{z'}\}$ by $0$. 
Thus $q(\vec 0,1)=0$, contradicting that $\til\theta_{M,c}$ does not vanish so long as we evaluate $x_{z'}$ at a nonzero element.  It follows that $f_0\neq 0$.  Suppose $f_i\neq 0$ for some $i>0$ and that $f_i(\vec \alpha)\neq 0$ (using $\mathbb C$ is infinite).  Then $h(x_{z'}) = q(\vec \alpha,x_{z'}) = g(x_{z'})+f_0x_{z'}^d$ where $g$ is a nonzero polynomial of degree less than $d$ and $f_0$ is a nonzero constant.  Thus $h$ has a root $a\neq 0$.  But then $\til\theta_{M,c}(\vec \alpha,a)=0$, again a contradiction since we specialized $x_{z'}$ to $a\neq 0$.  Thus $q=f_0x_{z'}^d$, whence by irreducibility $q=f_0x_{z'}$. We deduce that $\til\theta_{M,c} = Kx_{z'}^{|S|}$ for some constant $K\in \mathbb C$.  Specializing $x_{z'}$ to $1$ and all other variables to $0$ gives $\det A=\til\theta_{M,c}(\vec 0,1) = K$ and the result follows.
\end{proof}

The following example will be used later in the proof of Wood's theorem~\cite{woodsemigroup} on commutative chain rings.  Note that $\mathbb C[x]/(x^k)$ is well known to be a Frobenius algebra, but we verify this using Theorem~\ref{t:nilvanish} to demonstrate the technique.

\begin{Prop}\label{p:cyclic.nilpotent}
Let $S=\langle a\mid a^k=a^{k+1}\rangle$ be a cyclic nilpotent semigroup of index $k$ and $M=S\cup \{I\}$.  Then $\til\theta_M  = (-1)^{\binom{k}{2}}x_{a^{k-1}}^k$.
\end{Prop}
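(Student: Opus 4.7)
The plan is to apply Theorem~\ref{t:nilvanish} directly.  The semigroup $S$ has underlying set $\{a,a^2,\ldots,a^{k-1},a^k\}$ with $a^k$ playing the role of the zero $z$, so $|S|=k$ and $M\setminus\{z\}=\{I,a,a^2,\ldots,a^{k-1}\}$ (using the convention $a^0=I$).

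The first step is to identify the unique annihilating element of $M$ with respect to $S$.  Since $a^{k-1}\cdot a^j=a^{k-1+j}=a^k=z$ for every $j\ge 1$, we have $a^{k-1}S=Sa^{k-1}=\{z\}$, so $a^{k-1}$ is annihilating.  Conversely, if $a^j$ with $1\le j\le k-2$ were annihilating, then $a^j\cdot a=a^{j+1}$ would have to equal $z=a^k$, forcing $j+1\ge k$, a contradiction (and $I$ clearly fails to annihilate $S$).  Thus $z'=a^{k-1}$ is the unique annihilating element of $M$.

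With the trivial twist, Theorem~\ref{t:nilvanish} then yields $\til\theta_M=\det A\cdot x_{a^{k-1}}^{k}$, where $A$ is the $k\times k$ $0/1$-matrix indexed by $M\setminus\{z\}$ with $A_{s,t}=1$ exactly when $st=a^{k-1}$.  Since $a^i\cdot a^j=a^{i+j}$ for $i,j\ge 0$, the equation $st=a^{k-1}$ has precisely the $k$ solutions $(a^i,a^{k-1-i})$ with $i=0,1,\ldots,k-1$.  Ordering the basis as $I,a,a^2,\ldots,a^{k-1}$, the matrix $A$ is therefore the antidiagonal permutation matrix, whose determinant is the sign of the order-reversing permutation on $k$ letters, namely $(-1)^{\binom{k}{2}}$, and the formula follows.

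I expect no real obstacle: the argument is essentially bookkeeping once Theorem~\ref{t:nilvanish} has been applied, and the only point requiring a moment's care is ruling out the existence of any annihilator of $S$ other than $a^{k-1}$.
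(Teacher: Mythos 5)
Your proof is correct and follows essentially the same route as the paper: identify $a^{k-1}$ as the unique annihilating element, invoke Theorem~\ref{t:nilvanish} to reduce to computing $\det A$, and recognize $A$ as the antidiagonal permutation matrix whose reversal permutation has $\binom{k}{2}$ inversions. The only difference is that you explicitly verify the uniqueness of the annihilator, which the paper simply asserts.
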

\begin{proof}
Note that $a^{k-1}$ is the unique annihilating element of $M$ and so $\til\theta_M = \det A\cdot x_{a^{k-1}}^k$ where
\[A = \begin{bmatrix}0 & 0&\cdots & 1\\ 0 & \cdots & 1 & 0\\ \vdots &1 &\iddots &\\ 1 & 0& \cdots&0\end{bmatrix}\]   by Theorem~\ref{t:nilvanish}.  But this is the permutation matrix of the unique permutation with $\binom{k}{2}$ inversions.  This completes the proof.
\end{proof}

We next show that determining whether the semigroup determinant of a $3$-nilpotent semigroup with adjoined identity vanishes is a tricky business.  We already saw in Theorem~\ref{t:nilvanish} that this reduces to the problem of determining whether a $0/1$-matrix is nonsingular.  We now show that for $3$-nilpotent (commutative) semigroups with adjoined identity, the problem of determining whether $\til \theta_M$ vanishes is equivalent to the problem of determining whether a (symmetric) $0/1$-matrix is nonsingular.

\begin{Prop}\label{p:3nil}
Let $B$ be an $n\times n$ $0/1$-matrix.  Then there is a $3$-nilpotent semigroup $S$ such that if
 $M=S\cup \{I\}$,  then $\til\theta_M = -\det B\cdot x_{z'}^{n+2}$.  Moreover, if $B$ is symmetric, then $M$ is commutative.
\end{Prop}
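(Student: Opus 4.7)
The plan is to construct $S$ explicitly so that its multiplication table directly encodes $B$, using exactly $n+2$ elements so that the exponent $|S|=n+2$ predicted by Theorem~\ref{t:nilvanish} matches the desired $x_{z'}^{n+2}$.

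Concretely, I would take $S=\{a_1,\ldots,a_n,z',z\}$ and declare
\[a_ia_j = \begin{cases} z', & \text{if } B_{ij}=1,\\ z, & \text{if } B_{ij}=0,\end{cases}\]
with $z'$ and $z$ both acting as absorbing zeros against every element of $S$.  Every threefold product in $S$ lands in $\{z\}$, so the multiplication is (trivially) associative and $S$ is $3$-nilpotent with zero $z$.  Commutativity of $S$ is manifestly equivalent to symmetry of $B$, which gives the last sentence of the proposition.

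Next I would form $M=S\cup\{I\}$ and identify the annihilating elements.  Since $z'S=Sz'=\{z\}$, the element $z'$ is always annihilating.  For some $a_i$ to be annihilating would require both row $i$ and column $i$ of $B$ to be identically zero; in that case $\det B=0$ automatically, while Theorem~\ref{t:nilvanish} forces $\til\theta_M=0$ (because $M$ then has more than one annihilating element), so both sides of the target identity vanish and there is nothing to check.

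In the remaining case $z'$ is the \emph{unique} annihilating element and Theorem~\ref{t:nilvanish} gives $\til\theta_M=\det A\cdot x_{z'}^{n+2}$, where $A$ is the $0/1$-matrix indexed by $M\setminus\{z\}=\{a_1,\ldots,a_n,z',I\}$ with $A_{s,t}=1$ precisely when $st=z'$.  Reading off the products shows that $A$ is block diagonal with an $n\times n$ block equal to $B$ on the $a_i$'s and a $2\times 2$ antidiagonal block $\left[\begin{smallmatrix}0&1\\1&0\end{smallmatrix}\right]$ on $\{z',I\}$ (since $z'I=Iz'=z'$ while $z'z'=z$ and $I\cdot I=I\neq z'$, and none of $a_iI$, $Ia_i$, $a_iz'$, $z'a_i$ equal $z'$).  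Therefore $\det A=-\det B$ and the identity $\til\theta_M=-\det B\cdot x_{z'}^{n+2}$ follows.  No genuine obstacle arises: the construction is essentially forced by the sizing constraint $|S|=n+2$, and the small cute point is that the extra minus sign comes from the antidiagonal $2\times 2$ block unavoidably created by pairing $I$ with $z'$.
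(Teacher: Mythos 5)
Your proof is correct and follows essentially the same route as the paper: the same semigroup $S=\{a_1,\ldots,a_n,z',z\}$ encoding $B$, followed by an application of Theorem~\ref{t:nilvanish} and the computation $\det A=-\det B$ (the paper orders the basis as $I,a_1,\ldots,a_n,z'$ and gets the corner $1$'s rather than your block-diagonal form, but the sign works out identically). Your explicit treatment of the degenerate case where some $a_i$ is also annihilating is a small point the paper leaves implicit, and it is handled correctly.
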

\begin{proof}
Let $S=\{s_1,\ldots, s_n,z',z\}$ where $s_is_j=z'$ if $B_{ij}=1$ and all remaining products are $z$.  Then $S$ is $3$-nilpotent and is commutative if and only if $B$ is symmetric.  Let $A$ be the matrix from Theorem~\ref{t:nilvanish}.  If we order $M\setminus \{z\}$ as $I,s_1,\ldots, s_m, z'$,  then \[A = \begin{bmatrix} 0 & \cdots & 1\\ \vdots & B & \vdots\\ 1 & \cdots & 0\end{bmatrix} \] (where all omitted entries are $0$),  and so $\det A=-\det B$.  The result follows from Theorem~\ref{t:nilvanish}.
\end{proof}

\section{The curious case of commutative semigroups}
In this section, we factor the semigroup determinant of a commutative semigroup.  Our results will recover Theorem~\ref{t:comm.case} (hence Dedekind and Wilf-Lindstr\"om) and Wood's theorem (generalized from rings to monoids).

According to Corollary~\ref{c:squared.algebra}, a necessary condition for $\theta_S\neq 0$ is that $S^2=S$. Such semigroups are sometimes called  \emph{idempotent semigroups}, although this can be confused with semigroups all of whose elements are idempotents, also known as bands.  In this paper we shall use idempotent semigroup to mean $S^2=S$, as the only bands that we shall encounter are semilattices.  Of course every monoid is idempotent and so is every inverse semigroup.

It is well known that a finite semigroup $S$ satisfies $S=S^2$ if and only if $S=SE(S)S$, where, as usual, $E(S)$ denotes the set of idempotents of $S$ (this follows, for instance, from~\cite[Lemma~4.5.6]{qtheor}).

 A number of our reductions hold for semigroups with central idempotents and so we start there.

\subsection{Idempotent semigroups with central idempotents}

Throughout this subsection we shall assume that $S$ is finite, $S^2=S$ and that
 $E(S)$ is contained in the center  of $S$.  Note that $s=xey$ with $e\in E(S)$ implies $ese=exeye=xey=s$ and so $e$ is a left and right identity for $s$.  Also, since $E(S)$ commutes, it is a meet semilattice with respect to the ordering $e\leq f$ if $ef=e$ (or equivalently, $e\in fE(S)$).  The set of $e\in E(S)$ with $se=s$ (or, equivalently, $ese=s$) is a nonempty subsemigroup of $E(S)$ (by the above discussion) and hence has a unique minimum element that we denote by $s^+$. Note that $e^+=e$ for $e\in E(S)$.   We define an equivalence relation by $s\sim t$ if $s^+=t^+$. We write $\til H_s$ for the equivalence class of $s$ (this is the $\til{\mathscr H}$-class of $s$ in the sense of Fountain \textit{et al}.~\cite{Ltilde}). Note that $\til H_s\subseteq s^+Ss^+=s^+S=Ss^+$.

\begin{Prop}\label{p:unique.idem}
Let $s\in S$. Then $s^+$ is the unique idempotent in $\til H_s$.
\end{Prop}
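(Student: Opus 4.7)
The plan is to exploit the remark made just before the proposition: for any idempotent $e\in E(S)$, one has $e^+=e$. Once this is in hand, the statement reduces to unwinding the definition of $\til H_s$.

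First I would verify that $s^+$ actually belongs to $\til H_s$. Since $s^+$ is itself an idempotent, the observation above gives $(s^+)^+ = s^+$, so $s^+\sim s$ by the definition of the equivalence relation, and therefore $s^+\in \til H_s$. This shows existence of an idempotent in $\til H_s$.

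For uniqueness, suppose $e\in E(S)\cap \til H_s$. By definition of $\sim$, we have $e^+ = s^+$. But again, because $e$ is idempotent, $e^+ = e$, and combining these two equalities yields $e = s^+$. Hence $s^+$ is the only idempotent in $\til H_s$.

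There is no real obstacle here: the content is entirely absorbed into the preceding lemma-like remark that $e^+=e$ for $e\in E(S)$, which in turn uses only that $E(S)$ is central (so that the set $\{e\in E(S)\mid se=s\}$ is a meet-subsemilattice with a well-defined minimum). The proposition is essentially a bookkeeping statement ensuring that the map $s\mapsto s^+$ is a well-defined retraction from $S$ onto $E(S)$ whose fibers are exactly the $\til H$-classes, and the argument above is the only thing needed to justify it.
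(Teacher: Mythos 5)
Your proof is correct and follows essentially the same route as the paper, which likewise reduces everything to the observation that $e^+=e$ for idempotents and concludes $e=e^+=s^+$ for any idempotent $e\in \til H_s$. The only difference is that you spell out the (immediate) existence step $s^+\in\til H_s$, which the paper leaves implicit.
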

\begin{proof}
If $e\in \til H_s$ is an idempotent, then $e=e^+=s^+$, as required.
\end{proof}

 In order to imitate our approach to inverse semigroup algebras~\cite{mobius2} in this setting, we need to define a partial order on $S$.

\begin{Prop}\label{p:po}
 Let $S$ be an idempotent finite semigroup with central idempotents.  Define $s\leq t$ if $s=te$ with $e\in E(S)$.
\begin{enumerate}
\item $\leq$ is a partial order extending the order on $E(S)$.
\item $s\leq t$ and $s'\leq t'$ implies $ss'\leq tt'$.
\item $s\leq t$ if and only if $s=ts^+$.
\item If $a\leq st$, then there exist unique $s_0\leq s$ and $t_0\leq t$ with $a=s_0t_0$ and $s_0^+=a^+=t_0^+$.
\end{enumerate}
\end{Prop}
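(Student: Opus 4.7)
The plan is to treat the four items in order, leaning throughout on two basic facts recorded just before the proposition: idempotents are central, and $se = s$ is equivalent to $s^+\leq e$ (so $s^+$ acts like a relative identity for $s$, and $es^+ = s^+$ whenever $se = s$). Almost every manipulation below is simply shuffling idempotents past other elements using centrality and replacing $ss^+$ by $s$.

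For (1), reflexivity is $s = s\cdot s^+$. Transitivity follows from $s = te$ and $t = uf$ by writing $s = u(fe)$ with $fe\in E(S)$. For antisymmetry, suppose $s = te$ and $t = sf$ with $e,f\in E(S)$. Then $se = te\cdot e = s$, so $s^+\leq e$, and I would compute $st^+ = te t^+ = tt^+ e = te = s$ to obtain $s^+\leq t^+$; the symmetric argument gives $t^+\leq s^+$, hence $s^+ = t^+$. From $s^+\leq e$ one has $es^+ = s^+$, so $s = te = te s^+ \cdot$... more cleanly, $ts^+ = tes^+ = ss^+ = s$, and likewise $ts^+ = tt^+ = t$, forcing $s = t$. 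That $\leq$ extends the order on $E(S)$ follows immediately from centrality, since $e = fg$ with $g\in E(S)$ collapses to $ef = e$ and conversely. Item (2) is a one-line computation: from $s = te$ and $s' = t'e'$, centrality gives $ss' = tt'(ee')$ with $ee'\in E(S)$.

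Item (3) is the pivot and should be proved next. If $s = te$, then $se = s$ forces $s^+\leq e$, so $es^+ = s^+$, and $ts^+ = tes^+ = ss^+ = s$. The converse direction takes $e = s^+$ and uses $s^+\in E(S)$.

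Item (4) is the main content of the proposition. For existence, I propose $s_0 = sa^+$ and $t_0 = ta^+$. Then $s_0\leq s$ and $t_0\leq t$ are immediate, and $s_0 t_0 = st(a^+)^2 = sta^+ = a$ using (3) applied to the hypothesis $a\leq st$. To compute $s_0^+$, first note $s_0 a^+ = s_0$, yielding $s_0^+\leq a^+$; for the reverse inequality, compute $as_0^+ = s_0 t_0 s_0^+ = s_0 s_0^+ t_0 = s_0 t_0 = a$, where centrality of $s_0^+$ is crucial, giving $a^+\leq s_0^+$. The argument for $t_0$ is symmetric. Uniqueness is then the cleanest step: any $s_0'\leq s$ with $(s_0')^+ = a^+$ must equal $s(s_0')^+ = sa^+ = s_0$ by (3), and similarly $t_0' = t_0$. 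The main obstacle I anticipate is merely the bookkeeping needed to keep centrality explicit at each step — no new idea beyond (3) and the equivalence $se = s\iff s^+\leq e$ is required.
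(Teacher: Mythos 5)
Your proof is correct and follows essentially the same route as the paper: the same choices $s_0=sa^+$, $t_0=ta^+$ in item (4), the same use of centrality and of the characterization $se=s\iff s^+\leq e$ throughout. The only cosmetic difference is that you establish antisymmetry by first proving $s^+=t^+$ and then invoking (3), whereas the paper does it with a single direct chain of equalities; both are fine.
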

\begin{proof}
Since $s=ss^+$, the relation $\leq$ is reflexive.  It is also transitive as $s\leq t$ and $t\leq u$, implies that $s=te$ and $t=uf$ with $e,f\in E(S)$.  But then $s=ufe$ and $fe\in E(S)$, and so $s\leq u$.  Suppose that $s\leq t$ and $t\leq s$.  Then $s=te$ and $t=sf$ with $e,f\in E(S)$.  But then  $s=te=sfe=sef=teef=tef=sf=t$.  Therefore, $\leq$ is a partial order and it clearly  extends the order on $E(S)$.  If $s\leq s'$ and $t\leq t'$, we can write $s=s'e$ and $t=t'f$ with $e,f\in E(S)$.  Then $s't'ef=s'et'f=st$ and so $st\leq s't'$ as $ef\in E(S)$.  If $s=ts^+$, then obviously $s\leq t$. If $s\leq t$, then $s=te$ with $e\in E(S)$.  But then $se=s$ and so $s^+\leq e$ in $E(S)$. Therefore, $s=ss^+=tes^+=ts^+$, as required.

For the final property, put $s_0 = sa^+$ and $t_0 = ta^+$.  Then  $s_0\leq s$, $t_0\leq t$ by definition and $s_0t_0 = sta^+=a$ by~(3).  By construction, $s_0a^+=s_0$.  If $e\in E(S)$ with $s_0e=s_0$, then $ae= s_0t_0e=s_0et_0=s_0t_0 =a$ and so $a^+\leq e$.  Thus $s_0^+=a^+$ and, similarly, $t_0^+=a^+$.  The uniqueness is clear since if $s_1\leq s$ and $s_1^+=a^+$, then $s_1=sa^+$ by (3) and similarly for $t$.
\end{proof}

Observe that if $e\in E(S)$, then $s\in eSe$ if and only if $se=s$, if and only if, $s^+\leq e$.

\begin{Prop}\label{p:id.quotients}
Let $e\in E(S)$ and $I_e=\{s\mid s^+<e\}$.  Then $I_e$ is an ideal of $S$.
\end{Prop}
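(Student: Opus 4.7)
The plan is to verify directly that $I_e$ is closed under multiplication by arbitrary elements of $S$ on both sides, using only the centrality of idempotents and the defining minimality of $s^+$ as the least idempotent satisfying $s s^+ = s$. Fix $s \in I_e$, so $s^+ < e$, and let $t \in S$. The single key computation is
\[
(st)\, s^+ \;=\; s\,(s^+ t) \;=\; s\,(t\, s^+) \;=\; (st)\, s^+,
\]
where the middle equality uses centrality of $s^+$; more usefully, rearranging gives $(st)s^+ = (ss^+)t = st$. Thus $s^+$ lies in the set of idempotents fixing $st$ on the right, so by the minimality defining $(st)^+$ we conclude $(st)^+ \leq s^+$. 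Transitivity of $\leq$ on $E(S)$ combined with $s^+ < e$ then yields $(st)^+ < e$, i.e.\ $st \in I_e$. Hence $I_e S \subseteq I_e$.

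A symmetric computation $(ts) s^+ = t(s s^+) = ts$ (this one does not even require centrality, only $ss^+ = s$) shows $(ts)^+ \leq s^+ < e$, so $ts \in I_e$, giving $S I_e \subseteq I_e$. Together these are exactly the definition of $I_e$ being a two-sided ideal. There is essentially no obstacle: the argument is a two-line consequence of centrality of idempotents plus the minimality characterization of $s^+$ from Proposition~\ref{p:unique.idem} and the paragraph preceding it. The only subtlety worth noting is that we need the strict inequality $s^+ < e$ to survive the weakening to $(st)^+ \leq s^+$, which is immediate because $\leq$ is a partial order on $E(S)$ by Proposition~\ref{p:po}(1).
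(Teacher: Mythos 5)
Your proof is correct and is essentially identical to the paper's: both verify $(st)s^+=st$ using centrality of $s^+$ and $(ts)s^+=ts$ directly, then invoke the minimality of $(st)^+$ and $(ts)^+$ to get $(st)^+\leq s^+<e$ and $(ts)^+\leq s^+<e$. No further comment is needed.
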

\begin{proof}
Let $s\in I_e$ and $t\in S$. Then $(st)s^+=ss^+t=st$ and so $(st)^+\leq s^+<e$.  Similarly, $(ts)s^+ = ts$ and so $(ts)^+<e$.
\end{proof}

Recall that the maximal subgroup $G_e$ of $S$ at $e$ is the group of units of $eSe$.
Note that $I_e=\emptyset$ if and only if $eSe=G_e=\til H_e$.  Indeed, $eSe\setminus I_e=\til H_e$ and so if $I_e=\emptyset$, then $\til H_e=eSe$ is a finite monoid with unique idempotent $e$.  But that makes $eSe$ a group and so $G_e=eSe$.
For $e\in E(S)$, we put $\til H_e^0 = eSe/I_e$, the Rees quotient, if $I_e\neq \emptyset$ and if $I_e=\emptyset$, then we put $\til H_e^0=G_e\cup \{z\}$ where $z$ is an adjoined zero (recall that $\til H_e=G_e$ in this case).  Note that $\til H_e^0$ is a monoid with identity $e$ and can be identified with $\til H_e\cup \{z\}$ where $z$ is a zero and, for $a,b\in \til H_e$, we have that
\[a\cdot b = \begin{cases} ab, & \text{if}\ ab\in \til H_e\\ z, & \text{else.}\end{cases}\]  From now on we just write $ab$ as no confusion should arise.

 The next proposition says that $\til H_e^0$ has a structure similar to that of a local ring.

\begin{Prop}\label{p:is.local}
If $e\in E(S)$, then the group of units of $\til H_e^0$ is $G_e$ and every nonunit is nilpotent.
\end{Prop}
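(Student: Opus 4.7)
The plan is to verify each claim separately using Proposition~\ref{p:unique.idem}, together with the standard fact that in any finite semigroup every element has some power that is idempotent.

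For the identification of the units, I would first argue $G_e \subseteq \til H_e$. If $g \in G_e$ has inverse $g\inv$ in $eSe$, then from $ge = g$ we see $g^+ \leq e$; setting $f = g^+$ and multiplying $gf = g$ on the left by $g\inv$ (using centrality of the idempotents) yields $ef = g\inv g = e$, so $f = e$ since $f \leq e$ forces $ef = f$. The symmetric argument puts $g\inv \in \til H_e$ as well. Now if $h \in \til H_e^0$ is a unit of $\til H_e^0$ with inverse $h'$, then $hh' = h'h = e \neq z$, so by the very definition of the multiplication on $\til H_e^0$ these products coincide with the $S$-products and $h, h'$ form a mutually inverse pair in $eSe$, whence $h \in G_e$. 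Conversely, each $g \in G_e$ has its $eSe$-inverse lying in $G_e \subseteq \til H_e$, so $g$ is a unit of $\til H_e^0$.

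For the nilpotency claim, let $h \in \til H_e^0$ be a nonunit; if $h = z$ there is nothing to show, so suppose $h \in \til H_e$. Consider the $S$-powers $h, h^2, h^3, \ldots$. If some $h^k$ fails to lie in $\til H_e$, then the $k$-th power computed in $\til H_e^0$ equals $z$ and $h$ is nilpotent in $\til H_e^0$. Otherwise every power of $h$ stays in $\til H_e$, and by finiteness of $S$ some power $h^N$ is idempotent; Proposition~\ref{p:unique.idem} then forces $h^N = e$, which makes $h$ a unit of $\til H_e^0$ (with inverse $h^{N-1}$), contradicting our assumption.

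The only mild obstacle is keeping the bookkeeping straight between multiplication in $S$ and multiplication in the adjoined-zero quotient $\til H_e^0$; the crucial observation is that a product in $\til H_e^0$ is nonzero precisely when its $S$-counterpart still lies in $\til H_e$. Once that is in place, both claims fall out of Proposition~\ref{p:unique.idem} and the finiteness of $S$.
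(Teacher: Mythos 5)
Your proof is correct and follows essentially the same route as the paper: establish $G_e\subseteq \til H_e$, note that units of $\til H_e^0$ correspond exactly to units of $eSe$ because the relevant products are nonzero, and then use finiteness plus Proposition~\ref{p:unique.idem} to force any non-nilpotent element's idempotent power to be $e$. The only cosmetic difference is that you verify $G_e\subseteq\til H_e$ by a direct computation with $g^+$, whereas the paper observes that elements of $G_e$ generate the same ideal as $e$ and hence cannot lie in the ideal $I_e$.
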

\begin{proof}
Since each element of $G_e$ generates the same ideal as $e$ and $e\notin I_e$, we deduce that $G_e\subseteq eSe\setminus I_e=\til H_e$.  Since an element of $\til H_e$ is a unit in $\til H_e^0$ if and only if it is in $eSe$, the first claim follows.  Suppose that $s\in \til H_e$. Then $s^n$ is an idempotent $f$ for some $n>0$. If $f=e$, then $s\in G_e$ (with inverse $s^{n-1}$).  If $f\neq e$, then $f\notin \til H_e$ by Proposition~\ref{p:unique.idem}, i.e., $f\in I_e$.  Thus $s^n=z$ in $\til H_e^0$.
\end{proof}

Monoids in which every nonunit is nilpotent are called elementary monoids in~\cite{PoniFrob} and~\cite[Chapter~16]{oknisemigroupalgebra}. They are precisely the finite monoids in which the identity and zero are the only idempotents.

We can now prove an analogue of Theorem~\ref{t:algebra.isom} for idempotent semigroups with central idempotents.    Note that the algebra  $A=\prod_{e\in E(S)} K_0\til H_e^0$, viewed as an internal direct sum,  is a $K$-vector space with basis the disjoint union of the $\til H_e$, which in turn is just $S$ by Proposition~\ref{p:unique.idem}.  From this point of view, the product of two elements $s,t\in S$ is nonzero in $A$ if and only if $s^+=t^+=(st)^+$, in which case their product in $A$ is their product in $S$.

\begin{Thm}\label{t:central.idems}
Let $S$ be an idempotent finite semigroup with central idempotents.  Then the mapping $Z\colon KS\to \prod_{e\in E(S)} K_0\til H_e^0$ given by $Z(s) = \sum_{t\leq s} t$ on $s\in S$ is an isomorphism of $K$-algebras.  In particular, $KS$ is unital.
\end{Thm}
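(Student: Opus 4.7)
The strategy mirrors that of Theorem~\ref{t:algebra.isom} in the inverse semigroup setting: I would show that $Z$ is a linear isomorphism because its matrix is a unitriangular zeta function, and then verify the homomorphism property by a direct expansion exploiting Proposition~\ref{p:po}(4). Unitality of $KS$ will then be automatic from unitality of the target.

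For the linear isomorphism, by Proposition~\ref{p:unique.idem} the basis $\bigsqcup_{e \in E(S)} \til H_e$ of $\prod_{e \in E(S)} K_0\til H_e^0$ coincides with $S$. The matrix of $Z$ with respect to the two copies of $S$ is then the zeta matrix of the poset $(S,\leq)$ from Proposition~\ref{p:po}. Ordering $S$ by any linear extension of $\leq$ makes this matrix upper unitriangular and hence unimodular, so $Z$ is bijective. For the homomorphism property, I would expand
\[Z(s)Z(t) = \sum_{s_0 \leq s,\ t_0 \leq t} s_0 \cdot t_0,\]
where the product on the right is computed in $\prod_{e\in E(S)} K_0\til H_e^0$. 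As described in the paragraph preceding the theorem, $s_0 \cdot t_0$ is nonzero in the target if and only if $s_0^+ = t_0^+ = (s_0 t_0)^+$, in which case it equals $s_0 t_0$. Proposition~\ref{p:po}(4) supplies a bijection between elements $a \leq st$ and pairs $(s_0, t_0)$ satisfying exactly these conditions, via $a \mapsto (sa^+, ta^+)$ with inverse $(s_0, t_0) \mapsto s_0 t_0$. Hence the nonzero terms of the above sum reindex as $\sum_{a \leq st} a = Z(st)$.

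The main obstacle is the bookkeeping in this homomorphism check: one must verify that the two conditions making $s_0 \cdot t_0$ nonzero in the product algebra (the factors must agree, namely $s_0^+ = t_0^+$, and the Rees quotient product must survive, namely $(s_0 t_0)^+ = s_0^+$) align exactly with the clauses in Proposition~\ref{p:po}(4), and that the recovery $s_0 = s a^+$, $t_0 = t a^+$ from Proposition~\ref{p:po}(3) makes the inverse map well-defined. Once this is settled, the unitality of $KS$ follows at once: each $K_0\til H_e^0$ is a contracted monoid algebra with identity $e$, so $\prod_{e \in E(S)} K_0\til H_e^0$ has identity $\sum_{e \in E(S)} e$, and transporting back along $Z^{-1}$ yields an identity of $KS$.
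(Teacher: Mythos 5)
Your proposal is correct and follows essentially the same route as the paper's own proof: invertibility of $Z$ via the unitriangular zeta matrix of $(S,\leq)$, the homomorphism property via the reindexing of $\sum_{a\leq st}a$ against pairs $(s_0,t_0)=(sa^+,ta^+)$ supplied by Proposition~\ref{p:po}(2),(4), and unitality of $KS$ transported back from the identity $\sum_{e\in E(S)}e$ of the target. The only cosmetic difference is that you spell out the bijection and its inverse explicitly where the paper compresses this into a single chain of equalities.
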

\begin{proof}
Since the matrix of $Z$ with respect to the natural basis $S$ for both algebras is the zeta-function of $S$, it follows that $Z$ is invertible with inverse \[s\mapsto \sum_{t\leq s}\mu_S(t,s)t.\]  It remains to check that $Z$ is a homomorphism.  This is immediate from 
Proposition~\ref{p:po}(2),(4):
\[Z(st) = \sum_{a\leq st} a = \sum_{\substack{s_0\leq s,t_0\leq t\\ s_0^+=t_0^+=(s_0t_0)^+}} s_0t_0 = \sum_{s_0\leq s}s_0\cdot \sum_{t_0\leq t} t_0 = Z(s)Z(t),\] as required (where the products in the two rightmost terms are taken in $\prod_{e\in E(S)} K_0\til H_e^0$).
\end{proof}

Theorem~\ref{t:central.idems} generalizes a theorem of Ponizovski\u{\i}~\cite{PoniFrob}, which proves that if $S$ is an idempotent finite commutative semigroup, then $KS$ is isomorphic to a direct product of contracted semigroup algebras of monoids with no idempotents except $0$ and $1$.  However, our isomorphism is explicit and, in particular, we can write down the inverse isomorphism using the M\"obius function of $S$, which is necessary in order to compute the semigroup determinant; for example, the proof of Ponizovski\u{\i} would not be explicit enough to recover the Wilf-Lindstr\"om theorem for the semigroup determinant of a meet semilattice.

The general machinery now computes the semigroup determinant of $S$ in terms of the contracted semigroup determinants of the $\til H_e^0$.

\begin{Thm}\label{t:ze.case}
Let $S$ be a finite idempotent semigroup with central idempotents. For $s\in S$, put $y_s = \sum_{t\leq s}\mu_S(t,s)x_t$.  Then 
\[\theta_S = \prod_{e\in E(S)}\til\theta_{\til H_e^0}(Y_e)\]  where  $Y_e = \{y_s\mid s\in \til H_e\}$.
\end{Thm}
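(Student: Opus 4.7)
The plan is to derive this as an immediate consequence of the algebra isomorphism in Theorem~\ref{t:central.idems}, combined with the change-of-basis formula of Theorem~\ref{t:cvt} and the product formula of Proposition~\ref{p:direct.prod}. This is very much parallel to how Theorem~\ref{t:inverse.sgp.case} was deduced from Theorem~\ref{t:algebra.isom}.

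First I would set $A' = \prod_{e\in E(S)} \mathbb{C}_0 \til H_e^0$ with distinguished basis $B' = \bigsqcup_{e\in E(S)} \til H_e$. By Proposition~\ref{p:unique.idem} the sets $\til H_e$ partition $S$, so $B'$ is in canonical bijection with $S = B$. The matrix $P$ of the isomorphism $Z$ with respect to $B$ and $B'$ has $(t,s)$-entry equal to the coefficient of $t$ in $Z(s) = \sum_{r\leq s} r$, which is exactly $\zeta_S(t,s)$. Ordering the basis by a linear extension of $\leq$ makes $P$ upper unitriangular, so $\det P = 1$. Inverting gives $P^{-1} = \mu_S$, and therefore $\til Z^{-1}(x_s) = \sum_{t\leq s} \mu_S(t,s)\, x_t = y_s$ for every $s\in S$.

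Next, I would apply Proposition~\ref{p:direct.prod} to $A'$. Since $B'$ splits as the disjoint union of the $\til H_e$, the Cayley matrix $C(A',B')$ is block diagonal with blocks $C(\mathbb{C}_0\til H_e^0, \til H_e)$, yielding
\[\theta_{(A',B')} \;=\; \prod_{e\in E(S)} \til\theta_{\til H_e^0},\]
where the factor indexed by $e$ is a polynomial in the variables $\{x_s : s\in \til H_e\}$.

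Finally, Theorem~\ref{t:cvt} gives
\[\theta_S \;=\; \det(P)^2 \cdot \til Z^{-1}\bigl(\theta_{(A',B')}\bigr) \;=\; \til Z^{-1}\!\Bigl(\prod_{e\in E(S)} \til\theta_{\til H_e^0}\Bigr),\]
and since $\til Z^{-1}$ substitutes $x_s \mapsto y_s$ and each factor $\til\theta_{\til H_e^0}$ only involves the variables indexed by $\til H_e$, this is exactly $\prod_{e\in E(S)} \til\theta_{\til H_e^0}(Y_e)$. There is no genuine obstacle here: the structural content is carried entirely by Theorem~\ref{t:central.idems}, and the extraction of the determinantal identity is a formal manipulation within the paratrophic calculus already established in Section~2.
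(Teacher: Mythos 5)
Your proposal is correct and follows exactly the paper's own route: Theorem~\ref{t:central.idems} supplies the isomorphism $Z$ whose matrix is the unimodular zeta-function of $S$, Proposition~\ref{p:direct.prod} factors the determinant of the product algebra, and Theorem~\ref{t:cvt} performs the substitution $x_s\mapsto y_s$ via the M\"obius function. You have merely written out in more detail the same three steps the paper compresses into a few lines.
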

\begin{proof}
The matrix of the isomorphism $Z$ from Theorem~\ref{t:central.idems} is the
zeta-function of $S$.  Therefore, $\til Z\inv (x_s) = \sum_{t\leq s} \mu_S(t,s)x_t=y_s$.  Since $Z$ is unimodular, Proposition~\ref{p:direct.prod} and Theorem~\ref{t:cvt}  give the desired result.
\end{proof}

Note that if $S$ is an inverse semigroup with central idempotents, then $\til H_e=G_e$ for each idempotent $e$ and Theorem~\ref{t:ze.case} reduces to Theorem~\ref{t:comm.case}.

\subsection{Commutative semigroups}
By Corollary~\ref{c:squared.algebra}, the computation of the semigroup determinant of a commutative semigroup $S$ is reduced to the case that $S^2=S$.
Since commutative semigroups obviously have central idempotents,  the computation of $\theta_S$ is reduced  by Theorem~\ref{t:ze.case} to the case of the contracted semigroup determinant of a commutative monoid $M$ with zero $z$ and group of units $G$ such that each nonunit is nilpotent.  These were studied by Ponizovski\u{\i}~\cite{PoniFrob} and Wenger~\cite{Wenger} in an attempt to understand when the algebra of a finite commutative semigroup is Frobenius over an arbitrary field.  Here we stick with the field of complex numbers, which is a simpler case, but we need to give more precise results in order to compute explicitly the semigroup determinant.

If $M$ is a commutative monoid with group of units $G$, then $M/G =\{Gm\mid m\in M\}$ is a monoid with multiplication $Gm_1\cdot Gm_2=Gm_1m_2$, and is a quotient of $M$ via $m\mapsto Gm$.

Let $\wh{G}$ denote the group of characters $\chi\colon G\to \mathbb C^\times$. Note that if $|G|=n$, then $\chi(G)\leq \mu_n$ where $\mu_n$ is the group of $n^{th}$-roots of unity.  Thus $\chi(g\inv)=\overline{\chi(g)}$ for all $g\in G$.  Recall that if \[e_{\chi} = \frac{1}{|G|}\sum_{g\in G}\ov{\chi(g)}g,\] then $1=\sum_{\chi\in \wh G}e_{\chi}$ is a decomposition into orthogonal primitive idempotents of $\mathbb CG$ and $e_{\chi}\mathbb C_0M$ is the isotypic component of $\chi$ for the left $\mathbb CG$-module structure on $\mathbb C_0M$. We remark that $ge_{\chi}=\chi(g)e_{\chi}=e_{\chi}g$ for all $g\in G$.

Note that since $\mathbb CG\leq \mathbb C_0M$ and $\mathbb C_0M$ is commutative, we have an isomorphism of $\mathbb C$-algebras,
\begin{equation}\label{eq:local.decomp.artin}
\mathbb C_0M\cong \prod_{\chi\in \wh G} e_{\chi}\mathbb C_0Me_{\chi}=\prod_{\chi\in \wh G}e_{\chi}\mathbb C_0M.
\end{equation}
  We will show that each $e_{\chi}\mathbb C_0M$ is a twisted contracted monoid algebra of a commutative monoid whose nonidentity elements are all nilpotent;  if $M/G$ admits a transversal which is a submonoid, then all these algebras are untwisted.  Note that such twisted contracted monoid algebras are local by the discussion in Section~\ref{s:nilpotents} and so the $e_{\chi}$ are still primitive in $\mathbb C_0M$ and  \eqref{eq:local.decomp.artin} is the canonical decomposition of the finite dimensional commutative algebra $\mathbb C_0M$ into a product of local rings.

\begin{Prop}\label{p:bass.analogy}
Let $M$ be a commutative monoid with zero $z$ and group of units $G$ such that $M\setminus G$ consists of nilpotent elements.  Then $Mm=Mm'$ if and only if $Gm=Gm'$.
\end{Prop}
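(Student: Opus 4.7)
The backward direction is immediate: if $Gm = Gm'$, write $m' = gm$ with $g\in G$, and then $Mm' = Mgm = Mm$, since $Mg = M$ for any unit $g$. So the content is in the forward direction.

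For the forward direction, the plan is to suppose $Mm = Mm'$ and write $m = am'$ and $m' = bm$ for some $a,b \in M$; combining these gives $m = (ab)m$ and (using commutativity) $m' = (ab)m'$. I would then split into two cases based on whether $ab$ is a unit. If $ab \in G$, the commutativity of $M$ forces $a,b \in G$ individually: having $(ab)c = 1$ implies $a(bc) = 1$, so $a$ has a two-sided inverse in a commutative monoid, and similarly for $b$. Then $m' = bm$ with $b \in G$ gives $Gm' = Gm$ directly.

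In the remaining case $ab \notin G$, the standing hypothesis that $M \setminus G$ consists of nilpotent elements gives $(ab)^n = z$ for some $n \geq 1$. Iterating the identity $m = (ab)m$ yields $m = (ab)^n m = zm = z$, and then $m' = bm = bz = z$ as well, so trivially $Gm = \{z\} = Gm'$.

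No step looks like a serious obstacle; the only subtlety is the standard observation that in a commutative monoid a product is a unit only if each factor is, which is what converts the dichotomy $ab \in G$ vs.\ $ab$ nilpotent into either the desired unit relating $m$ and $m'$, or the forced collapse to zero.
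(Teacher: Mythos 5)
Your proof is correct and follows essentially the same route as the paper: write $m=am'$, $m'=bm$, and split on whether the product $ab$ is a unit, using that a non-unit (hence nilpotent) factor forces $m=m'=z$. The only cosmetic difference is that the paper phrases the case split as ``$x$ and $y$ both units'' versus ``one of them is not,'' whereas you phrase it via your (correct) observation that in a commutative monoid $ab\in G$ forces $a,b\in G$.
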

\begin{proof}
The only non-trivial implication is showing that $Mm=Mm'$ implies $Gm=Gm'$.  Assume that $Mm=Mm'$ is the case.  Then $xm=m'$ and $ym'=m$ for some $x,y\in M$.  If $x$ and $y$ are units, there is nothing to prove.  Suppose that without loss of generality that $x$ is not a unit.  Then $x$ and hence $xy$ is nilpotent.  But $xym'=xm=m'$ and hence by nilpotence of $xy$, $z=zm'=(xy)^nm'=m'$ for $n$ large enough.  Similarly, $xym=yxm=ym'=m$ and so $m=z$.  Thus $Gm=Gm'$ as $m=z=m'$.
\end{proof}

 Fix once and for all representatives $m_1,\ldots, m_r$ of the orbits of $G$ on $M\setminus \{z\}$ and without loss of generality assume that $m_1=1$.
  If $m_im_j\neq z$, then define $f(i,j)$ by $m_im_j\in Gm_{f(i,j)}$.
  Let $G_i$ be the stabilizer in $G$  of $m_i$; since $G$ is commutative, $G_i$ is the pointwise stabilizer of the orbit $Gm_i$.

\begin{Prop}\label{p:char.ideal}
Let $\chi\in \wh G$ and $I_{\chi}$ consist of those elements of $m\in M$ whose stabilizers in $G$ are not contained in $\ker \chi$.  Then $I\chi$ is empty if $\chi$ is the trivial character $1_G$ and otherwise is a proper ideal of $M$.
\end{Prop}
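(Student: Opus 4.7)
The plan is to unpack the definition of $I_\chi$ directly and check each clause of the claim in turn, relying only on the elementary facts that the stabilizer of $m$ in $G$ is a subgroup (so the trivial character case is trivial) and that stabilizers grow under multiplication.

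First I would dispose of the trivial character: if $\chi = 1_G$, then $\ker\chi = G$, so every subgroup of $G$ is contained in $\ker\chi$. Since the stabilizer of any $m \in M$ is a subgroup of $G$, no element of $M$ can lie in $I_\chi$, hence $I_\chi = \emptyset$.

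Now assume $\chi \neq 1_G$, so $\ker\chi \lneq G$. I would verify the three required properties in the following order. \emph{Non-emptiness:} the zero $z$ has $gz = z$ for all $g \in G$, so $\mathrm{Stab}(z) = G \not\subseteq \ker\chi$, whence $z \in I_\chi$. \emph{Properness:} the identity $1$ has stabilizer $\{1\} \subseteq \ker\chi$, so $1 \notin I_\chi$ and $I_\chi \neq M$. \emph{Closure under multiplication by $M$:} let $m \in I_\chi$ and $n \in M$. If $gm = m$, then by commutativity $g(mn) = (gm)n = mn$, so $\mathrm{Stab}(m) \subseteq \mathrm{Stab}(mn)$. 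Since $\mathrm{Stab}(m) \not\subseteq \ker\chi$ by assumption, neither is $\mathrm{Stab}(mn)$, and therefore $mn \in I_\chi$. Combined with commutativity of $M$, this shows $I_\chi$ is a two-sided ideal.

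There is no real obstacle here; the argument is essentially a bookkeeping check once one notices that stabilizers only enlarge under multiplication. The nilpotence hypothesis on $M \setminus G$ and the monoid structure on $M/G$ are not needed for this particular proposition — they will be invoked later when identifying $e_\chi \mathbb C_0 M$ as a twisted contracted monoid algebra, presumably using the quotient $M/I_\chi$ or a related construction built from the ideal just defined.
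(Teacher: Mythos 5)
Your proof is correct and follows essentially the same route as the paper's: the trivial-character case from $\ker 1_G = G$, membership of $z$ via its full stabilizer, properness via the trivial stabilizer of $1$, and the ideal property from the observation that stabilizers only enlarge under multiplication. Your closing remark that the nilpotence hypothesis is not needed here is also accurate.
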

\begin{proof}
Since $\ker 1_G=G$, trivially $I_{1_G}=\emptyset$.  Since the stabilizer of $z$ is $G$, if $\chi$ is a non-trivial character, then $z\in I_{\chi}$.  Moreover, if $m\in I_{\chi}$ and $gm=m$ with $g\notin\ker \chi$, then for any $m'\in M$, we have $g(mm')=(gm)m'=mm'$ and so $g$ stabilizes $mm'$.  Thus $mm'\in I_{\chi}$ and so $I_{\chi}$ is an ideal.  It is proper, because the stabilizer of $1$ is trivial and hence $1\notin I_{\chi}$.
\end{proof}

For $\chi\in \wh G$, let us put
\[M_{\chi} = \begin{cases}M, & \text{if}\ \chi=1_G\\ M/I_{\chi},&  \text{else.}\end{cases}\]
Our goal is to compute the contracted semigroup determinant of $M$ in terms of paratrophic determinants of twisted contracted monoid algebras of the $M_{\chi}/G$.   Note that in $M_{\chi}/G$ every nonidentity element is nilpotent, i.e., $M_{\chi}/G$ is the result of adjoining an identity to a nilpotent commutative semigroup and so we are in the situation dealt with in Section~\ref{s:nilpotents}.

If $m\in M\setminus I_{\chi}$, we can put $\chi(m) = \chi(g)$ where $m=gm_i$ with $g\in G$.  This is well defined because if $m=hm_i$, then $h\inv g\in G_i\subseteq \ker \chi$. Note that this definition does depend on the choice of $m_i$, but we have fixed $m_1,\ldots, m_r$ once and for all. Since $m_1=1$, this definition of $\chi$ agrees with the original definition on $G$.  Let us fix the notation $J_{\chi} = \{i\mid G_i\subseteq \ker\chi\}$.

The first part of the following proposition is essentially Wood's ``good basis'' from~\cite{woodsemigroup}, generalized to our setting.

\begin{Prop}\label{p:good.basis}
Let $\chi\in \wh G$. Then the following hold.
\begin{enumerate}
\item A basis $B_{\chi}$ for $e_{\chi}\mathbb C_0M = e_{\chi}\mathbb C_0Me_{\chi}$ consists of the elements \[m_{i,\chi}=e_{\chi}m_i =\frac{1}{|Gm_i|}\sum_{m\in Gm_i}\overline{\chi(m})m\] with $i\in J_{\chi}$.
\item $e_{\chi}\mathbb C_0M\cong \mathbb C_0(M_{\chi}/G,c_{\chi})$ where $c_{\chi}(Gm_i,Gm_j)=\chi(m_im_j)$  if $m_im_j\notin I_{\chi}$.  Furthermore, if $m_im_j = m_{f(i,j)}$ whenever $m_im_j\notin I_{\chi}$, then $e_{\chi}\mathbb C_0M\cong \mathbb C_0[M_{\chi}/G]$.
\item $\til \theta_{M_{\chi}/G,c_{\chi}}=0$ unless $M_{\chi}/G$ has a unique annihilating element $Gm_{i_{\chi}}$, in which case $\til\theta_{M_{\chi}/G,c_{\chi}} = \det A(\chi)\cdot x_{Gm_{i_{\chi}}}^{|M_{\chi}/G|-1}$ where $A(\chi)$ is the $J_{\chi}\times J_{\chi}$-matrix with
\[A(\chi)_{ij} = \begin{cases} \chi(m_im_j), & \text{if}\ m_im_j\in Gm_{i_{\chi}}\\ 0, & \text{else}\end{cases}\] for $i,j\in J_{\chi}$.
\end{enumerate}
\end{Prop}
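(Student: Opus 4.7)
My plan is to handle the three parts of the proposition in order, with parts~(1) and~(2) relying on the basic calculation $ge_\chi=\chi(g)e_\chi=e_\chi g$ together with orbit/stabilizer arithmetic, and with part~(3) being an essentially immediate application of Theorem~\ref{t:nilvanish} once~(2) is in hand.

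For part~(1), I would first show that if $i\notin J_\chi$, then $e_\chi m_i=0$. Indeed, pick $h\in G_i\setminus \ker\chi$: then $e_\chi m_i=e_\chi hm_i=\chi(h)e_\chi m_i$, and since $\chi(h)\neq 1$ we get $e_\chi m_i=0$. If instead $i\in J_\chi$, then $\chi$ is constant on each coset of $G_i$, so collecting the sum $e_\chi m_i=\tfrac{1}{|G|}\sum_{g\in G}\overline{\chi(g)}gm_i$ by orbit elements produces the claimed formula $m_{i,\chi}=\tfrac{1}{|Gm_i|}\sum_{m\in Gm_i}\overline{\chi(m)}m$. Linear independence of these vectors is then immediate since their supports lie in distinct $G$-orbits. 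To confirm that they span $e_\chi\mathbb C_0M$, I would expand any element of $e_\chi\mathbb C_0M$ in the basis $M\setminus\{z\}$ of $\mathbb C_0M$, group terms by $G$-orbit, and observe that $e_\chi$ annihilates the span of each orbit $Gm_i$ with $i\notin J_\chi$ while projecting the span of each orbit $Gm_i$ with $i\in J_\chi$ onto the line $\mathbb C m_{i,\chi}$.

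For part~(2), I would define a linear map sending the basis element $m_{i,\chi}$ of $e_\chi\mathbb C_0M$ to the basis element $Gm_i$ of $\mathbb C_0(M_\chi/G,c_\chi)$ and check multiplicativity. The key computation is
\[m_{i,\chi}m_{j,\chi}=e_\chi^2 m_im_j=e_\chi m_im_j.\]
Three cases arise: if $m_im_j=z$ then the product is $0$; if $m_im_j\in I_\chi\setminus\{z\}$ then some stabilizer element $h$ of $m_im_j$ lies outside $\ker\chi$ and the same trick as in~(1) forces $e_\chi m_im_j=0$; otherwise $m_im_j=gm_{f(i,j)}$ for some $g\in G$ and
\[e_\chi m_im_j=\chi(g)e_\chi m_{f(i,j)}=\chi(m_im_j)\,m_{f(i,j),\chi},\]
which matches the multiplication $Gm_i\cdot Gm_j=c_\chi(Gm_i,Gm_j)Gm_{f(i,j)}$ in the twisted contracted monoid algebra. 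Summing these three cases over $J_\chi\times J_\chi$ gives the isomorphism. Under the additional hypothesis that the representatives can be chosen to be closed under multiplication whenever the product avoids $I_\chi$, we have $m_im_j=m_{f(i,j)}$, so $\chi(m_im_j)=\chi(1)=1$ and the twist is trivial. The mild obstacle here is keeping the bookkeeping correct: checking that $Gm_i\cdot Gm_j=z$ in $M_\chi/G$ exactly corresponds to $m_im_j\in I_\chi$ on the algebra side, which uses Proposition~\ref{p:char.ideal} (that $I_\chi$ is an ideal) and Proposition~\ref{p:bass.analogy} (that $G$-orbits are determined by principal ideals).

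For part~(3), since $M_\chi/G$ is a commutative monoid whose identity is $G\cdot 1$ and whose other elements are nilpotent (because they are images of nilpotent elements of $M$ under the quotient $M\to M/G$), it is of the form ``nilpotent semigroup with adjoined identity'' considered in Section~\ref{s:nilpotents}. I would then invoke Theorem~\ref{t:nilvanish} applied to the twisted contracted monoid algebra $\mathbb C_0(M_\chi/G,c_\chi)$: the determinant vanishes unless $M_\chi/G$ has a unique annihilating element $Gm_{i_\chi}$, in which case
\[\til\theta_{M_\chi/G,c_\chi}=\det A\cdot x_{Gm_{i_\chi}}^{|M_\chi/G|-1},\]
where the entry $A_{Gm_i,Gm_j}$ equals the twist $c_\chi(Gm_i,Gm_j)=\chi(m_im_j)$ exactly when $Gm_i\cdot Gm_j=Gm_{i_\chi}$, i.e., when $m_im_j\in Gm_{i_\chi}$, and is $0$ otherwise. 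This is the matrix $A(\chi)$ of the statement, and the exponent $|M_\chi/G|-1$ matches the cardinality of the nilpotent part of $M_\chi/G$ including its zero, giving the claimed formula.
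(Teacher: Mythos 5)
Your proposal is correct and follows essentially the same route as the paper: the orbit decomposition $\mathbb C_0M=\bigoplus_i\mathbb CGm_i$ for part (1), the computation $e_\chi m_ie_\chi m_j=e_\chi m_im_j$ split into the cases $m_im_j\in I_\chi$ versus $m_im_j=hm_{f(i,j)}$ for part (2), and a direct appeal to Theorem~\ref{t:nilvanish} for part (3). The only cosmetic difference is in part (1), where you determine the isotypic components of each $\mathbb CGm_i$ by hand from the relation $e_\chi g=\chi(g)e_\chi$, whereas the paper identifies $\mathbb CGm_i$ with $\mathbb C[G/G_i]$ and cites Frobenius reciprocity; the two arguments are interchangeable.
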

\begin{proof}
We have  that $\mathbb C_0M= \bigoplus_{i=1}^r \mathbb CGm_i$ as a left $\mathbb CG$-module.  Moreover, $\mathbb CGm_i\cong \mathbb C[G/G_i]$ and the latter is induced from the trivial representation of $G_i$.  Therefore, by Frobenius reciprocity, the multiplicity of $\chi\in \wh G$ in $\mathbb CGm_i$ is $1$ if $G_i\subseteq \ker \chi$, i.e., $i\in J_\chi$, and $0$,  otherwise.  Since $e_{\chi}\mathbb CGm_i = \mathbb CGe_{\chi}m_i$, we deduce that $e_{\chi}m_i$ is a basis for the one-dimensional isotypic component of $\chi$ in $\mathbb CGm_i$ when $i\in J_{\chi}$, and only such $i$ contribute to the isotypic component $e_{\chi}\mathbb C_0M$.  Thus $B_{\chi}$ is a basis for $e_{\chi}\mathbb C_0M$.  Let $T$ be a set of coset representatives for $G/G_i$.   We next compute, using $G_i\subseteq\ker \chi$ that
\begin{align*}
e_{\chi}m_i = \frac{1}{|G|}\sum_{g\in G}\overline{\chi(g)}gm_i &= \frac{1}{|G|}\sum_{h\in T}\overline{\chi(h)}h\sum_{u\in G_i}\overline{\chi(u)}um_i\\ & = \frac{|G_i|}{|G|}\sum_{h\in T}\overline{\chi(h)}hm_i \\ &= \frac{1}{|Gm_i|}\sum_{m\in Gm_i}\overline{\chi(m)}m.
\end{align*}

We next check that $e_{\chi}\mathbb C_0M\cong \mathbb C_0(M_{\chi}/G,c_{\chi})$  by showing that $m_{i,\chi}m_{j,\chi}$ is zero if $m_im_j\in I_{\chi}$ and is otherwise $\chi(m_im_j)m_{f(i,j),\chi}$.  The desired isomorphism will follow.   Clearly, $e_{\chi}m_ie_{\chi}m_j = e_{\chi}m_im_j$ and this will be zero unless $m_im_j\notin I_{\chi}$ by our previous discussion.  If $m_im_j\notin I_{\chi}$, then $m_im_j = hm_{f(i,j)}$ with $h\in G$, and so $e_{\chi}m_im_j =e_{\chi} hm_{f(i,j)} = \chi(h)e_{\chi}m_{f(i,j)} = \chi(m_im_j)m_{f(i,j),\chi}$, as required.  In the case that $m_im_j = m_{f(i,j)}$, then $h\in G_{f(i,j)}\subseteq \ker \chi$ and so $\chi(h)=1$.

The third item follows from Theorem~\ref{t:nilvanish} since $M_{\chi}/G$ is a nilpotent semigroup with adjoined identity.
This completes the proof.
\end{proof}

The special case of this theorem where there is a set of representatives for $M/G$ forming a submonoid, and so all the twists $c_{\chi}$ are trivial, is the content of~\cite[Lemma~7]{Wenger}.  Our result clarifies the general case.

Note that $\chi$ and $\overline{\chi}$ have the same kernel and hence $G_i\subseteq \ker \chi$ if and only if $G_i\subseteq \ker \ov{\chi}$, a fact that we shall exploit without comment.

\begin{Lemma}\label{l:messy.det}
Let $f\colon \mathbb C_0M\to \prod_{\chi\in \wh G} \mathbb C_0(M_{\chi},c_{\chi})$ be the isomorphism coming from Proposition~\ref{p:good.basis} and \eqref{eq:local.decomp.artin}, and let $P$ be the matrix of $f$ with respect to the bases $M\setminus \{z\}$ and the union of the $B_{\chi}$ from Proposition~\ref{p:good.basis} with $\chi\in \wh G$ (identifying $e_{\chi}\mathbb C_0M$ with $\mathbb C_0(M_{\chi},c_{\chi})$).  Then $\det (PP^T) = \pm\prod_{\chi\in \wh G}\prod_{i\in J_{\chi}}|Gm_i|$.
\end{Lemma}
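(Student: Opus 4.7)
The plan is to compute the matrix $P$ explicitly, recognise $PP^T$ as block-diagonal with blocks coming from the involution $\chi\mapsto\overline\chi$ on the character groups $\wh{G/G_i}$, and then use $|\wh{G/G_i}|=|Gm_i|$.

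For the entries of $P$, recall that $G$ is abelian, so $ge_\chi=\chi(g)e_\chi$. Hence for $m=hm_i\in Gm_i$ we have $e_\chi m=\chi(h)\,m_{i,\chi}$ when $i\in J_\chi$, and $e_\chi m=0$ otherwise (as $\sum_{u\in G_i}\overline{\chi(u)}=0$ when $\chi|_{G_i}$ is nontrivial). Since $f$ is the sum of the projections onto the $e_\chi\mathbb C_0M$, this gives $f(m)=\sum_{\chi:\,i\in J_\chi}\chi(m)\,m_{i,\chi}$, where $\chi(m):=\chi(h)$ is well-defined because $G_i\subseteq\ker\chi$. Therefore
\[P_{(j,\chi),\,m}=\begin{cases}\chi(m), & m\in Gm_j,\\ 0, & \text{otherwise,}\end{cases}\]
with rows indexed by pairs $(j,\chi)$ with $j\in J_\chi$ and columns by $m\in M\setminus\{z\}$. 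I would then compute $(PP^T)_{(i,\chi),(j,\psi)}=\sum_m P_{(i,\chi),m}P_{(j,\psi),m}$, which vanishes unless $Gm_i=Gm_j$, i.e.\ $i=j$, and otherwise equals $\sum_{h\in G/G_i}(\chi\psi)(h)$. Since $\chi$ and $\psi$ both descend to $G/G_i$, Schur orthogonality on this finite abelian group evaluates the sum to $|G/G_i|=|Gm_i|$ when $\psi=\overline\chi$, and to $0$ otherwise.

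Consequently $PP^T$ is block diagonal with one block per orbit index $i$, and the $i$-th block equals $|Gm_i|$ times the permutation matrix of the involution $\chi\mapsto\overline\chi$ acting on $J^{(i)}:=\{\chi\in\wh G:G_i\subseteq\ker\chi\}$. Under the natural identification $J^{(i)}\cong\wh{G/G_i}$ we have $|J^{(i)}|=|G/G_i|=|Gm_i|$, so the $i$-th block has determinant $\pm|Gm_i|^{|Gm_i|}$. Multiplying over $i$ and reindexing yields
\[\det(PP^T)=\pm\prod_{i=1}^r|Gm_i|^{|Gm_i|}=\pm\prod_{\chi\in\wh G}\prod_{i\in J_\chi}|Gm_i|,\]
as required. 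The only non-obvious point is that both characters descend to the same quotient in the inner sum, which is exactly what the row-indexing hypothesis $i\in J_\chi\cap J_\psi$ guarantees; after that, standard orthogonality does the work and the signs of the involutions are absorbed into the $\pm$.
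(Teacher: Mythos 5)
Your proof is correct and follows essentially the same route as the paper: compute the entries of $P$ from $e_{\chi}m=\chi(m)m_{i,\chi}$, observe that $(PP^T)_{(i,\chi),(j,\psi)}$ vanishes unless $i=j$, and apply character orthogonality (on $G$, or equivalently on $G/G_i$) to see that $PP^T$ is $|Gm_i|$ times the permutation matrix of $\chi\mapsto\overline\chi$ on each block. The paper phrases the last step as a row permutation making $PP^T$ diagonal rather than as a block-determinant computation, but this is the same argument.
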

\begin{proof}
Let $m\in Gm_i$.  If $i\in J_{\chi}$ and $m=gm_i$, then $e_{\chi}m = e_{\chi}gm_i = \chi(g)e_{\chi}m_i = \chi(m)m_{i,\chi}$.  Otherwise, $e_{\chi}m=0$.  Therefore, \[P_{m_{i,\chi},m} = \begin{cases}\chi(m), & \text{if}\ m\notin I_{\chi}\\ 0, & \text{else}\end{cases}\] and $P_{m_{j,\lambda},m}=0$ if $j\neq i$.   It follows that $PP^T_{m_{i,\chi}, m_{j,\lambda}}$ is zero unless $i=j$ and $G_i\subseteq \ker \chi\cap \ker\lambda$, in which case it is
\begin{equation}\label{eq:use.Fourier}
\sum_{m\in Gm_i} \chi(m)\lambda(m) = \frac{1}{|G_i|}\sum_{g\in G}\chi(g)\lambda(g)
\end{equation}
 (since each element $m\in Gm_i$ can be written as $gm_i$ in exactly $|G_i|$ ways and $\chi(m)=\chi(g)$, $\lambda(m)=\lambda(g)$ whenever $m=gm_i$).  But by the orthogonality relations for characters, the right hand side of \eqref{eq:use.Fourier} is zero unless $\chi =\overline{\lambda}$, in which case it is $|G|/|G_i|= |Gm_i|$.  If we permute the rows by switching $m_{i,\chi}$ and $m_{i,\ov\chi}$ for $i\in J_{\chi}$, we turn $PP^T$ into a diagonal matrix where the $m_{i,\chi}$ diagonal entry is $|Gm_i|$.  The result then follows.
\end{proof}

\begin{Rmk}\label{r:sign}
The sign in the above lemma can be computed as follows.  Let $D$ be a set of representatives of the complex conjugate pairs of non-real-valued characters of $G$.  Then the sign in $\det (PP^T)$ is $\prod_{\chi\in D}(-1)^{|J_{\chi}|}$.
\end{Rmk}

\begin{Thm}\label{t:local.like.case}
Let $M$ be a commutative monoid with zero and group of units $G$ such that $M\setminus G$ consists of nilpotent elements.  Then $\til\theta_M=0$ unless each $M_{\chi}/G$ has a unique annihilating element $Gm_{i_{\chi}}$, in which case \[\til\theta_M = \pm\prod_{\chi\in \wh G}\left(\prod_{i\in J_{\chi}}\frac{|Gm_i|}{|G|}\right)\det A(\chi)\cdot \left(\sum_{g\in G}\ov{\chi(g)}x_{gm_{i_{\chi}}}\right)^{|M_{\chi}/G|-1}\] where we have retained the previous notation (including that of Proposition~\ref{p:good.basis}).
\end{Thm}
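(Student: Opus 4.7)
The plan is to combine \eqref{eq:local.decomp.artin} and Proposition~\ref{p:good.basis} into a single algebra isomorphism, push the paratrophic determinant through it via Theorem~\ref{t:cvt}, evaluate each factor via Proposition~\ref{p:direct.prod} and Proposition~\ref{p:good.basis}(3), and pin down the Jacobian with Lemma~\ref{l:messy.det}. Precisely, I would let
\[
f\colon \mathbb C_0 M\xrightarrow{\cong}\prod_{\chi\in\wh G}\mathbb C_0(M_\chi/G,c_\chi)
\]
denote the isomorphism from \eqref{eq:local.decomp.artin} and Proposition~\ref{p:good.basis}(2), and take $P$ to be its matrix with respect to the bases $M\setminus\{z\}$ of the source and $\bigsqcup_\chi B_\chi$ of the target. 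Theorem~\ref{t:cvt} combined with Proposition~\ref{p:direct.prod} then yields
\[
\til\theta_M=\det(P)^2\prod_{\chi\in\wh G}\til f\inv\!\left(\til\theta_{M_\chi/G,c_\chi}\right).
\]

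By Proposition~\ref{p:good.basis}(3), the $\chi$-th factor vanishes identically unless $M_\chi/G$ has a unique annihilating element $Gm_{i_\chi}$, in which case it equals $\det A(\chi)\cdot x_{Gm_{i_\chi}}^{|M_\chi/G|-1}$. So if some $M_\chi/G$ lacks such an element then $\til\theta_M=0$; otherwise, Lemma~\ref{l:messy.det} supplies $\det(P)^2=\pm\prod_\chi\prod_{i\in J_\chi}|Gm_i|$ and the only remaining ingredient is $\til f\inv(x_{Gm_{i_\chi}})$. Since $f\inv(m_{i_\chi,\chi})=e_\chi m_{i_\chi}=\tfrac{1}{|Gm_{i_\chi}|}\sum_{m\in Gm_{i_\chi}}\overline{\chi(m)}\,m$ by Proposition~\ref{p:good.basis}(1), the prescription in Theorem~\ref{t:cvt} gives
\[
\til f\inv(x_{Gm_{i_\chi}})=\frac{1}{|Gm_{i_\chi}|}\sum_{m\in Gm_{i_\chi}}\overline{\chi(m)}\,x_m=\frac{1}{|G|}\sum_{g\in G}\overline{\chi(g)}\,x_{gm_{i_\chi}},
\]
where the second equality uses that $G_{i_\chi}\subseteq\ker\chi$ and that each $m\in Gm_{i_\chi}$ is the image of exactly $|G_{i_\chi}|=|G|/|Gm_{i_\chi}|$ elements of $G$.

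Finally, noting $|M_\chi/G|-1=|J_\chi|$, the $|J_\chi|$ copies of $1/|G|$ produced by raising the right-hand side to the exponent $|M_\chi/G|-1$ combine with the factor $\prod_{i\in J_\chi}|Gm_i|$ coming from Lemma~\ref{l:messy.det} to yield the coefficient $\prod_{i\in J_\chi}|Gm_i|/|G|$, exactly as stated; Remark~\ref{r:sign} handles the sign. The only obstacle I anticipate is purely bookkeeping: making sure the $1/|G|$ from the inverse Fourier formula, the exponent $|J_\chi|$ of each linear factor, and the normalizers from Lemma~\ref{l:messy.det} combine in precisely the way claimed. No additional algebraic content beyond these ingredients should be required.
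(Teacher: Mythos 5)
Your proposal is correct and follows exactly the paper's own argument: the paper's proof is precisely the combination of Proposition~\ref{p:good.basis}, Proposition~\ref{p:direct.prod}, Theorem~\ref{t:cvt} (with the same identification of $\til f\inv(x_{Gm_{i_\chi}})$ via $e_\chi m_{i_\chi}$), Lemma~\ref{l:messy.det}, and the identity $|J_\chi|=|M_\chi/G|-1$. The bookkeeping of the $1/|G|$ factors against $\det(P)^2$ works out exactly as you describe.
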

\begin{proof}
This follows from Proposition~\ref{p:good.basis}, Proposition~\ref{p:direct.prod}, Theorem~\ref{t:cvt} (because $\frac{1}{|G|}\sum_{g\in G}\overline{\chi(g)}gm_{i_{\chi}}$ is the preimage of $m_{i_{\chi},\chi}$ under the isomorphism in Proposition~\ref{p:good.basis}), Lemma~\ref{l:messy.det} and since $|J_{\chi}| = |M_{\chi}/G|-1$.
\end{proof}

Let us give two illustrative examples of Theorem~\ref{t:local.like.case}.  Let $M$ be the commutative monoid with zero $z$ with generators $a,r,s,t$ and  relations $a^2=1$, $rt=s^2=z$, $r^2=rs=ast=at^2$ and  $w(r,s,t)=z$ whenever $w$ is a word of length at least $3$. Put $z'=r^2$. Then \[M=\{1,a,r,ar,s,as,t,at,z',az',z\}\] is an $11$-element monoid with group of units $G=\{1,a\}$ and $M\setminus G$ is $3$-nilpotent.  Moreover, the stabilizer in $G$ of every nonzero element is trivial.   Then $Gz'$ is the unique annihilating element of $M/G$.  Note that $M/G$ is isomorphic to the monoid in Proposition~\ref{p:3nil} with \[B=\begin{bmatrix} 1 & 1& 0 \\ 1 &0&1\\ 0&1&1\end{bmatrix}.\]  In particular, $\det B=-2$ and so $\mathbb C_0[M/G]$ is Frobenius by Proposition~\ref{p:3nil}.  But we shall see that $\mathbb C_0M$ is not.
Let $1_G$ be the trivial character of $G$ and $\chi$ the non-trivial character.  Note that $M_{1_G}=M=M_{\chi}$, as all nonzero elements have trivial stabilizers.  Let us take $1,r,s,t,z'$ as our set of orbit representatives for $(M\setminus \{z\})/G$. Note that $\chi(z')=1$ and $\chi(az')=-1$.    Therefore,
\[A(1_G) = \begin{bmatrix} 0& 0& 0& 0 & 1\\ 0& 1 & 1& 0&0  \\ 0&1 &0&1&0\\ 0&0&1&1&0\\ 1& 0&0&0&0\end{bmatrix},\qquad   A(\chi) = \begin{bmatrix} 0&0&0&0&1\\ 0& 1 & 1& 0 &0 \\ 0& 1 &0&-1&0\\ 0& 0&-1&-1&0\\ 1&0&0&0&0\end{bmatrix}.\]  In particular, $\det A(\chi)=0$ and so $\til\theta_M=0$ by Theorem~\ref{t:local.like.case}.  Therefore, $\mathbb C_0M$ is not Frobenius.  Incidentally, this gives an example where the converse
of~\cite[Chapter~16, Theorem~25]{oknisemigroupalgebra} fails over the complex numbers.

Our next example is from Wenger~\cite{Wenger} and was given as an example where~\cite[Lemma~7]{Wenger} does not apply, but the algebra is still Frobenius.  He showed the algebra is Frobenius by an ad hoc substitution of the variables in the contracted semigroup determinant. Here, we use Theorem~\ref{t:local.like.case} to give the exact factorization of the contracted semigroup determinant.  Let $M$ be the commutative monoid with zero $z$ with generators $a,r,s$ and defining relations $a^2=1$, $rs=r^3=s^3=z$ and $r^2=as^2$.  It is easy to see that $G=\{1,a\}$ is the group of units of $M$, that $M\setminus G$ is $3$-nilpotent and that if we put $z'=r^2$, then $M = \{1,a,r,ar,s,as,z',az',z\}$. Moreover, $1,r,s,z'$ form a set of representatives for $(M\setminus \{z\})/G$ and $Gz'$ is the unique annihilating element of $M/G$.  The stabilizer in $G$ of each nonzero element of $M$ is trivial.  Note that $r^2=(ar)^2=z'$ and $s^2=(as)^2=az'$, and so there is no way to choose a set of orbit representatives that form a submonoid.  Let $1_G$ be the trivial character of $G$ and $\chi$ the nontrivial character.  Again, we have $\chi(z')=1$ and $\chi(az')=-1$.  We then have
\[A(1_G) = \begin{bmatrix}  0& 0& 0 & 1\\ 0& 1 & 0& 0  \\ 0& 0&1&0\\ 1 & 0&0&0\end{bmatrix},\qquad   A(\chi) = \begin{bmatrix}  0& 0& 0 & 1\\ 0& 1 & 0& 0  \\ 0& 0&-1&0\\ 1 & 0&0&0\end{bmatrix}\] and so $\det A(1_G) = -1$, $\det A(\chi) = 1$, which implies that $\mathbb C_0M$ is Frobenius.   Theorem~\ref{t:local.like.case} shows that $\til \theta_M = \pm (x_{z'}+x_{az'})^4(x_{z'}-x_{az'})^4$ as each orbit of $G$ on $M\setminus \{z\}$ has size $2$ (actually the sign is negative by Remark~\ref{r:sign} since all the characters of $G$ are real-valued).

We now specialize Theorem~\ref{t:local.like.case} to the case where the ideal of nonunits of $M$ is principal.  This includes the multiplicative monoid of a chain ring and so our result generalizes that of Wood~\cite{woodsemigroup}.

\begin{Cor}\label{c:woodlike}
Let $M$ be a commutative monoid with zero $z$  and group of units $G$ such that the ideal $M\setminus G$  of nonunits is principal and generated by a nilpotent element $t$.  Let $G_i$ be the stabilizer of $t^i$ in $G$ for $0\leq i\leq k$ where $k\geq 0$ is largest with $t^k\neq z$.    For $\chi\in \wh G$, let $i_{\chi}$ be the largest index $i$ with $G_i\subseteq \ker \chi$.
Then the ideals of $M$ are the $Mt^i$ with $0\leq i\leq k+1$, which form a chain, and
\[\til\theta_M = \pm\prod_{\chi\in \wh G}\left(\prod_{0\leq i\leq i_{\chi}}\dfrac{1}{|G_i|}\right)\left(\sum_{g\in G}\overline {\chi(g)}x_{gt^{i_{\chi}}}\right)^{i_{\chi}+1}.\]
\end{Cor}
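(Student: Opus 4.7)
The plan is to specialize Theorem~\ref{t:local.like.case} to this principal-ideal setting by making each ingredient explicit.

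First I would describe the structure of $M$. Since $M\setminus G = Mt$ and $t^{k+1}=z$, a short induction shows that every element of $Mt^i\setminus Mt^{i+1}$ lies in $Gt^i$, giving the orbit decomposition $M\setminus\{z\} = G \sqcup Gt \sqcup \cdots \sqcup Gt^k$. Any ideal of $M$ is closed under multiplication by $t$ and is a union of $G$-orbits, so the ideals are exactly $Mt^0 \supseteq Mt^1 \supseteq \cdots \supseteq Mt^{k+1} = \{z\}$, which is the claimed chain. I would take $m_{i+1}=t^i$ (for $0\leq i\leq k$) as the orbit representatives in the paper's indexing.

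Next, if $gt^i = t^i$ then $gt^{i+1}=t^{i+1}$, so $G_0\subseteq G_1\subseteq\cdots\subseteq G_k$, and the stabilizer of any $gt^i$ equals $G_i$. Hence for each $\chi\in\wh G$ the set $J_\chi=\{j : G_j\subseteq\ker\chi\}$ is an initial segment $\{0,1,\ldots,i_\chi\}$, and $I_\chi = \bigcup_{j>i_\chi}Gt^j\cup\{z\} = Mt^{i_\chi+1}$. Consequently $M_\chi/G = \{0, G[1], G[t], \ldots, G[t^{i_\chi}]\}$ with multiplication $G[t^a]\cdot G[t^b] = G[t^{a+b}]$ when $a+b\leq i_\chi$ and $0$ otherwise, so $Gt^{i_\chi}$ is the unique annihilating element and the vanishing alternative in Theorem~\ref{t:local.like.case} never occurs.

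Finally I would compute $A(\chi)$ and the numerical prefactors. With the choice $m_{i+1}=t^i$, the convention $\chi(m)=\chi(g)$ for $m=gm_{i+1}$ yields $\chi(t^i) = \chi(1) = 1$ whenever $t^i\notin I_\chi$. Therefore $A(\chi)$ is the $(i_\chi+1)\times(i_\chi+1)$ antidiagonal $0/1$-matrix, with determinant $(-1)^{\binom{i_\chi+1}{2}} = \pm 1$. Since $|Gt^i| = |G|/|G_i|$, we obtain $\prod_{i\in J_\chi}|Gm_i|/|G| = \prod_{i=0}^{i_\chi} 1/|G_i|$, and $|M_\chi/G|-1 = i_\chi+1$. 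Substituting into Theorem~\ref{t:local.like.case} absorbs $\det A(\chi)$ into the overall sign and produces the stated factorization.

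The main obstacle is really just careful bookkeeping in the translation of the paper's indexing to the chain setting. The clean observation that makes the answer transparent is that, with orbit representatives $t^i$, every defined value $\chi(t^i)$ equals $1$; this collapses $A(\chi)$ to a pure sign and eliminates any residual character values from the final formula.
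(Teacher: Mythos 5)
Your proposal is correct and follows essentially the same route as the paper: decompose $M\setminus\{z\}$ into the orbits $Gt^i$, identify the ideals as the chain $Mt^i$, observe $G_0\subseteq\cdots\subseteq G_k$ so that $J_\chi=\{0,\ldots,i_\chi\}$ and $M_\chi/G$ is a cyclic nilpotent semigroup with adjoined identity, and then feed this into Theorem~\ref{t:local.like.case}. The only cosmetic difference is that you compute $\det A(\chi)=\pm 1$ directly from the antidiagonal form (using that $\chi(t^i)=1$ on the submonoid transversal, so the twist is trivial), whereas the paper cites Proposition~\ref{p:cyclic.nilpotent}, whose proof is that same computation.
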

\begin{proof}
Notice that $M\setminus G$ consists of nilpotent elements because $t$ is nilpotent. Also, since $t^{k+1}=z$ and $t^k\neq z$, we have $Mt^{i}\neq Mt^{i+1}$ for $0\leq i\leq k$.
First observe that each element  $m\in M\setminus \{z\}$ can be written in the form $m=gt^i$ with $g\in G$ and $0\leq i\leq k$. The claim is trivial if $m\in G$.  Else, since $M\setminus G = Mt$ and $m\neq z$, there exists $1\leq i\leq k$ with $m\in Mt^i\setminus Mt^{i+1}$.  Write $m=at^i$ with $a\in M$.  If $a\in G$, we are done.  Else $a\in M\setminus G=Mt$ and so $a=bt$ with $b\in M$.  But then $m=bt^{i+1}$, a contradiction.    It follows that $1,t,\ldots, t^k,z$ is a transversal to $M/G$ which is a submonoid and that the $Mt^i$ with $0\leq i\leq k+1$ are all the ideals of $M$.   Clearly, $G_0\subseteq G_1\subseteq\cdots \subseteq G_k$.   Also, $M/G$ is  isomorphic to the cyclic monoid $N=\langle t\rangle = \{1, t,\ldots, t^k,t^{k+1}=z\}$.  Note that if $\chi\in \wh G$, then $M_{\chi} = M/Mt^{i_{\chi}+1}$ and hence $M_{\chi}/G$ is a nilpotent cyclic semigroup of nilpotency index $i_{\chi}+1$ with an adjoined identity.  Moreover, $e_{\chi}\mathbb CM$ is just the contracted semigroup algebra of $M_{\chi}/G$ by Proposition~\ref{p:good.basis}, since our transversal to the orbits is a submonoid.  The result now follows from Theorem~\ref{t:local.like.case}, Proposition~\ref{p:cyclic.nilpotent} and the observation that $|Gt^i|/|G| = 1/|G_i|$.
\end{proof}

Notice that  if the ideals of a monoid form a chain, then each ideal is clearly principal.  So the hypotheses of Corollary~\ref{c:woodlike} could alternatively be stated as $M$ is a commutative monoid with zero such that all nonunits are nilpotent and the ideals of $M$ form a chain.

We now arrive at the following computation of the Dedekind-Frobenius determinant of a finite commutative semigroup, generalizing Dedekind's theorem for abelian groups, the Wilf-Lindstr\"om theorem for semilattices, Wood's theorem for commutative chain rings  and Theorem~\ref{t:comm.case}.  In order to state our theorem, we unfortunately will need to introduce some cumbersome notation.  Suppose that $S$ is a finite commutative semigroup with $S^2=S$. For each idempotent $e$, we fix a set $T_e$ of representatives of the orbits of $G_e$ on $\til H_e$ containing $e$.  If $\chi\in \wh G_e$, then $T_{e,\chi}$ will denote the set of elements of $T_e$ whose stabilizer in $G_e$ is contained in $\ker \chi$.  If $(\til H_e^0/I_{\chi})/G_e$ has a unique annihilating element, we denote the representative of that orbit in $T_{e,\chi}$ by $s_{\chi}$.  If this is the case, we define $A(\chi)$ to be the $T_{e,\chi}\times T_{e,\chi}$-matrix with
\[A(\chi)_{s,t} = \begin{cases}\chi(g), & \text{if}\ st=gs_{\chi}\\ 0, & \text{else.}\end{cases}\] This is well defined, i.e., doesn't depend on the choice of $g$.

\begin{Thm}\label{t:comm.case.final}
Let $S$ be a finite commutative semigroup.  If $S^2\neq S$, then $\theta_S=0$. If $S^2=S$, then we retain the notation $T_{e,\chi}$, $A(\chi)$ and $s_{\chi}$ from above for $\chi\in \wh G_e$ with $e\in E(S)$.
In order for $\theta_S$ not to be identically zero, for each $e\in E(S)$ and each $\chi\in \wh{G_e}$, the element $s_{\chi}$ should exist, i.e., $(\til H_e^0/I_{\chi})/G_e$ should have a unique annihilating element.  
Then we have
\begin{align*}
\theta_S =\pm\prod_{e\in E(S)}\prod_{\chi\in \wh {G_e}}\left(\prod_{s\in T_{e,\chi}}\frac{|G_es|}{|G_e|}\right)& \det A(\chi) \\ & \cdot \left(\sum_{g\in G_e}\ov{\chi(g)}\sum_{t\leq gs_\chi}\mu_S(t,gs_{\chi})x_t\right)^{|T_{e,\chi}|}
\end{align*}
 with $\mu_S$ the M\"obius function of $S$ with respect to the partial order $s\leq t$ if $s\in tE(S)$.
\end{Thm}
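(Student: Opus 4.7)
The strategy is to assemble the two main reduction theorems from the preceding subsections into a single factorization. If $S^2\neq S$, Corollary \ref{c:squared.algebra} immediately gives $\theta_S=0$, so assume from now on that $S^2=S$. Since $S$ is commutative, every idempotent is central, so Theorem \ref{t:ze.case} applies and yields
\[
\theta_S = \prod_{e\in E(S)}\til\theta_{\til H_e^0}(Y_e),
\]
where $Y_e=\{y_s\mid s\in \til H_e\}$ and $y_s=\sum_{t\leq s}\mu_S(t,s)x_t$. By Proposition \ref{p:is.local}, each monoid $\til H_e^0$ is a commutative monoid with zero whose group of units is $G_e$ and in which every nonunit is nilpotent, so Theorem \ref{t:local.like.case} applies to each factor $\til\theta_{\til H_e^0}$ separately.

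The remainder of the proof is a translation of the conclusion of Theorem \ref{t:local.like.case}, applied to $M=\til H_e^0$ with $G=G_e$, into the notation of the current statement. The orbit representatives $m_1,\dots,m_r$ of Proposition \ref{p:good.basis} are chosen to be the set $T_e$; the index set $J_\chi=\{i\mid G_i\subseteq \ker \chi\}$ is identified with $T_{e,\chi}$; and the unique annihilating element $G_em_{i_\chi}$ of $M_\chi/G_e$, when it exists, is $G_es_\chi$. With these identifications the matrix $A(\chi)$ in Proposition \ref{p:good.basis}(3) agrees entry-for-entry with the one defined in this theorem, because the extension of $\chi$ to $M\setminus I_\chi$ satisfies $\chi(gs_\chi)=\chi(g)$ on the orbit $G_es_\chi$; and the exponent $|M_\chi/G|-1$ coincides with $|T_{e,\chi}|$. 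Substituting the resulting expression for $\til\theta_{\til H_e^0}$ into the product above, and replacing each $x_{gs_\chi}$ by $y_{gs_\chi}=\sum_{t\leq gs_\chi}\mu_S(t,gs_\chi)x_t$ (as prescribed by the change of variables in Theorem \ref{t:ze.case}), produces the claimed formula.

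The main bookkeeping obstacle is to verify that the vanishing criterion carries through cleanly: the product arising from Theorem \ref{t:ze.case} vanishes if and only if some factor $\til\theta_{\til H_e^0}$ does, and Theorem \ref{t:nilvanish} (via Theorem \ref{t:local.like.case}) then characterizes this exactly as the failure of unique annihilating elements in some $(\til H_e^0/I_\chi)/G_e$. The overall sign is absorbed into the $\pm$ already present on the right-hand side, so no delicate sign tracking is needed beyond what has already been done in Lemma \ref{l:messy.det} and the preceding theorems; the only substantive point is to confirm that the $y$-variables attached to the orbit $G_es_\chi$ in the current statement are precisely the images of the generators of the socle of the corresponding local direct factor under the composite isomorphism $\mathbb{C}S\to \prod_e\mathbb{C}_0\til H_e^0\to \prod_{e,\chi}\mathbb{C}_0(M_\chi/G_e,c_\chi)$.
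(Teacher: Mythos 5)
Your proposal is correct and follows exactly the paper's own argument: dispose of the $S^2\neq S$ case via Corollary~\ref{c:squared.algebra}, then combine Theorem~\ref{t:ze.case} with Theorem~\ref{t:local.like.case} applied to each $\til H_e^0$ (justified by Proposition~\ref{p:is.local}), translating notation and substituting the M\"obius-transformed variables $y_s$. The notational bookkeeping you spell out is precisely what the paper leaves implicit.
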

\begin{proof}
According to Corollary~\ref{c:squared.algebra}, if $S^2\neq S$, then $\theta_S=0$.  Assuming $S^2=S$, the result follows by putting together Theorems~\ref{t:ze.case} and~\ref{t:local.like.case}.
\end{proof}

Theorem~\ref{t:comm.case.final} shows that the semigroup determinant of a finite commutative semigroup $S$ is either identically zero, or it factors into linear factors which are determined by the characters of the maximal subgroups of $S$ (or, equivalently, the characters of $S$) and the M\"obius function of $S$.  These factors appear in general with multiplicities.  Note that the sign appearing in Theorem~\ref{t:comm.case.final} is easy to compute using Remark~\ref{r:sign}.  Also observe that $\prod_{\chi\in \wh{G_e}}\det A(\chi)$ is an integer since the entries of $A(\chi)$ are $|G_e|^{th}$-roots of unity, and the Galois group of the corresponding cyclotomic field extension permutes the matrices $A(\chi)$ according to the action of the Galois group on $\wh G_e$ and hence fixes $\prod_{\chi\in \wh{G_e}}\det A(\chi)$, which is then a rational algebraic integer.

\begin{Rmk}
Note that Theorem~\ref{t:central.idems}, Proposition~\ref{p:good.basis} and  \eqref{eq:local.decomp.artin} imply that if $S$ is a finite commutative semigroup with $S^2=S$, then $\mathbb CS$ is isomorphic to a finite direct product of local rings that are twisted contracted monoid algebras of commutative nilpotent semigroups with adjoined identity.
\end{Rmk}

\subsection*{Acknowledgments}
The author thanks Jay Wood for a number of helpful discussions.

\def\malce{\mathbin{\hbox{$\bigcirc$\rlap{\kern-7.75pt\raise0,50pt\hbox{${\tt
  m}$}}}}}\def\cprime{$'$} \def\cprime{$'$} \def\cprime{$'$} \def\cprime{$'$}
  \def\cprime{$'$} \def\cprime{$'$} \def\cprime{$'$} \def\cprime{$'$}
  \def\cprime{$'$} \def\cprime{$'$}

\end{document}